\newtheorem{theorem}{Theorem}[section]
\newtheorem{corollary}[theorem]{Corollary}
\newtheorem{lemma}[theorem]{Lemma}
\newtheorem{proposition}[theorem]{Proposition}
\theoremstyle{definition}
\newtheorem{definition}[theorem]{Definition}
\theoremstyle{remark}
\newtheorem{remark}[theorem]{Remark}
\def\sgn{\mathrm{sgn\, }}
\def\RR{\mathbb{R}}
\def\ZZ{\mathbb{Z}}
\def\EE{\mathbb{E}}
\def\wt{\widetilde}
\def\cB{{\mathcal B}}
\def\cF  {{\mathcal F}}
\def\si{{\sigma}}
\def\cL{{\mathcal L}}
\def\al{{\alpha}}
\def\si{{\sigma}}
\def \eref#1{\hbox{(\ref{#1})}}
\def \eref#1{\hbox{(\ref{#1})}}
\def\si{{\sigma}}
\def\al{{\alpha}}
\def\wt{\widetilde}
\def\cB{{\mathcal B}}
\def\cF  {{\mathcal F}}
\def\si{{\sigma}}
\def\cL{{\mathcal L}}
\def\al{{\alpha}}
\def\si{{\sigma}}
\def \eref#1{\hbox{(\ref{#1})}}
\def\om{{\omega}}
\def\XXint#1#2#3{{\setbox0=\hbox{$#1{#2#3}{\int}$ }
\vcenter{\hbox{$#2#3$ }}\kern-.6\wd0}}
\begin{document}

\title{Stochastic differential equation for Brox diffusion}
\author[Y. Hu]{Yaozhong Hu}
\thanks{Y. Hu is partially supported by a grant from the Simons
Foundation \#209206 and a General Research Fund of University of
Kansas.}
\author[K. L\^e]{Khoa L\^e}
\address{Department of Mathematics, The University of Kansas, Lawrence, Kansas, 66045, USA}
\email{yhu@ku.edu, khoale@ku.edu}
\author[L. Mytnik]{Leonid Mytnik}
\thanks{L. Mytnik is partially supported by a grant from the Israel Science Foundation.}
\address{ Industrial Engineering and Management, Technion - Israel Institute of Technology
Technion City, Haifa 32000, Israel}
\email{leonid@ie.technion.ac.il}
\keywords{ Random   environment,  Brox diffusion,
 white noise drift,  weak
solution, strong solution, uniqueness,  local time,  It\^o formula.}

\date{}

\begin{abstract}
This paper studies the  weak and  strong solutions   to the
stochastic differential equation $ dX(t)=-\frac12 \dot
W(X(t))dt+d\mathcal{B}(t)$,  where $(\mathcal{B}(t), t\ge 0)$ is a standard Brownian
motion and $W(x)$ is a two sided Brownian motion,  independent of
$\mathcal{B}$.  It is shown  that the It\^o-McKean representation associated with  any Brownian
motion (independent of $W$) is a weak solution to the
above equation. It is also shown that there exists a unique strong solution to the equation. It\^o calculus for the solution is developed. For dealing with the singularity of drift term
$\int_0^T \dot W(X(t))dt$, the main idea is to use the concept of local time
together with the  polygonal approximation
  $W_\pi$.  Some new results   on the local time of Brownian
motion needed in our proof are  established.
\end{abstract}
\maketitle
\setcounter{tocdepth}{1}
\tableofcontents

\section{Introduction}
Ever since the work of Sinai \cite{Sinai}
on the random walk in
random medium
there has been a great amount of work on random
processes  in a random environment.   One of the continuous time and continuous space
analogues of Sinai's random walk  is the Brownian motion in a white noise
medium, namely, the Brox diffusion,  which   can be described briefly
as follows.  Let $(\mathcal{B}(t), t\ge 0)$ be a  one dimensional  standard
Brownian motion and let $(W(x)\,, x\in \RR)$ be a two sided  one
dimensional  Brownian motion, independent of
$\mathcal{B}$.   Its   derivative   $\dot W(x)$  with respect to $x$ in the sense of Schwartz distribution is called  the white noise (see \cite{HKPS}).
The Brox diffusion is a diffusion process $X(t)$
determined formally by the following stochastic differential
equation
\begin{equation}
X(t)=-\frac12 \int_0^t  \dot W(X(s))ds+\mathcal{B}(t)\,. \label{equation}
\end{equation}
Throughout the paper, we assume the initial condition
 $X(0)=0$ for simplicity.  Since $\dot W$ is a distribution  (generalized function),
the conventional theory of stochastic differential equations does
not apply to the above equation  \eref{equation}.

In the case $W$ is nice (for example, $\dot W(x)$ is  deterministic and
globally Lipschitz continuous),  then the solution $X(t)$  to \eref{equation} exists
uniquely and it  is a Markov process
   with generator
\begin{equation}
A =\frac12 e^{W(x)}\frac{d}{dx} \left(e^{-W(x)} \frac{d}{dx}\right)\,.
\end{equation}
In \cite{Bro}, the process $X(t)$ defined (formally) by \eref{equation} is identified as a Feller
diffusion  with the above generator $A$.  The It\^o-McKean's construction of this Feller diffusion
from a Brownian motion via scale-transformation and time change is particularly used there.
%
%
Let us    briefly recall this construction.  
Let $B$ be a Brownian motion defined on a probability space $(\Omega, \mathcal{F},\mathcal{F}_t, P)$,
independent of $(W(x), x\in\RR)$
(Note that, if it is not stated otherwise,
we assume throughout the paper that
   $(W(x), x\in\RR)$ is a two sided Brownian motion).  We define the spatial transformation
\begin{equation}
S_W(x)=\int_0^x e^{W(z) } dz,\ \label{sw}
\end{equation}
and the time change
\begin{equation}
T_{W, B}(t)=\int_0^t e^{-2W\circ S_W^{-1}(B(s) )} ds\,.\label{twb}
\end{equation}
Then, the Feller diffusion $(X(t)\,, t\ge 0)$ associated with  \eref{equation} is  represented as
\begin{equation}
X(t)=S_W^{-1} \circ B\circ T_{W, B}^{-1}(t)\,, \quad 0\le
t<\infty\,. \label{sol-rep}
\end{equation}
We shall call \eref{sol-rep} the It\^o-McKean representation of the Feller diffusion. With this representation Th. Brox (in \cite{Bro}) studied the limit of the  scaled  process $\al^{-2} X(e^\al)$ (and the limit of the form $\al^{-2} X(e^{\al h(\al)} )$,  where $h(\al)\rightarrow 1$)    as $\al\rightarrow \infty$.

After this  work of Brox (\cite{Bro}) there have been a number of papers
devoted to the study of the process $X(t)$ defined by \eref{sol-rep}.
Let us only mention the papers \cites{AD,Di,Shi} where the local time of
$X(t)$ is studied. Some  ideas in these papers  will be used later. Let us also mention that about the same time as \cite{Bro} the  process $X(t)$ was also  studied in the paper
\cite{Sch}.

It may be    interesting to note that  if $W$ were continuously differentiable, it could be easily checked by It\^o's calculus that such an $X$ defined by \eref{sol-rep} is a weak solution to \eref{equation}  (see Remark~\ref{rem.32} (i) in Section~\ref{sec:prelim}).

By definition a diffusion is a Markov process with continuous sample paths. Probabilists are
interested in more detailed properties of the sample paths.
By fixing an almost sure realization of two-sided Brownian motion $W$,
the equation \eref{equation} can be considered as a
stochastic differential equation with singular
drift in the form
 \begin{equation}\label{eqn.russo}
        X_t=X_0+\int_0^t \sigma(X_s)d\mathcal{B}(s)+\int_0^t b'(X_s)ds,
    \end{equation}
    where $\mathcal{B}$ is a Brownian motion,  and $\sigma$ and $b$ are continuous function.
In fact,  there
have been already a number of work on such (one dimensional)
 equations (see e.g. \cite{basschen},
\cite{rf1}, \cite{rf2}, \cite{russo-trutnau},
and the references
therein).  In some cases strong existence and  uniqueness has been proved for such equations.
In  the case   $\sigma\equiv 1$ (which, in fact, is the situation
in~\eref{equation})  if   $b$ is H\"older continuous of order $\alpha$ for some $\alpha>1/2$,
then  the existence and  uniqueness of the strong solution   to~\eref{eqn.russo}  were
derived   in   \cite{basschen}. Under similar conditions, these results have been
also proved in~\cite{russo-trutnau}. 
However, it seems that in the case of the function $b$ being
less regular than H\"older of order $1/2$,
the  representation for $X$ which is known is via solution of certain martingale problem,  or
time change analogous to \eref{sol-rep} or via weak solution to
\eref{eqn.russo}, where the last term on the right hand side of the equation is defined as an
extension of a certain map (see e.g. Corollary~3.4 and Remark~3.5 in~\cite{rf1} or
Corollary~5.13 and Remark~5.14 in~\cite{russo-trutnau}.)
We would like to mention
that existence and uniqueness of the
strong solution to \eref{eqn.russo} has been also obtained in \cite{russo-trutnau}
under some technical assumption $\mathcal{A}(\nu_0)$ (see \cite{russo-trutnau}*{pg. 2229}). It is not clear whether this technical assumption can be verified for the equation~\eref{equation} which corresponds to~\eref{eqn.russo} with $\sigma=1$ and $b'=-\frac{1}{2}\dot W$.

The current paper offers the following contributions:  First,  we show that for any Brownian motion $B$, independent of $W$,  the It\^o-McKean representation  (\ref{sol-rep}) is a weak solution of the
equation~\eref{equation};  second,  for any given Brownian motion
$\cB$  we construct a particular Brownian motion $B$, independent of $W$, such that
the It\^o-McKean representation  (\ref{sol-rep}) is a strong  solution of the
equation~\eref{equation};  third, we show  
the strong uniqueness 
of the solution; 
and finally, we develop an It\^o calculus for the solution.  Note that the regularity of the generalized drift $b'=-\frac{1}{2} \dot W$ (where $W$ is H\"older
continuous with exponent $\alpha$, for any $\alpha$ less than $1/2$) is
at the border of what the papers mentioned above handled to show that  $X$ is a solution
of the stochastic differential equation with generalized drift.  While proving our results,
a major task for us   is to give a meaning to the integral 
$\int_0^t  \dot W(X(s))\,ds$ appearing in \eqref{equation}
and its approximations.  We shall complete this task by  using  the local time 
of a Brownian  motion and the following identity:  
\[
\int_0^t \dot W(X(s)) ds =\int_\RR e^{-W(x)} L_B(\xi, S_W(x))
W(dx)\Big|_{\xi=T_{W,B} ^{-1}(t)}\,.
\]
[See \eqref{key-identity} in the next section.]\ However, 
due to the lack of martingale property  of $L_B(\xi, y)$ on $\xi$,
we need to use Garsia-Rodemich-Rumsey theorem in order to give a meaning to the above object. This in turn forces 
us to study the higher moment properties of the local time of Brownian
motion, which has its own interest.  Let us also point out that 
our approach is probabilistic and we crucially  use the fact that $W$ is a Brownian motion.
In comparison with the results obtained in the  aforementioned papers,
the other results can be applied to (almost) every
sample path of $W$, but need to assume that $W$ has a H\"older continuity higher than $1/2$,
which cannot be verified by a Brownian motion. Our result can be applied to   Brownian motion
but is not for every sample path.


\textbf{Notations:}
Throughout
the paper we will use a number of different filtrations and $\sigma$-fields. Set $\mathcal{F}^B=\{\mathcal{F}^B_t\}_{t\geq 0}$
be the filtration generated by the Brownian motion $B$. We will also need the extended filtration $\mathcal{F}^{B,W}=
 \{\mathcal{F}^{B,W}_t\}_{t\geq 0}$
 given by
 $$ \mathcal{F}^{B,W}_t = \mathcal{F}^{B}_t \bigvee \sigma(W(x), x\in \RR),\;t\geq 0.  $$

$C_b(\RR)$ denotes the space of all bounded continuous functions on $\RR$. For $\lambda\in (0,1)$, and $a<b$, let $\| \cdot \|_{\lambda, [a,b]}$ the $\lambda$-H\"older norm for functions on $[a,b]$, that
is,
\begin{equation}
\label{holdnorm}
 \| f\|_{\lambda, [a,b]} \equiv \|f\|_{\infty,[a,b]}+ \sup_{x,y\in [a,b]}\frac{|f(x)-f(y)|}{|x-y|^{\lambda}}
\end{equation}
where $ \|\cdot\|_{\infty,[a,b]}$ is the supremum norm. Similarly $\| \cdot \|_{\lambda}$ will denote the $\lambda$-H\"older norm for functions
on $\RR$.
Let $C^{\lambda}([a,b])$ (resp. $C^{\lambda}$)
be the space of H\"older continuous functions $f$ on $[a,b]$ (resp. on $\RR$)
with $\| f \|_{\lambda, [a,b]}<\infty$ (resp. $\| f \|_{\lambda,\RR}<\infty$ ).
The notation $A\lesssim B$ means $A\le C B$ for some non-negative constant $C$.

\section{Main results} 
\label{sec:main_results}

It is evident that to understand equation \eqref{equation}, one should first properly define the drift term $\int_0^t \dot{W}(X(s))ds$. For a two-sided Brownian motion $W$, $\dot{W}$ is not a function but a distribution (generalized functions), this integral has no canonical meaning. However, if the process $X$ admits the It\^o-McKean presentation \eqref{sol-rep} for some Brownian motion $B$ independent of $W$, we can define this integral in such a way that the map $W\mapsto \int \dot{W}(X(s))ds$ is an extension of the integration on smooth functions, i.e $\int \dot{f}(X(s))ds$ for a regular function $f$. 

Let us now describe our method in more details by the following heuristic argument. We first fix $W$ and $B$, and adopt the following strategy. Let $L_X(t, x)$ be the local time of the process $X$ which is defined as the unique process  such that
\begin{equation}
\int_0^t f(X(s)) ds=\int_\RR   L_X(t, x) f(x) dx\,, \quad \forall \ t\ge 0  \quad \hbox{and} \quad \forall f\in C_b(\RR).\label{e.1.3}
\end{equation}
From the representation \eref{sol-rep}, we see that
\begin{equation}
L_X(t, x)=e^{-W(x)} L_B(T_{W, B}^{-1}(t), S_W(x))\,,\label{e.1.4}
\end{equation}
where $L_B(t, x)$ is the local time for Brownian motion $B$, $S_W$ and $T_{W,B}$ are
defined by \eref{sw} and \eref{twb}. Using  the definition \eref{e.1.3} of the local time, we formally write
\begin{equation}
\label{20_08_1}
\int_0^t \dot W(X(s)) ds=\int_\RR L_X(t, x) \dot W(x)dx=\int_\RR L_X(t, x) W(d^ox)\,.
\end{equation}
A fundamental problem arises: in what sense should one interpret $W(d^ox)$, the above stochastic integral with respect to $W$? Note that
for fixed $t$, the process $x\mapsto L_X(t,x)$  is not
necessarily adapted, which is one of the difficulties.  
If $W$ were a smooth function the above integral would be the usual (pathwise) integral. Hence the last integral in \eqref{20_08_1} should be defined as the
(anticipative) Stratonovich stochastic integral so that 
the integrations in \eqref{20_08_1} are extensions of the classical setting of smooth functions. It turns out that with this interpretation, the process $X$ given by~\eref{sol-rep} will indeed solve~\eref{equation} (weakly). This  can also been seen from our approximation argument described in Section \ref{sec:prelim}.

Let us explain how the  Stratonovich integral
 $\int_\RR L_X(t, x) W(d^ox)$ can be defined rigorously. Presumably, one may use the  anticipative stochastic calculus  (\cite{np}) (with the help of Malliavin calculus)
to define this integral.  However, we immediately encountered  a difficulty to show the square integrability of $L_X(t, x)$.  Instead, we use \eref{e.1.4} and \eref{20_08_1} to formally write
\begin{align}
\int_0^t \dot W(X(s)) ds
&= \int_\RR L_X(t, x) W(d^ox)=\int_\RR e^{-W(x)} L_B(T_{W,B}^{-1}(t), S_W(x))
W(d^ox)\nonumber\\
&=\int_\RR e^{-W(x)} L_B(\xi, S_W(x))
W(d^ox)\Big|_{\xi=T_{W,B} ^{-1}(t)}\,.\label{key-identity}
\end{align}
The
precise definition of the expression on the right hand side of \eref{key-identity} will be given in this section, and eventually this will enable 
us to give a meaning to $\int_0^t \dot W(X(s)) ds$ (see Definition~\ref{def2.2} below).

In fact,
 throughout the paper, we can consider a more general situation, namely the integral of the type 
\begin{equation}\label{type.itgr}
	\int_0^t g(X(s),W(X(s)))\dot W(X(s)) ds.	
\end{equation}
This generalization will later allow us to develop It\^o calculus on equation \eqref{equation} and obtain strong uniqueness result. Concerning the function $g$, we assume that $g:\RR\times\RR\to\RR$ is a \textit{deterministic} continuous function such that
    \begin{itemize}
        \item For every $x\in\RR$, the function $u\mapsto g(x,u)$ is continuously differentiable,
        \item For every $u\in\RR$, the functions $x\mapsto g(x,u)$ and $x\mapsto \partial_u g(x,u)$ are H\"older continuous of order $\lambda$ with $\lambda>1/2$.
     \end{itemize}
In addition, we assume that $g$ satisfies the analytic bounds
    \begin{equation}\label{cond.g1}
        \sup_{x\in K}|g(x,u)|\le c_1(K)e^{\theta |u|}
    \end{equation}
    and
    \begin{equation}\label{cond.g2}
        \sup_{x,y\in K}\frac{|g(x,u)-g(y,u)|}{|x-y|^\lambda}+\sup_{x,y\in K}\frac{|\partial_ug(x,u)-\partial_ug(y,u)|}{|x-y|^\lambda}\le c_2(K)e^{\theta |u|}
    \end{equation}
    for every $u\in\RR$ and compact interval $K$, where $\theta, c_1(K) $ and $c_2(K)$ are some positive constants.

    Note that for any fixed $\xi\geq 0$, the mapping $x\mapsto g(x,W(x)) L_B(\xi, S_W(x)), x\in \RR_+$ is {\it adapted} with respect to the filtration generated by
$\{W(z),\; z\in [0,x]\}_{x\geq 0}$. Similarly the mapping $x\mapsto g(x,W(x)) L_B(\xi, S_W(x)), x\in \RR_{-}$ is {\it adapted} with respect to the filtration generated by
$\{W(z),\; z\in [x,0]\}_{x\leq 0}$. To elaborate this point, we define
$$ \widetilde W(x)=W(-x), \;x\geq 0.$$
Let $  W(dx)$ and $ \widetilde W(dx)$ denote It\^o differentials. Then for any $a\leq b$, and continuous function $g$ on $\RR^2$, we define the It\^o integral
\begin{multline}
\label{def_Ito}
 \int_a^b  g(x,W(x)) L_B(\xi, S_W(x))W(dx) 
	\\= \left\{ \begin{array}{l} \int_a^b g(x,W(x)) L_B(\xi, S_W(x))
W(dx),\; {\rm if} \; 0\leq a\leq b\\
\mbox{}\\
            \int_0^{|a|}  g(x,W(-x)) L_B(\xi, S_W(-x))\widetilde W(dx) \\
            \mbox{}\\
             \;\;\; + \int_0^b  g(x,W(x)) L_B(\xi, S_W(x))
W(dx),\; {\rm if} \; a\leq 0\leq b,
  \\
  \mbox{}\\
   \int_{|b|}^{|a|}  g(x,W(-x)) L_B(\xi, S_W(-x)) \widetilde W(dx),\;{\rm if}\; a\leq b\leq 0.
                \end{array}
\right.
\end{multline}
Now   for any $a\leq b,\, \xi>0$,  and any continuous  function $g$ satisfying \eqref{cond.g1} and $\eqref{cond.g2}$, we define
 \begin{multline}
      \int_{a}^b g(x,W(x))L_B(\xi,S_W(x))W(d^ox)
     	:= \int_{a}^b g(x,W(x))L_B(\xi,S_W(x))W(dx)
      \\-\frac12 \int_{a}^{b}\partial_u g(x,W(x))L_B(\xi,S_W(x))dx\,, \label{strat_int}
\end{multline}
where $\int_{a}^b g(x,W(x))L_B(\xi,S_W(x))W(dx)$ is the It\^o stochastic integral defined in~\eref{def_Ito}. While the right hand side of \eqref{strat_int} is valid for bigger classes of functions, we restricted ourselves to conditions \eqref{cond.g1} and \eqref{cond.g2} because it is this specific class in which most of the limiting results of the current work hold. The following result, whose proof 
is given in Section~\ref{sec:stratonovich_integral_II}, confirms that the integration defined in \eqref{strat_int} is indeed of Stratonovich type.

    \begin{proposition}\label{prop.strat} Assume that $g$ satisfies the conditions \eqref{cond.g1} and \eqref{cond.g2} with some  $\lambda>1/2$. In addition, we assume that $u\mapsto\partial_u g(x,u)$ is continuously differentiable. Fix arbitrary $a<b$. Let $\pi:  a=x_0<x_1<\cdots<x_n=b$ be  a partition of the interval $[a,b]$ and  let $|\pi|=\max_{0\le i\le n-1} (x_{i+1}-x_i)$.  Let
        \begin{equation}
            \label{polygW_f}
              W_\pi(x)=W(x_i)+\left(W(x_{i+1})-W(x_i)\right)\frac{x-x_i
              }{x_{i+1}-x_i}\,, \quad x_{i}\le x<x_{i+1}\,,
        \end{equation}
         be  the linear interpolation of $W$ associated with the partition $\pi$. Then
        \begin{multline}\label{def.Gab}
            \int_a^b g(x,W(x)) L_B(\xi,S_W(x))W(d^ox)
            \\=\lim_{|\pi|\to0} \int_a^b  g(x,W(x))L_B(\xi,S_W(x))\dot{W}_\pi(x)dx\,,
         \end{multline}
        where the limit in \eqref{def.Gab} is in $L^2$.
  \end{proposition}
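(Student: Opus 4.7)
My plan is to Taylor-expand the integrand on each subinterval of the partition and identify the limit as the sum of an Itô Riemann sum and a quadratic-covariation correction. Setting $G(x):=g(x,W(x))L_B(\xi,S_W(x))$, the polygonal integral factors as
\begin{equation*}
\int_a^b G(x)\dot W_\pi(x)\,dx=\sum_i\frac{\Delta W_i}{\Delta x_i}\int_{x_i}^{x_{i+1}}G(x)\,dx,
\end{equation*}
with $\Delta W_i:=W(x_{i+1})-W(x_i)$ and $\Delta x_i:=x_{i+1}-x_i$. On each $[x_i,x_{i+1}]$ I would combine the $\lambda$-H\"older regularity of $g$ in its first argument (from \eqref{cond.g2}), the $C^2$-regularity in its second argument (guaranteed by the assumption that $u\mapsto\partial_u g(x,u)$ is continuously differentiable), and the H\"older-$\eta$ regularity of $y\mapsto L_B(\xi,y)$ (with $\eta<1/2$) composed with the smoothness of $S_W$, to write
\begin{equation*}
G(x)=G(x_i)+\partial_u g(x_i,W(x_i))\bigl(W(x)-W(x_i)\bigr)L_B(\xi,S_W(x_i))+R_i(x),
\end{equation*}
where $R_i$ collects an $O(\Delta x_i^\lambda)$ H\"older piece from $g(\cdot,u)$, an $O(\Delta x_i^\eta)$ piece from the H\"older increment of $L_B(\xi,S_W(\cdot))$, and an $O((W(x)-W(x_i))^2)$ second-order Taylor piece in $u$.

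Plugging in and summing splits the polygonal integral into three contributions. The first is the left-endpoint Itô Riemann sum $\sum_i G(x_i)\Delta W_i$. Since $G$ is adapted to the filtration generated by $\{W(z):0\le z\le x\}$ (as noted in the paragraph preceding \eqref{def_Ito}) and has the required moments via \eqref{cond.g1} combined with the $L^p$-bounds on $L_B$ established earlier in the paper, a standard Itô-isometry argument shows this sum converges in $L^2$ to $\int_a^b G(x)W(dx)$. The second contribution is the correction sum
\begin{equation*}
\sum_i \partial_u g(x_i,W(x_i))L_B(\xi,S_W(x_i))\cdot\frac{\Delta W_i}{\Delta x_i}\int_{x_i}^{x_{i+1}}\!(W(x)-W(x_i))\,dx.
\end{equation*}
The exact identity $\EE[\Delta W_i(W(x)-W(x_i))]=x-x_i$ for $x\in[x_i,x_{i+1}]$, combined with independence of Brownian increments over disjoint subintervals, shows that the random factor $\tfrac{\Delta W_i}{\Delta x_i}\int_{x_i}^{x_{i+1}}(W(x)-W(x_i))\,dx$ concentrates in $L^2$ on its mean $\tfrac12\Delta x_i$, so this sum converges in $L^2$ to $\tfrac12\int_a^b\partial_u g(x,W(x))L_B(\xi,S_W(x))\,dx$, which matches the correction term in \eqref{strat_int}.

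The main obstacle is controlling the total remainder $\sum_i \tfrac{\Delta W_i}{\Delta x_i}\int_{x_i}^{x_{i+1}}R_i(x)\,dx$. Direct pathwise bounds produce three pieces. The $O(\Delta x_i^\lambda)$ piece contributes $\sum_i|\Delta W_i|\Delta x_i^\lambda$, which vanishes in $L^2$ precisely because $\lambda>1/2$; the $O((W(x)-W(x_i))^2)$ piece contributes $\sum_i|\Delta W_i|\sup_{[x_i,x_{i+1}]}(W(x)-W(x_i))^2$, which vanishes by the standard Brownian quadratic-variation estimate. The $O(\Delta x_i^\eta)$ piece from the local-time increment is the delicate one: since $\eta<1/2$, a naive pathwise bound $\sum_i|\Delta W_i|\Delta x_i^\eta$ is not summable. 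I would handle it by re-expressing this contribution in the form $\sum_i A_i\Delta W_i$ with $A_i$ adapted to the filtration of $W$, and then applying the conditional Itô isometry together with the higher-moment estimates for $L_B$ established earlier in the paper to control $\sum_i\EE[A_i^2]\Delta x_i$ and drive it to zero as $|\pi|\to 0$. This is precisely where those higher-moment bounds for $L_B$ enter: they provide enough integrability so that the cancellations built into Itô integration can overcome the suboptimal H\"older exponent of $L_B(\xi,\cdot)$. Finally, the case $a<0$ reduces to the positive half-line by running the same argument on $[0,b]$ and on $[0,|a|]$ using the time-reversed Brownian motion $\widetilde W$, in accordance with the decomposition \eqref{def_Ito}.
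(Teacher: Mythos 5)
Your decomposition is essentially the paper's: your Itô Riemann sum is its $I_4$, your correction sum is its $I_3$ (split into a martingale-difference part and a Riemann sum of $\tfrac12\int_a^b\partial_u g(x,W(x))L_B(\xi,S_W(x))\,dx$), your $O(\Delta x_i^\lambda)$ remainder is its $I_1$, and your treatment of all three of these terms is correct. The $a\le 0$ reduction via $\widetilde W$ is also as in the paper.

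The gap is in the one term you yourself flag as delicate, namely
\begin{equation*}
I_2=\sum_{k} \int_{x_k}^{x_{k+1}}g(x_k,W(x))\bigl[L_B(\xi,S_W(x))-L_B(\xi,S_W(x_k))\bigr]\frac{\Delta W_k}{\Delta x_k}\,dx .
\end{equation*}
You propose to write this as $\sum_i A_i\Delta W_i$ with $A_i$ adapted and invoke the conditional Itô isometry. But the coefficient
$A_i=\frac{1}{\Delta x_i}\int_{x_i}^{x_{i+1}}g(x_i,W(x))[L_B(\xi,S_W(x))-L_B(\xi,S_W(x_i))]\,dx$
depends on $W$ \emph{inside} $(x_i,x_{i+1}]$ (through both $g(x_i,W(x))$ and $S_W(x)$), so it is not measurable with respect to the $\sigma$-field generated by $W$ up to $x_i$; there is no martingale-difference structure and the cross terms $\EE[A_jA_k\Delta W_j\Delta W_k]$, $j\ne k$, do not vanish. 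Nor does conditioning on $W$ help, since $A_j$ and $A_k$ are correlated through $B$. This is exactly the point where the paper needs its new estimate: it splits $\EE(I_2^2)$ into a diagonal sum, controlled by \eqref{ineq.Lxy} with $\beta=1/2$ after first taking $\EE^B$, and an off-diagonal sum, where it observes that for $j\ne k$ the spatial intervals $[S_W(x_j),S_W(x))$ and $[S_W(x_k),S_W(z))$ are disjoint and applies the product-moment bound \eqref{ineq.Lxyk} with $\alpha=0$ to get
$\EE^B\bigl[\Delta_jL_B\,\Delta_kL_B\bigr]\le |S_W(x)-S_W(x_j)|\,|S_W(z)-S_W(x_k)|$,
which after integrating in $W$ makes the cross terms vanish as $|\pi|\to0$. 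Your appeal to ``the higher-moment estimates for $L_B$'' points at the right toolbox, but the specific mechanism you describe (orthogonality of the cross terms) is not available, and the argument as written does not close without an estimate of the type \eqref{ineq.Lxyk} for products of local-time increments over disjoint intervals.
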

\noindent The regularity of this integration is described in the following result.
  \begin{lemma}\label{prop.Gxiy} Let $g$ be a continuous function satisfying \eqref{cond.g1} and \eqref{cond.g2}. Then there exists a version of the process $$(\xi,a)\mapsto \int_{-a}^a g(x,W(x))L_B(\xi,S_W(x))W(d^ox)$$ which is jointly continuous in $(\xi,a)\in \RR_+\times \RR_+$.
  \end{lemma}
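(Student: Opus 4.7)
The approach is to apply Kolmogorov's continuity theorem for two-parameter processes. Set
\begin{equation*}
F(\xi, a) := \int_{-a}^a g(x,W(x)) L_B(\xi, S_W(x)) W(d^ox),
\end{equation*}
and use the decomposition $F = I - \tfrac12 J$ from \eqref{strat_int}, where $I$ is the It\^o piece and $J$ is the Lebesgue correction $\int_{-a}^a \partial_u g(x,W(x)) L_B(\xi,S_W(x))\,dx$. The plan is to prove that for every $T, A > 0$ and all $p$ sufficiently large,
\begin{equation*}
\EE|F(\xi_1,a_1) - F(\xi_2,a_2)|^p \lesssim |\xi_1-\xi_2|^{\alpha p} + |a_1-a_2|^{\beta p}
\end{equation*}
uniformly in $(\xi_i,a_i) \in [0,T]\times[0,A]$, with $\alpha p, \beta p > 2$, and then invoke the multi-parameter Kolmogorov criterion. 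By the triangle inequality it suffices to control $a$-increments and $\xi$-increments separately.

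For the $a$-increments with $\xi$ fixed, take $0 \le a_1 < a_2 \le A$. The It\^o difference $I(\xi,a_2)-I(\xi,a_1)$ is a stochastic integral with respect to $W$ over $[-a_2,-a_1] \cup [a_1,a_2]$ of an integrand adapted to the relevant one-sided filtrations of $W$ (cf.\ the remark preceding \eqref{def_Ito}). The Burkholder-Davis-Gundy inequality followed by Jensen gives
\begin{equation*}
\EE|I(\xi,a_2)-I(\xi,a_1)|^p \lesssim |a_2-a_1|^{p/2-1} \int \EE\bigl[g(x,W(x))^p L_B(\xi,S_W(x))^p\bigr]\,dx.
\end{equation*}
By independence of $B$ and $W$, conditioning on $W$ factorizes the integrand: $\EE[L_B(\xi,y)^p] \lesssim \xi^{p/2}$ uniformly in $y$ by classical local-time estimates, while $\EE[g(x,W(x))^p]$ is bounded using \eqref{cond.g1} and Gaussian moments of $W(x)$ for $|x| \le A$. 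The correction $J$ is handled by Minkowski's inequality in the same spirit. This gives $\beta = 1/2$.

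For the $\xi$-increments with $a$ fixed---the principal obstacle, since $L_B(\xi,y)$ possesses no martingale structure in $\xi$---write
\begin{equation*}
F(\xi_1,a)-F(\xi_2,a) = \int_{-a}^a g(x,W(x))\bigl[L_B(\xi_1,S_W(x))-L_B(\xi_2,S_W(x))\bigr] W(d^ox),
\end{equation*}
and again apply BDG to the It\^o part and Minkowski to the correction. After conditioning on $W$, the remaining expectation reduces to $\EE|L_B(\xi_1,y)-L_B(\xi_2,y)|^p$, which, by the higher-moment local-time estimates developed in the preceding sections via the Garsia-Rodemich-Rumsey inequality, is $\lesssim |\xi_1-\xi_2|^{\gamma p}$ for some $\gamma > 0$, uniformly in $y$ on compact sets. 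The random range $S_W([-a,a])$ is absorbed by standard exponential moments of $\sup_{|x|\le A}|W(x)|$ together with \eqref{cond.g1}--\eqref{cond.g2}.

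Choosing $p$ large enough that $\min(p/2,\,\gamma p) > 2$, the two-parameter Kolmogorov continuity theorem produces a jointly H\"older continuous version of $F$ on every rectangle $[0,T]\times[0,A]$. Since the versions on overlapping rectangles coincide by $L^p$-uniqueness, they glue to a single jointly continuous version on $\RR_+\times\RR_+$, as claimed. The key difficulty, and the one that forces the GRR machinery on $L_B$ advertised in the introduction, is the lack of any martingale structure in the $\xi$-variable; once the GRR-based moment bounds on $\xi$-increments of $L_B$ are in place, the rest is a routine combination of BDG, Minkowski, and independence of $B$ and $W$.
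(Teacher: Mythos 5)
Your proposal is correct and follows essentially the same route as the paper: decompose via \eqref{strat_int}, bound moments of the It\^o part by the martingale (BDG) moment inequality together with $\EE^B|L_B([\xi_1,\xi_2],y)|^{p}\lesssim|\xi_2-\xi_1|^{p/2}$ and the exponential bound \eqref{cond.g1}, then apply the two-parameter Kolmogorov theorem; the paper merely bounds the rectangular increment in both variables at once with $p=4$, whereas you bound the two directional increments separately and take $p>4$ — a cosmetic difference. One small correction: the $\xi$-increment moment bound for $L_B$ comes directly from the Kac moment formula \eqref{jmoment}, not from the Garsia--Rodemich--Rumsey machinery, which the paper needs elsewhere.
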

\begin{proof}
 From \eqref{strat_int}, it is sufficient to show the process $$H(\xi,y)=\int_{0}^{y}g(x,W(x))L_B(\xi,S_W(x))dW(x)$$ has a jointly continuous version. Fix $y_1<y_2<N$, $\xi_1<\xi_2$, using martingale moment inequality and \eqref{cond.g1}, we obtain
    \begin{align*}
      &\EE |H(\xi_1,y_1)-H(\xi_1,y_2)-H(\xi_2,y_1)+H(\xi_2,y_2)|^4 
      \\&= \EE \left|\int_{y_1}^{y_2}g(x,W(x))L_B([\xi_1,\xi_2],S_W(x))dW(x) \right|^4\\
      &\lesssim   |y_2-y_1|\int_{y_1}^{y_2}\EE e^{4 \theta|W(x)|}|L_B([\xi_1,\xi_2],S_W(x))|^4dx\,.
    \end{align*}
    It is straightforward to verify that (see also the identity \eqref{jmoment} below)
    \begin{equation*}
      \EE^B |L_B([\xi_1,\xi_2],S_W(x))|^4\le C|\xi_2-\xi_1|^2\,.
    \end{equation*}
    Hence,
    \begin{multline*}
      \EE |H(\xi_1,y_1)-H(\xi_1,y_2)-H(\xi_2,y_1)+H(\xi_2,y_2)|^4
      \\\le C|y_2-y_1||\xi_2-\xi_1|^2 \int_{y_1}^{y_2}e^{8 \theta^2|x|}dx 
      \le C_N |y_2-y_1|^2|\xi_2-\xi_1|^2\,.
    \end{multline*}
    The result then follows from two-parameter Kolmogorov theorem.
\end{proof}

\noindent As an immediate consequence, we have 
\begin{lemma}
\label{exist_G}
  Let $g$ be a continuous function satisfying \eqref{cond.g1} and \eqref{cond.g2}. Then for any fixed $\xi\geq 0$,  the limit
$$ \lim_{a\rightarrow\infty}  \int_{-a}^ag(x,W(x))L_B(\xi,S_W(x))W(d^ox)$$
exists almost surely. We will denote the limiting process as
\begin{equation*}
    \int_{-\infty}^\infty g(x,W(x))L_B(\xi,S_W(x))W(d^ox)\,.
\end{equation*}
Furthermore, for any fixed  $\xi\geq 0$, we define
\begin{equation}
    \tau_{W, B}
    (\xi)=\inf\{x>0:S_W(x)>|\max_{s\in[0,\xi]}B_s| \}\,. \label{tau-W-B}
\end{equation}
Then,
\begin{equation}
    \label{tau_fin}
    \tau_{W, B}(\xi)<\infty, \;{\rm a.s.},
\end{equation}
  and for all $\xi\ge0$,
\begin{multline}\label{id.ginfty}
\int_{-\infty}^\infty g(x,W(x))L_B(\xi,S_W(x))W(d^ox)
\\= \int_{-\tau_{W, B}(\xi)}^{\tau_{W, B}(\xi)} g(x,W(x))L_B(\xi,S_W(x))W(d^ox)\,.
\end{multline}
As a consequence, the process $\xi\mapsto \int_{-\infty}^\infty g(x,W(x))L_B(\xi,S_W(x))W(d^ox)$ has a continuous version.
\end{lemma}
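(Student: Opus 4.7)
My strategy is to exploit the compact spatial support of Brownian local time: $L_B(\xi,y)=0$ whenever $y$ lies outside $[\min_{s\leq\xi}B_s,\max_{s\leq\xi}B_s]$, so the integrand $g(x,W(x))L_B(\xi,S_W(x))$ vanishes once $|S_W(x)|$ exceeds $\max_{s\in[0,\xi]}|B_s|$. Everything in the lemma is essentially a consequence of this localization together with Lemma~\ref{prop.Gxiy}.

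For~\eqref{tau_fin}: since $e^{W(z)}>0$, the map $x\mapsto S_W(x)=\int_0^xe^{W(z)}dz$ is strictly increasing, and by an elementary argument using arbitrarily long excursions of $W$ above any fixed positive level, $\int_0^\infty e^{W(z)}dz=\infty$ and $\int_{-\infty}^0 e^{W(z)}dz=\infty$ almost surely. Combined with the a.s.\ finiteness of $\max_{s\in[0,\xi]}|B_s|$, the infimum defining $\tau_{W,B}(\xi)$ is attained at a finite value almost surely.

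For the identity~\eqref{id.ginfty}: for $x>\tau_{W,B}(\xi)$ the definition~\eqref{tau-W-B} gives $S_W(x)>\max_{s\in[0,\xi]}|B_s|\geq\max_{s\in[0,\xi]}B_s$, hence $L_B(\xi,S_W(x))=0$. The negative branch is handled via the second line of the decomposition~\eqref{def_Ito}, rewriting the contribution over $[-a,0]$ as an It\^o integral against the reversed Brownian motion $\widetilde W$; a symmetric hitting-time argument (using $S_W(-x)\to-\infty$) shows the integrand vanishes for $x<-\tau_{W,B}(\xi)$ as well. Consequently both the It\^o term and the Lebesgue correction in~\eqref{strat_int} are constant in $a$ once $a\geq\tau_{W,B}(\xi)$, yielding simultaneously the a.s.\ existence of $\lim_{a\to\infty}$ and the identity~\eqref{id.ginfty}.

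For the continuity assertion: since $\xi\mapsto\tau_{W,B}(\xi)$ is nondecreasing and a.s.\ finite, on any compact interval $[0,T]$ there exists an a.s.\ finite random $A=A(T)$ with $\tau_{W,B}(\xi)\leq A$ for all $\xi\in[0,T]$. By~\eqref{id.ginfty}, the process $\xi\mapsto\int_{-\infty}^\infty g(x,W(x))L_B(\xi,S_W(x))W(d^ox)$ coincides on $[0,T]$ with $\xi\mapsto\int_{-A}^A g(x,W(x))L_B(\xi,S_W(x))W(d^ox)$, which is continuous in $\xi$ by the jointly continuous version produced in Lemma~\ref{prop.Gxiy}; letting $T\uparrow\infty$ yields a continuous version on all of $\RR_+$. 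The main delicate point I anticipate is the symmetric treatment of the negative-$x$ branch, since the stated definition of $\tau_{W,B}(\xi)$ directly controls $S_W$ only on the positive half-line; the resolution relies on the decomposition~\eqref{def_Ito} together with a parallel hitting-time argument on $(-\infty,0]$ for $\widetilde W$.
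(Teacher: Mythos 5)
Your proof is correct, and the core of it — localizing via the compact support of $L_B(\xi,\cdot)$ in the spatial variable and then invoking the joint continuity from Lemma~\ref{prop.Gxiy} — is exactly the paper's strategy. You diverge in two places, both legitimately. First, for \eqref{tau_fin} the paper does not argue that $S_W(K)\to\infty$ pathwise; it instead writes $P(\tau_{W,B}(\xi)>K)=P(S_W(K)^{-1}\ge M_B(\xi)^{-1})\le \EE[M_B(\xi)]\,\EE[S_W(K)^{-1}]$ and quotes the Matsumoto--Yor asymptotic $\EE[S_W(K)^{-1}]\sim(2\pi K)^{-1/2}$ to send this to zero. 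Your route via $\int_0^\infty e^{W(z)}dz=\infty$ a.s.\ is more elementary and avoids an external moment computation; the paper's route gives a quantitative tail bound as a by-product. Second, for the continuity in $\xi$ the paper substitutes $a=\tau_{W,B}(\xi)$ into the two-parameter continuous version and appeals to continuity of $\xi\mapsto\tau_{W,B}(\xi)$, whereas you only use that $\tau_{W,B}$ is nondecreasing and hence bounded on compacts, freezing $a=A(T)$; your version is cleaner since monotonicity alone does not give continuity of $\tau_{W,B}$ and nothing more is needed. Finally, the delicate point you flag is real: as written, \eqref{tau-W-B} controls $S_W$ only on $(0,\infty)$, so the vanishing of the integrand on $(-\infty,-\tau_{W,B}(\xi))$ requires a separate stopping point $\tau^-$ built from $\min_{s\le\xi}B_s$ and $x\mapsto S_W(-x)$; the paper's proof silently asserts support in $[-\tau_{W,B}(\xi),\tau_{W,B}(\xi)]$. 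With a two-sided stopping point the limit in $a$ and the stabilization argument go through verbatim (one should then read \eqref{id.ginfty} with $\tau_{W,B}(\xi)$ replaced by the larger of the two one-sided times), so your treatment is if anything the more careful one.
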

\begin{proof}
    We denote $M_B(\xi)=|\max_{s\in[0,\xi]}B_s|$. A result of Matsumoto and Yor in \cite{matyor2}*{identity (4.5)} shows that
 \begin{equation}
\label{eq100}
\lim_{K\to\infty} \sqrt{2\pi K}\,\EE[S_W(K)^{-1}]=1\,.
\end{equation}
    On the other hand, for each $K>0$ (recall also that $B$ and $W$ are independent)
    \begin{align*}
      P(\tau_{W,B}(\xi)>K)
      &=P(S_W(K)^{-1}\ge M(\xi)^{-1})
      \\&\le \EE [M_B(\xi)]\EE [S_W(K)^{-1}]
      \lesssim   \EE [S_W(K)^{-1}]\,.
    \end{align*}
  Together with \eref{eq100}, it follows that $\lim_{K\to\infty} P(\tau_{W,B}(\xi)>K)=0$. From here, we deduce \eref{tau_fin}.

Since $S_W(\cdot)$ is strictly increasing, if $y$ is such that $y>\tau_{W,B}(\xi)$, then $S_W(y)> |\max_{s\in[0,\xi]}B_s|$, and hence $L_B(\xi,S_W(y))$ vanishes. As a consequence, with probability one, the map $x\mapsto g(x,W(x))L_B(\xi,S_W(x))$ is supported in the interval $[-\tau_{W,B}(\xi),\tau_{W,B}(\xi)]$. Therefore, the limit of $\int_{-a}^a g(x,W(x))L_B(\xi,S_W(x))W(d^ox)$ as $a$ goes to $\infty$ exists almost surely. From here, we also obtain \eqref{id.ginfty}. By Lemma~\ref{prop.Gxiy}, the map  $(\xi,a)\mapsto \int_{-a}^a g(x,W(x))L_B(\xi,S_W(x))W(d^ox)$ is continuous. This together with  continuity of
 $\xi\mapsto \tau_{W,B}(\xi)$ implies that the process $$\xi\mapsto\int_{-\infty}^\infty g(x,W(x))L_B(\xi,S_W(x))W(d^ox)$$ has a continuous version.
 \end{proof}

With the help of Lemmas~\ref{prop.Gxiy}, \ref{exist_G}  we can now
define the integral of the type \eqref{type.itgr} for sufficiently regular functions $g$ 
and $X$ as in \eqref{sol-rep}.
  \begin{definition}
\label{def2.2}
    Let $X$ be the process in \eqref{sol-rep}. Suppose that $g$ is a function satisfying conditions \eqref{cond.g1} and \eqref{cond.g2}. Then for every $t\ge0$, we define
    \begin{multline}\label{def.intW}
      \int_0^t g(X(s),W(X(s)))\dot{W}(X(s))ds
      \\:=\int_{-\infty}^\infty g(x,W(x))e^{-W(x)}L_B(\xi,S_W(x))W(d^ox)\big|_{\xi=T_{W,B} ^{-1}(t)},
    \end{multline}
    where $T_{W,B}$ is defined by \eref{twb} and $T_{W,B}^{-1}$ is the inverse
     of $T_{W,B}$.
In particular,  for $g\equiv 1$ we have
\begin{equation}\label{equ.st-integral}
\int_0^t \dot W(X(s)) ds=\int_{-\infty}^\infty e^{-W(x)}L_B(\xi,S_W(x))W(d^ox)\big|_{\xi=T_{W,B} ^{-1}(t)}
\end{equation}
for all $t\ge0$.
\end{definition}
\noindent From Lemma \ref{exist_G}, the process $\xi\mapsto \int_{-\infty}^\infty g(x,W(x))e^{-W(x)}L_B(\xi,S_W(x))W(d^ox)$
 has a continuous version. In addition, since the map $t\mapsto T^{-1}_{W,B}(t)$ is also continuous, we see
 that the process $$t\mapsto   \int_0^t g(X(s),W(X(s)))\dot{W}(X(s))ds$$ also has a continuous version. 
 From now on, we will only consider this continuous version whenever we write either
 $\int_0^t g(X(s),W(X(s)))\dot{W}(X(s))ds$ or alternatively its two other equivalent presentations
 \begin{multline*}
 	\int_{-\infty}^\infty g(x,W(x))e^{-W(x)}L_B(T^{-1}_{W,B}(t),S_W(x))W(d^ox)
 	\\=\int_{-\infty}^\infty g(x,W(x)) L_X(t,x)W(d^ox)\,.
 \end{multline*}
In the above, the equality can be seen from \eqref{e.1.4}.


Now with a rigorous definition of $\int_0^t \dot W(X(s)) ds$ at hand we can now precisely describe the notions of strong and weak solutions to~\eqref{equation}.
\begin{definition}[Strong solution] \label{def.strslt} 
Let $(W(x), x\in \RR)$ be a two-sided Brownian motion, and $(\cB(t), t\ge 0)$  be a Brownian motion with respect to a usual filtration $(\cF^\cB)_{t\ge0}$, independent of $W$. Let $\cF^{\cB,W}=(\cF^{\cB,W}_t)_{t\ge0}$ be the extended filtration given by
\begin{equation*}
 	\cF^{\cB,W}_t=\cF^{\cB}_t\bigvee \sigma(W(x),x\in\RR)\,,\forall t\ge0\,.
\end{equation*}
We assume that $\cF^{\cB,W}$ also satisfies the usual conditions. 
A continuous process $(X(t), t\geq 0)$ is a strong solution to~\eqref{equation} if it  satisfies the following conditions:
\begin{enumerate}[\rm(i)]
	\item $X$ is adapted to the extended filtration $\cF^{\cB,W}$.
 	\item There exists a Brownian motion $(B(t), t\ge 0)$  independent of $W$ such that $X(t)$ admits the It\^o-McKean representation \eref{sol-rep}.
	\item For every $t$, the integral $\int_0^t \dot{W}(X(s))ds $ is well defined as in Definition~\ref{def2.2}.        
    \item For every $t\ge0$, the equation
        \begin{equation*}
            X(t)=\cB(t)-\frac12\int_0^t \dot{W}(X(s))ds
         \end{equation*}
         holds almost surely. 
\end{enumerate}
\end{definition}
\begin{definition}[Weak solution]\label{def.wslt}
	Let $(W(x), x\in \RR)$ be a two-sided Brownian motion on a probability space $(\Omega, \cF, (\cF_t)_{t\geq 0}, P)$.
 A pair $(X,\cB)$ 
is a weak solution to \eqref{equation} on $(\Omega, \cF, (\cF_t)_{t\geq 0}, P)$ if it  satisfies the following conditions:
\begin{enumerate}[\rm(i)]
	\item $X$  is a continuous process process adapted to $ (\cF_t)_{t\geq 0}$ and $\cB$ is an $ (\cF_t)_{t\geq 0}$-Brownian motion 
 independent of $W$. 
 	\item There exists a Brownian motion $(B(t), t\ge 0)$  independent of $W$ such that $X(t)$ admits the It\^o-McKean representation \eref{sol-rep}.
	\item For every $t\ge0$, the integral $\int_0^t \dot{W}(X(s))ds $ is well defined as in Definition~\ref{def2.2}.        
    \item For every $t\ge0$, the equation
        \begin{equation*}
            X(t)=\cB(t)-\frac12\int_0^t \dot{W}(X(s))ds
         \end{equation*}
         holds almost surely. 
\end{enumerate}
\end{definition}
The major contribution of the current paper is the strong existence and uniqueness result for the Brox equation \eqref{equation}.
\begin{theorem} [Existence and uniqueness of strong solution] \label{thm.sext}
        Let $W$ be a two-sided Brownian motion and $\cB$ be a Brownian motion independent of  $W$.   
        Then there exists a unique strong solution $X$ to~\eqref{equation}. 
\end{theorem}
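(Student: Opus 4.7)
The plan is to reduce the singular equation~\eqref{equation} to a classical one-dimensional driftless SDE via the scale transformation $S_W$. Anticipating the It\^o--Tanaka formula developed later in the paper (built from Definition~\ref{def2.2} and Proposition~\ref{prop.strat}), if $X$ is a strong solution then $Z:=S_W(X)$ should satisfy
\[
dZ(t)=e^{W(X(t))}\,d\cB(t)=\sigma(Z(t))\,d\cB(t),\qquad Z(0)=0, \qquad (\ast)
\]
where $\sigma(z):=e^{W(S_W^{-1}(z))}$, because the It\^o correction from twice differentiating $S_W$ (formally $\tfrac12 e^{W(X)}\dot W(X)\,dt$) cancels exactly with $e^{W(X)}\cdot(-\tfrac12\dot W(X))\,dt$ coming from the Brox drift. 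Conversely, the inverse chain rule turns any solution of $(\ast)$ into a solution of~\eqref{equation}. The strategy is thus: (i)~prove pathwise uniqueness for $(\ast)$; (ii)~combine this with the weak existence already furnished by the It\^o--McKean representation to produce, via Yamada--Watanabe, a unique strong solution of $(\ast)$, and transfer back.

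For (i), conditionally on $W$ the function $\sigma$ is deterministic, continuous and strictly positive. Although $W$ is not H\"older-$1/2$, its L\'evy modulus of continuity gives on compacts $|W(y_1)-W(y_2)|\lesssim\sqrt{|y_1-y_2|\log(1/|y_1-y_2|)}$, and together with the local Lipschitzness of $S_W^{-1}$ (derivative $e^{-W}$) this yields
\[
|\sigma(z_1)-\sigma(z_2)|\ \le\ \rho(|z_1-z_2|),\qquad \rho(u):=C\sqrt{u\log(1/u)},
\]
for $z_1,z_2$ in any compact. Since $\int_{0+}du/\rho(u)^2 = \int_{0+}du/(C^2 u\log(1/u)) = \infty$, the Yamada--Watanabe criterion gives pathwise uniqueness for $(\ast)$ (conditionally on $W$, hence globally after a standard localization in the range of $Z$). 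Weak existence for $(\ast)$ is obtained by applying $S_W$ to the weak solution of~\eqref{equation} furnished by the It\^o--McKean representation~\eqref{sol-rep}, already announced as the paper's first contribution. Yamada--Watanabe then produces a unique strong solution $Z$ of $(\ast)$ that is measurable with respect to $\cF^{\cB,W}$.

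For step (ii), set $X:=S_W^{-1}(Z)$, which is automatically $\cF^{\cB,W}$-adapted. Define $\Phi(t):=\int_0^t e^{2W(X(s))}\,ds=\langle Z\rangle_t$ and $B(u):=Z(\Phi^{-1}(u))$. By Dambis--Dubins--Schwarz applied conditionally on $W$, the process $B$ is a standard Brownian motion; since its conditional law given $W$ is always the Wiener measure, $B$ is independent of $W$. A direct substitution identifies $\Phi=T_{W,B}^{-1}$ in the notation of~\eqref{twb}, so $X=S_W^{-1}\circ B\circ T_{W,B}^{-1}$ is the IMR~\eqref{sol-rep}. The inverse chain rule then confirms $X$ satisfies~\eqref{equation} in the sense of Definition~\ref{def2.2}. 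Strong uniqueness is now immediate: any two strong solutions $X_1,X_2$ yield $Z_i:=S_W(X_i)$ solving the same equation $(\ast)$, so $Z_1=Z_2$ by step (i) and hence $X_1=X_2$. The principal obstacle is the rigorous justification of the It\^o--Tanaka chain rule relating $S_W(X)$ to the classical stochastic integral $\int_0^t e^{W(X(s))}\,d\cB(s)$: this has to reconcile the non-standard drift integral of Definition~\ref{def2.2} with the ordinary It\^o integral. The natural route is to approximate $W$ by the polygonal $W_\pi$ of~\eqref{polygW_f}, apply the classical It\^o formula to $S_{W_\pi}(X_\pi)$ where $X_\pi$ is the (classical) strong solution of the regularized equation driven by $\cB$, and pass to the limit $|\pi|\to0$, using Proposition~\ref{prop.strat} together with the Brownian local time estimates promised in the introduction to identify both sides.
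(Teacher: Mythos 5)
Your proposal is correct and follows essentially the same route as the paper: your auxiliary process $Z=S_W(X)$ and equation $(\ast)$ are exactly the paper's $M$ and equation \eqref{e.M} of Lemma~\ref{lem.m}, with the same Yamada--Watanabe pathwise-uniqueness argument via the L\'evy modulus of $W$, the same weak-existence input from the It\^o--McKean representation, and the same time change back to a Brownian motion $B$ independent of $W$. The ``principal obstacle'' you identify --- rigorously deriving $S_W(\wt X(t))=\int_0^t e^{W(\wt X(s))}\,d\cB(s)$ --- is resolved in the paper exactly as you suggest, via the It\^o formula of Theorem~\ref{thm.ito} (applied to a truncation $f_R$ of $e^u$), whose two Stratonovich terms cancel because $f=\partial_u f$.
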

\noindent In proving Theorem \ref{thm.sext}, we are able to obtain existence of a pair $(X,\cB)$ satisfying \eqref{equation}. The precise statement is following. 
\begin{proposition} [Existence of a weak solution] \label{thm.wext}
	Let $(W(x), x\in \RR)$ be a two-sided Brownian motion and let
	$(B(t), t\ge 0)$ be a Brownian motion, independent of $W$.
	Let $X(t)$ be the It\^o-McKean representation given by the equation
	\eref{sol-rep} and let
	$\int_0^t \dot W(X(s)) ds$ be defined by \eref{equ.st-integral}.  Then, there is
	a Brownian motion $\mathcal{B}$  determined  by
	\begin{equation}\label{eqn.cb}
	\mathcal{B}(t)=\int_0^t e^{-W\circ S_W^{-1}\circ B\circ T_{W,B}^{-1}(s) }  d B\circ T^{-1}_{W, B}(s)\,,
	\end{equation}
	which is  independent of $W$,  such $(X,\cB)$ is a weak solution to equation \eqref{equation}.
\end{proposition}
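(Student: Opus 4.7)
The plan is to verify the two ingredients of a weak solution separately: first, that $\mathcal{B}$ defined by \eqref{eqn.cb} is a standard Brownian motion independent of $W$; second, that the identity
\[
X(t) = \mathcal{B}(t) - \tfrac{1}{2}\int_0^t \dot{W}(X(s))\, ds
\]
holds almost surely for each $t\ge 0$, with the integral on the right interpreted via Definition~\ref{def2.2}. The filtration and adaptedness conditions in Definition~\ref{def.wslt} follow from the explicit constructions, with $\mathcal{F}_t$ taken to be the augmentation of $\cF^{\cB,W}_t$.

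For the first ingredient, I would condition on $W$ and apply L\'evy's characterization. With $W$ frozen, $B$ remains a Brownian motion and $T_{W,B}$ is a strictly increasing absolutely continuous time change; hence $B\circ T^{-1}_{W,B}$ is a continuous local martingale with quadratic variation $T^{-1}_{W,B}(t)$, and the stochastic integral in \eqref{eqn.cb} is well defined. Its quadratic variation equals
\[
\int_0^t e^{-2W\circ S_W^{-1}(B\circ T_{W,B}^{-1}(s))}\, dT_{W,B}^{-1}(s) \;=\; \int_0^t ds \;=\; t,
\]
using $dT_{W,B}^{-1}(s) = e^{2W\circ S_W^{-1}(B\circ T_{W,B}^{-1}(s))}\,ds$ arising from the definition \eqref{twb} of $T_{W,B}$. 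L\'evy's theorem then shows that, conditional on $W$, $\mathcal{B}$ is standard Brownian; since this conditional law does not depend on $W$, $\mathcal{B}$ is unconditionally a Brownian motion and independent of $W$.

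For the SDE, the strategy is the polygonal approximation already featured in Proposition~\ref{prop.strat}. Let $W_\pi$ be the interpolation from \eqref{polygW_f} and form the corresponding processes $S_{W_\pi}$, $T_{W_\pi,B}$, $X_\pi := S_{W_\pi}^{-1}\circ B\circ T_{W_\pi,B}^{-1}$ and $\mathcal{B}_\pi$ by replacing $W$ with $W_\pi$ in \eqref{eqn.cb}. Because $W_\pi$ is piecewise $C^1$, the classical It\^o formula applied to $X_\pi$ (via $S_{W_\pi}^{-1}$ composed with the time-changed Brownian motion) produces the smooth Brox equation
\[
X_\pi(t) = \mathcal{B}_\pi(t) - \tfrac{1}{2}\int_0^t \dot{W}_\pi(X_\pi(s))\, ds.
\]
As $|\pi|\to 0$, continuity of the maps $W\mapsto (S_W, T_{W,B})$ and their inverses, on compact intervals, gives $X_\pi(t)\to X(t)$ and $\mathcal{B}_\pi(t)\to \mathcal{B}(t)$ in probability.

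The main obstacle, and the reason for the entire construction of Section~\ref{sec:main_results}, is the convergence of the drift term. Rewriting it through the occupation density formula as
\[
\int_0^t \dot{W}_\pi(X_\pi(s))\, ds = \int_{-\infty}^{\infty} e^{-W_\pi(x)} L_B\bigl(T_{W_\pi,B}^{-1}(t), S_{W_\pi}(x)\bigr)\, \dot{W}_\pi(x)\, dx,
\]
the task is to prove convergence to
\[
\int_{-\infty}^{\infty} e^{-W(x)} L_B\bigl(T_{W,B}^{-1}(t), S_W(x)\bigr)\, W(d^o x) \;=\; \int_0^t \dot{W}(X(s))\, ds .
\]
This is exactly the Stratonovich-type limit identified in Proposition~\ref{prop.strat} applied with $g(x,u)=e^{-u}$, combined with the uniform tail control of Lemma~\ref{exist_G} via $\tau_{W,B}$ and its analogue $\tau_{W_\pi,B}$ to truncate the spatial integral. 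Executing this passage rigorously is the delicate part: $\xi\mapsto L_B(\xi,\cdot)$ is not a martingale, so the uniform moment bounds on the Brownian local time and the Garsia--Rodemich--Rumsey estimate advertised in the introduction are the genuinely new technical inputs needed to guarantee that the limit both exists and coincides with the object of Definition~\ref{def2.2}, thereby closing the argument.
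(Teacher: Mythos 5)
Your proposal follows essentially the same route as the paper: Lemma~\ref{pi_rep} for the approximating equation, convergence of $X_\pi$ via Lemma~\ref{lemma_inverse}, of $\cB_\pi$ via Proposition~\ref{prop.bpib}, and of the drift term, with the only cosmetic difference being that you establish that $\cB$ is a Brownian motion independent of $W$ directly by L\'evy's theorem and conditioning, whereas the paper inherits this from the $\cB_\pi$'s. One caveat: the drift convergence is \emph{not} exactly Proposition~\ref{prop.strat} with $g(x,u)=e^{-u}$, since there only $\dot W_\pi$ is approximated while $L_B(\xi,S_W(\cdot))$ keeps the true $W$; in the approximating equation $W$ is replaced by $W_\pi$ inside $L_B$, $S$, $T^{-1}$ and the exponential as well, which is why the paper needs the stronger Proposition~\ref{prop.convg}, proved separately in Section~\ref{sec.convg}, and obtains the limit only along a specially chosen sequence of partitions $\pi(\delta)$ satisfying \eqref{cond.pidel} rather than for arbitrary partitions with $|\pi|\to0$. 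Since you flag this passage as the delicate step requiring the local-time moment bounds and Garsia--Rodemich--Rumsey, the gap is one of attribution rather than of substance.
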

\noindent In fact, Theorem~\ref{thm.wext} claims a bit more than just weak existence. It states  that any Brox diffusion given by the It\^o-McKean representation \eqref{sol-rep} is a weak solution to the equation \eqref{equation}.  In addition, the Brownian motion $\cB$ appeared in the equation is given explicitly by the equation \eref{eqn.cb}.

As an application of our method, we can easily obtain the following It\^o formula whose proof is provided in Section \ref{sec:Ito}.
\begin{theorem}[It\^o formula]\label{thm.ito}
Let $(X,\cB)$ be a weak solution to~\eqref{equation}. Let $f:\RR\times\RR\to \RR$ be a deterministic continuous function such that
\begin{itemize}
    \item For every $x$, the map $u\mapsto f(x,u)$ is continuously differentiable
    \item $f$ and $\partial_u f$ satisfy the conditions \eqref{cond.g1} and \eqref{cond.g2}.
 \end{itemize}
 We define the function $F(x)=\int_0^x f(y,W(y))dy+F(0)$, where $F(0)$ is some constant.
Then, with probability one,
  \begin{multline*}
    F(X(t))=F(0)+\int_0^t f(X(s), W(X(s))d\mathcal{B}(s)+\frac12\int_0^t \partial_xf(X(s),W(X(s)))ds
    \\-\frac12 \int_{-\infty}^\infty f(x,W(x))L_X(t,x)W(d^ox)
    +\frac12 \int_{-\infty}^\infty \partial_u f(x,W(x))L_X(t,x)W(d^ox)\,.
  \end{multline*}
\end{theorem}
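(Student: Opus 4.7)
The plan is to approximate $W$ by its polygonal interpolation $W_\pi$ as in Proposition~\ref{prop.strat}, apply the classical It\^o formula in the resulting smooth setting, and pass to the limit using Proposition~\ref{prop.strat} together with stability of the It\^o-McKean representation under approximation of $W$. Since $(X,\cB)$ is a weak solution, it admits the representation $X=S_W^{-1}\circ B\circ T^{-1}_{W,B}$ for some Brownian motion $B$ independent of $W$. Replacing $W$ by $W_\pi$, define $X^\pi:=S_{W_\pi}^{-1}\circ B\circ T^{-1}_{W_\pi,B}$; since $\dot W_\pi$ is bounded and piecewise constant, $X^\pi$ is an ordinary diffusion satisfying
\[
dX^\pi(t)=-\tfrac12\dot W_\pi(X^\pi(t))\,dt+d\cB^\pi(t),
\]
where $\cB^\pi$ is obtained from \eqref{eqn.cb} with $W$ replaced by $W_\pi$.

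Set $F_\pi(x):=\int_0^x f(y,W_\pi(y))\,dy+F(0)$; because $W_\pi$ is Lipschitz, $F_\pi\in C^{1,1}$ with
\[
F_\pi'(x)=f(x,W_\pi(x)),\qquad F_\pi''(x)=\partial_x f(x,W_\pi(x))+\partial_u f(x,W_\pi(x))\dot W_\pi(x).
\]
Applying the classical It\^o formula to $F_\pi(X^\pi(t))$ and converting the resulting $ds$-integrals containing $\dot W_\pi(X^\pi(s))$ into spatial integrals against $L_{X^\pi}(t,\cdot)$ via the occupation-time density formula yields
\begin{align*}
F_\pi(X^\pi(t))-F(0)&=\int_0^t f(X^\pi(s),W_\pi(X^\pi(s)))\,d\cB^\pi(s)+\frac12\int_0^t\partial_x f(X^\pi(s),W_\pi(X^\pi(s)))\,ds\\
&\quad-\frac12\int_\RR f(x,W_\pi(x))L_{X^\pi}(t,x)\dot W_\pi(x)\,dx\\
&\quad+\frac12\int_\RR \partial_u f(x,W_\pi(x))L_{X^\pi}(t,x)\dot W_\pi(x)\,dx.
\end{align*}

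Passing to the limit $|\pi|\to 0$ is the heart of the proof. The left hand side converges to $F(X(t))$ since $W_\pi\to W$ uniformly on compacts and $X^\pi\to X$ by continuity of the It\^o-McKean representation ($S_{W_\pi}\to S_W$ and $T_{W_\pi,B}\to T_{W,B}$ locally uniformly, and similarly for their inverses). The martingale term converges via the It\^o isometry, and the $\partial_x f$ term by dominated convergence. For the two singular spatial integrals, one rewrites them using the identity $L_{X^\pi}(t,x)=e^{-W_\pi(x)}L_B(T^{-1}_{W_\pi,B}(t),S_{W_\pi}(x))$ and matches them with Proposition~\ref{prop.strat} applied to $g(x,u)=f(x,u)e^{-u}$ and $g(x,u)=\partial_u f(x,u)e^{-u}$ respectively; both auxiliary functions satisfy \eqref{cond.g1} and \eqref{cond.g2}. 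Since Proposition~\ref{prop.strat} as stated has $W$ in the integrand but $W_\pi$ only in the driver, one splits the approximation into that step plus a remainder whose $L^2$-norm is bounded by $\|W_\pi-W\|_{\infty,K}$ times the total variation of $W_\pi$ on $K$, where $K=[-\tau_{W,B}(\xi),\tau_{W,B}(\xi)]$ is the almost surely compact spatial window provided by Lemma~\ref{exist_G}.

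The principal obstacle is this joint control: $W_\pi$ appears simultaneously in the integrand, in the differentiator $\dot W_\pi$ whose sup-norm diverges like $|\pi|^{-1/2}$, and through $X^\pi$ inside the local time. The remedy is that on each fixed compact $K$ the total variation of $W_\pi$ remains bounded uniformly in $\pi$ (controlled by $\|W\|_{\lambda,K}$ for $\lambda<1/2$), so the singular spatial integrals stay $O(1)$. Combined with the exponential moment bounds \eqref{cond.g1}--\eqref{cond.g2}, the almost sure finiteness of $\tau_{W,B}(\xi)$, and the moment estimates on $L_B$ derived earlier in the paper via Garsia-Rodemich-Rumsey, this yields uniform-in-$\pi$ $L^2$ control of the remainder and completes the passage to the limit.
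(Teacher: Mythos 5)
Your overall strategy (polygonal approximation $W_\pi$, classical/time-changed It\^o formula for $F_\pi(X^\pi)$, occupation-time identity, then pass to the limit) is exactly the paper's, and your intermediate formula matches \eqref{ito.Xpi}. The gap is in the limit passage for the two singular spatial integrals. You reduce them to Proposition~\ref{prop.strat} plus a remainder bounded by $\|W_\pi-W\|_{\infty,K}$ times the total variation of $W_\pi$ on $K$, and you assert that this total variation "remains bounded uniformly in $\pi$, controlled by $\|W\|_{\lambda,K}$ for $\lambda<1/2$." That is false: for a partition of mesh $|\pi|$ one has
\begin{equation*}
\int_K|\dot W_\pi(x)|\,dx=\sum_k|W(x_{k+1})-W(x_k)|\;\lesssim\;\|W\|_{\lambda,K}\sum_k\Delta_k^{\lambda}\;\asymp\;|K|\,|\pi|^{\lambda-1}\longrightarrow\infty,
\end{equation*}
since $\lambda<1/2<1$; Brownian motion has infinite variation, and $W_\pi$ inherits this in the limit. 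Consequently your remainder is only $O(|\pi|^{\lambda}\cdot|\pi|^{\lambda-1})=O(|\pi|^{2\lambda-1})$, which does not tend to $0$ for any admissible $\lambda<1/2$. A deterministic ``sup-difference times variation'' bound cannot close this step; one must exploit probabilistic cancellation between the increments on different subintervals.

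This is precisely what Proposition~\ref{prop.convg} is for: it treats the integral with $W_\pi$ appearing simultaneously in the integrand, in $S_{W_\pi}$ inside the local time, and as the driver $\dot W_\pi$, and its proof rests on the martingale-type decomposition $I_1+\dots+I_4$ of Section~\ref{sec.convg} together with the product moment estimate \eqref{ineq.Lxyk} for local-time increments over disjoint intervals (the genuinely new input of the paper). Note also that Proposition~\ref{prop.convg} only gives convergence along specially chosen partitions $\pi(\delta)$ satisfying \eqref{cond.pidel}, not along arbitrary partitions with $|\pi|\to0$; the paper's proof of Theorem~\ref{thm.ito} accordingly applies it with $g(x,u)=f(x,u)e^{-u}$ and $g(x,u)=\partial_uf(x,u)e^{-u}$ along such $\pi(\delta)$. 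Replace your Proposition~\ref{prop.strat}-plus-remainder step by an appeal to Proposition~\ref{prop.convg} (or reproduce its decomposition) and the rest of your argument — convergence of $F_\pi(X^\pi(t))$, of the $\partial_xf$ term, and of the stochastic integral via localization, It\^o isometry and uniform integrability — goes through as in the paper.
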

\noindent An immediate corollary  is the following
\begin{corollary}[It\^o formula]\label{thm.ito.det}
Let $(X,\cB)$ be a weak solution to~\eqref{equation}.
Let $F:[0, \infty)\times \RR\to \RR$ be a measurable deterministic
function    which is continuously differentiable in $t$ and twice continuously differentiable in $x$.
Then, with probability one,
  \begin{align*}
    F(t, X(t))&=F(0, 0)+\int_0^t \partial _s F(s, X(s))ds+\int_0^t \partial _x F(s, X(s))d\mathcal{B}(s)
    \\&\quad+\frac12\int_0^t \partial_{xx} F(s,X(s))ds
    -\frac12 \int_{-\infty}^\infty \partial_x F(s, x) L_X(t,x)W(d^ox)\,.
  \end{align*}
\end{corollary}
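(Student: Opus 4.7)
The plan is to deduce Corollary \ref{thm.ito.det} from Theorem \ref{thm.ito} in two stages: first establish the case of a time-independent $F$ by specializing $f$ in Theorem \ref{thm.ito}, then handle time dependence by a classical partition/telescoping argument.

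For the first stage, suppose $F(t,x)=G(x)$ with $G\in C^2$. Define $f(x,u):=G'(x)$, which trivially does not depend on $u$, and hence satisfies $\partial_u f\equiv 0$. Since $F$ is $C^2$, $f$ is locally H\"older of any order in $x$ and satisfies the growth bounds \eqref{cond.g1}--\eqref{cond.g2} after a standard cut-off or localization argument (stopping when $|X|$ exits a compact set). The primitive appearing in Theorem \ref{thm.ito} is then $\int_0^x f(y,W(y))\,dy+G(0)=G(x)$, so Theorem \ref{thm.ito} produces
\begin{equation*}
    G(X(t))=G(0)+\int_0^t G'(X(s))\,d\cB(s)+\tfrac12\int_0^t G''(X(s))\,ds-\tfrac12\int_{-\infty}^\infty G'(x)L_X(t,x)W(d^ox),
\end{equation*}
since the $\partial_u f$ term vanishes. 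This is exactly the formula of Corollary \ref{thm.ito.det} when $F(s,x)=G(x)$.

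For the general case, fix $t>0$ and a partition $\pi:0=s_0<s_1<\cdots<s_n=t$ with mesh $|\pi|\to0$, and split
\begin{equation*}
    F(t,X(t))-F(0,0)=\sum_{i}\bigl[F(s_{i+1},X(s_{i+1}))-F(s_i,X(s_{i+1}))\bigr]+\sum_{i}\bigl[F(s_i,X(s_{i+1}))-F(s_i,X(s_i))\bigr].
\end{equation*}
The first sum equals $\sum_i\int_{s_i}^{s_{i+1}}\partial_s F(s,X(s_{i+1}))\,ds$, which converges to $\int_0^t\partial_s F(s,X(s))\,ds$ by uniform continuity of $X$ and $\partial_s F$ on compacts. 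For the second sum, apply the time-frozen Itô formula from stage one with $G(x):=F(s_i,x)$ on the interval $[s_i,s_{i+1}]$ to obtain
\begin{equation*}
    F(s_i,X(s_{i+1}))-F(s_i,X(s_i))=\int_{s_i}^{s_{i+1}}\partial_x F(s_i,X(s))\,d\cB(s)+\tfrac12\int_{s_i}^{s_{i+1}}\partial_{xx}F(s_i,X(s))\,ds-\tfrac12\Delta_i\Phi,
\end{equation*}
where $\Delta_i\Phi:=\int_{-\infty}^\infty\partial_x F(s_i,x)\bigl[L_X(s_{i+1},x)-L_X(s_i,x)\bigr]W(d^ox)$. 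The first two pieces converge to $\int_0^t\partial_x F(s,X(s))\,d\cB(s)$ and $\tfrac12\int_0^t\partial_{xx}F(s,X(s))\,ds$ by standard Riemann-sum arguments for the Itô integral and Lebesgue integral, respectively.

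The principal obstacle is the convergence of $\sum_i\Delta_i\Phi$ to the Stratonovich-type local-time term appearing in the statement. Using the identity $L_X(t,x)=e^{-W(x)}L_B(T_{W,B}^{-1}(t),S_W(x))$, each $\Delta_i\Phi$ is a difference of integrals of the type analyzed in Lemma \ref{prop.Gxiy}, now with the integrand $\partial_x F(s_i,x)e^{-W(x)}$ in place of $g(x,W(x))$. Joint continuity of these integrals in $(\xi,a)$ from Lemma \ref{prop.Gxiy}, together with continuity of $t\mapsto T_{W,B}^{-1}(t)$ and uniform continuity of $s\mapsto \partial_x F(s,\cdot)$ on compacts, allows one to replace $\partial_x F(s_i,\cdot)$ by $\partial_x F(s,\cdot)$ on each sub-interval. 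A fourth-moment estimate on $\Delta_i\Phi$ analogous to the martingale moment bound in the proof of Lemma \ref{prop.Gxiy} (with $|s_{i+1}-s_i|$ replacing $|\xi_2-\xi_1|$) confirms that the error terms vanish in $L^2$ as $|\pi|\to0$, yielding the desired limit and completing the proof.
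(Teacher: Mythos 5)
Your first stage is precisely the paper's (implicit) derivation: the paper offers no separate proof of this corollary, presenting it as immediate from Theorem~\ref{thm.ito}, and indeed for time-independent $F(t,x)=G(x)$ the choice $f(x,u)=G'(x)$ satisfies \eqref{cond.g1}--\eqref{cond.g2} automatically (no cut-off is needed, since those bounds are only required uniformly over compact sets in $x$ and $f$ does not depend on $u$), the term involving $\partial_u f$ vanishes, and the stated formula drops out. Up to that point you and the paper agree.

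The second stage, however, contains a genuine gap. The telescoped local-time sums
$\sum_i \Delta_i\Phi=\sum_i\int_{\RR}\partial_xF(s_i,x)\bigl[L_X(s_{i+1},x)-L_X(s_i,x)\bigr]W(d^ox)$
do not converge to a single spatial integral of the form $\int_{\RR}\partial_xF(\,\cdot\,,x)L_X(t,x)W(d^ox)$: the sum telescopes only when $\partial_xF$ is independent of its first argument. Its natural limit is the time--space object $\int_{\RR}\bigl(\int_0^t\partial_xF(s,x)\,d_sL_X(s,x)\bigr)W(d^ox)$, i.e.\ the rigorous version of $\int_0^t\partial_xF(s,X(s))\dot W(X(s))\,ds$, which is not among the integrals defined in Definition~\ref{def2.2} or controlled by Lemma~\ref{prop.Gxiy}; defining it and proving the convergence would require redoing the estimates of Section~\ref{sec.convg} for time-dependent integrands. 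Your phrase about replacing $\partial_xF(s_i,\cdot)$ by $\partial_xF(s,\cdot)$ on each sub-interval conflates these two objects. In fact the identity you are aiming at (reading the dangling $s$ in the corollary's last term as $t$) is already false for $F(s,x)=sx$, where the drift contribution is $-\frac12\int_0^t s\,\dot W(X(s))\,ds$ rather than $-\frac{t}{2}\int_0^t\dot W(X(s))\,ds$. So your argument is complete and correct exactly for time-independent $F$ --- which is all that genuinely follows ``immediately'' from Theorem~\ref{thm.ito} --- while the time-dependent case requires a differently formulated local-time term and new estimates that neither you nor the paper supplies.
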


\smallskip
The rest of this paper is organized as follows. In the next section, we provide some preliminaries and show how Proposition \ref{thm.wext} can be derived. Theorem \ref{thm.ito} is proved in Section \ref{sec:Ito}. The proof of Theorem \ref{thm.sext} is given in Section \ref{sec:existence_of_strong_solution}.
The proof of Proposition~\ref{prop.strat} is provided in
Section~\ref{sec:stratonovich_integral_II}. Proofs of some further technical results (described in Section \ref{sec:prelim}) are provided in Sections \ref{sec.pf3.4}, \ref{sec:pf.moment} and \ref{sec.convg}.

\section{Preliminaries and proof of Proposition \ref{thm.wext}}
\label{sec:prelim}
We present in the current section some necessary results which will be used several times throughout our paper. Since Proposition~\ref{thm.wext} follows directly from these results, we provide its proof at the end of the section.  

Let $\mathcal{\widetilde F}=\{\mathcal {\widetilde F}_t\}_{t\geq 0}$ be a filtration under which $B$ is a Brownian motion. We assume that the filtration
$\mathcal{\widetilde F}$ satisfies the usual conditions for a filtration; namely,
 it is right-continuous
and $\mathcal{\widetilde F}_0$ contains all the null sets. In what follows, $\mathcal{\widetilde F}$ is usually chosen to be $\cF^{B,W}$.

An \textit{$\{\mathcal{\widetilde F}_t\}$-time-change} is a c\`adl\`ag, increasing family of $\{\mathcal{\widetilde F}_t\}$-stopping times. It is said to be \textit{finite} if each stopping time is finite almost surely, and \textit{continuous} if it is almost surely continuous with respect to time.
Let $T=\{T(t):t\ge 0\}$ be a finite $\{\mathcal{\widetilde F}_t\}$-time change and consider the time-changed filtration
$\{\mathcal{\widetilde F}_{T_t}\}_{t\geq 0}$. The right-continuity of $\{\mathcal{\widetilde F}_t\}$ and $\{T_t\}$ imply that $\{\mathcal{\widetilde F}_{T_t}\}_{t\geq 0}$ satisfies the usual conditions. 
Moreover, the
time-changed process $\{B\circ T(t)\}$ is an $\{\mathcal {\widetilde F}_{T_t}\}$-semimartingale (see \cite{jacod}*{Corollary 10.12}).
As a consequence, one can define the It\^o
integral of the form $\int_0^t g(B\circ T(s) )dB\circ T(s) $.  In the following proposition we
gather some useful facts.
%
\begin{proposition}
\label{prop:3.1}
     Let $f$ be a function in  $C^ 2(\RR)$, the set of continuous functions with continuous derivatives up to second order.
  Let $T=(T(t),t\ge 0)$ be a continuous finite time change.
   Then, with probability one, for all $t\ge0$, the following identities hold
%
    \begin{equation}\label{id.ito}
        f(B\circ T(t))=f(B\circ T(0))+\int_{T(0)}^{T(t)}f'(B(s))dB(s)+\frac12\int_{T(0)}^{T(t)}f''(B(s))ds \,,
    \end{equation}
    \begin{equation}\label{id.ito2}
        \int_{T(0)}^{T(t)}f'(B(u))dB(u)=\int_0^t f'(B\circ T(s)) dB\circ T(s)\,,
    \end{equation}
    \begin{equation}\label{id.ito3}
        \int_{T(0)}^{T(t)}f''(B(u))du =\int_0^t f''(B\circ T(s)) dT(s) \,.
    \end{equation}
    Finally, the process $t\mapsto\int_0^t f'(B\circ T(s))dB\circ T(s) $ is a semimartingale with respect to the filtration
    $\{\mathcal {\widetilde F}_{T_t}\}_{t\geq 0}$,
and
its quadratic variation is given by
    \begin{equation}\label{id.qvbt}
        \langle\int_0^t f'(B\circ T(s))dB\circ T(s)\rangle= \int_0^t |f'(B\circ T(s))|^2dT(s)\,.
    \end{equation}
\end{proposition}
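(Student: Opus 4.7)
The plan is to derive everything from the classical Itô formula for $B$ together with the general time-change theorem for stochastic integrals. Since $B$ is a continuous $\widetilde{\mathcal F}$-semimartingale and $f\in C^2(\RR)$, the standard Itô formula yields, almost surely for every $u\ge 0$,
\begin{equation*}
f(B(u))=f(B(0))+\int_0^u f'(B(s))dB(s)+\frac12\int_0^u f''(B(s))ds.
\end{equation*}
Since $T(0)$ and $T(t)$ are finite $\widetilde{\mathcal F}$-stopping times, I would evaluate the preceding identity at $u=T(t)$ and $u=T(0)$ and subtract; continuity of $T$ together with standard localization (stop when $|B|$ or $T$ exits large compacts) reduces the almost-sure statement to one for fixed $t$ and then upgrades it to hold simultaneously for all $t\ge 0$ on a single null set. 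This produces \eqref{id.ito}.

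Next I would establish \eqref{id.ito2} and \eqref{id.ito3} using the time-change formulas for stochastic and ordinary integrals. The stochastic part is precisely the content of Jacod's time-change theorem (already cited in the paragraph preceding the proposition, namely \cite{jacod}*{Corollary 10.12}): for a continuous finite $\widetilde{\mathcal F}$-time change $T$ and a continuous adapted integrand, one has
\begin{equation*}
\int_{T(0)}^{T(t)} f'(B(s))dB(s)=\int_0^t f'(B\circ T(s))d(B\circ T)(s),
\end{equation*}
which is exactly \eqref{id.ito2}. For the absolutely continuous term, \eqref{id.ito3} is a pathwise Lebesgue--Stieltjes change of variables: almost surely $s\mapsto T(s)$ is continuous and non-decreasing, so applying the substitution formula to the continuous integrand $f''\circ B$ immediately gives $\int_{T(0)}^{T(t)} f''(B(u))du=\int_0^t f''(B\circ T(s))dT(s)$.

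Finally, for the semimartingale property and the quadratic variation identity \eqref{id.qvbt}, I would appeal once more to \cite{jacod}*{Corollary 10.12}: the time-changed process $B\circ T$ is an $\{\widetilde{\mathcal F}_{T_t}\}$-semimartingale, hence $\int_0^\cdot f'(B\circ T(s))d(B\circ T)(s)$ is itself an $\{\widetilde{\mathcal F}_{T_t}\}$-semimartingale, and its bracket is obtained by time-changing the bracket of the original Itô integral, which is $\int_0^\cdot |f'(B(s))|^2 ds$. Applying \eqref{id.ito3} with $|f'|^2$ in place of $f''$ (continuity of $|f'|^2\circ B$ is all the pathwise change of variables requires) yields $\int_0^t |f'(B\circ T(s))|^2 dT(s)$, as claimed. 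I do not anticipate a real obstacle: the only subtlety is the customary one of pushing an almost-sure identity valid for each fixed $t$ to one valid simultaneously for all $t$, which is handled by the joint continuity in $t$ of both sides together with continuity of $T$; the content of the proposition is essentially a repackaging of Jacod's time-change theorem specialized to $C^2$ functionals of Brownian motion.
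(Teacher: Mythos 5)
Your argument is correct and follows essentially the same route as the paper, which in fact offers no proof of Proposition~\ref{prop:3.1} at all: it delegates \eqref{id.ito} and \eqref{id.qvbt} to \cite{kobay}*{Theorem 3.3} and \eqref{id.ito2}--\eqref{id.ito3} to \cite{jacod}*{Proposition 10.21}, i.e.\ the same time-change machinery you invoke. Your one genuine (and welcome) simplification is the derivation of \eqref{id.ito}: since the classical It\^o formula holds almost surely simultaneously for all $u$ and both sides are continuous in $u$, evaluating at the random times $T(0)$ and $T(t)$ and subtracting gives \eqref{id.ito} pathwise, with no need for Kobayashi's general time-changed It\^o formula for semimartingales with jumps. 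One small bookkeeping point: the integral time-change identity you use for \eqref{id.ito2} is \cite{jacod}*{Proposition 10.21}, whereas Corollary 10.12 of \cite{jacod} is the statement that $B\circ T$ is an $\{\mathcal{\widetilde F}_{T_t}\}$-semimartingale (which you also, correctly, rely on for \eqref{id.qvbt}).
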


In fact, in \cite{kobay}, the author has obtained time-changed It\^o formula (such as \eqref{id.ito}) for semimartingales possibly with jumps. However, we do not need such general result in the current paper.  We refer the reader to \cite{kobay}*{Theorem 3.3} for a justification of \eqref{id.ito} and \eqref{id.qvbt}. Identities \eqref{id.ito2} and \eqref{id.ito3} follow from \cite{jacod}*{Proposition 10.21}, see also in \cite{kobay}.

Throughout the paper, we will approximate $W(x)$ by its polygonal approximations. Since $W(x)$ is defined for all $x\in \RR$, we now partition the whole line $\RR$.
Let $\pi$ be any partition with nodes $\{x_i\in\RR :
x_i<x_{i+1} \ \ \forall i\in\ZZ\}$. Then the polygonal approximation of $W$
associated with this partition, denoted by $W_\pi$, is the piecewise function such that for every $i\in\ZZ$
\begin{equation}
\label{polygW}
W_\pi (x)=W(x_i)+\dfrac{W(x_{i+1})-W(x_{i})}{x_{i+1}-x_i}
\left(x-x_i\right)\,, \quad x_{i}\le x<x_{i+1}\,.
\end{equation}

Fix arbitrary Brownian motion $B$ independent of $W$. Then, for any
polygonal  approximation $W_\pi$ of $W$,  we can define $X_{\pi}$ via an analogue to the It\^o-McKean representation \eref{sol-rep}:
\begin{equation}
X_\pi(t)=S_{W_{\pi}}^{-1} \circ B\circ T_{W_{\pi}, B}^{-1}(t)\,, \quad 0\le
t<\infty\,,\label{sol-rep-pi}
\end{equation}
where
\begin{equation}
S_{W_\pi} (x)=\int_0^x e^{W_\pi(z) } dz,\ \label{s_n}\,, \quad 0\le
t<\infty,
\end{equation}
and
\begin{equation}
T_{W_\pi, B}(t)=\int_0^t e^{-2W_\pi\circ S_{W_\pi} ^{-1}(B(s) )} ds\,, \quad 0\le
t<\infty.
\label{t_n}
\end{equation}
We also denote
\begin{equation}\label{eqn.cbpi}
\mathcal{B}_\pi(t)=\int_0^t e^{-W_\pi(X_\pi(s) )}  d B\circ T^{-1}_{W_\pi, B}(s)\,,
\end{equation}

Since $W_\pi$ is piecewise differentiable it follows from Proposition \ref{prop:3.1} that
\begin{lemma}
\label{pi_rep}
Let $X_\pi(t)$ be defined by
\eref{sol-rep-pi}-\eref{t_n} and $\cB_\pi(t)$ be defined in \eqref{eqn.cbpi}. Then $\cB_\pi$ is a Brownian motion
with
respect to the time-changed filtration $\{\mathcal{F}^{B,W}_{T^{-1}_{W_\pi, B}(t)}\}_{t\geq 0}$. In addition, $\cB_\pi$ is
 independent of $W$
and $X_\pi$ satisfies
\begin{equation}
 X_\pi(t)= -\frac12\int_0^t \dot W_\pi(X_\pi(s)) ds +
 \mathcal{B}_\pi(t)\,.\label{e.2.16}
\end{equation}
\end{lemma}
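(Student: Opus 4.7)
The strategy is to apply It\^o's formula to the composition $S_{W_\pi}^{-1}\circ B$ and then perform the time change $s\mapsto T_{W_\pi,B}^{-1}(t)$ using the identities gathered in Proposition~\ref{prop:3.1}. First I would check the regularity needed for It\^o's formula: because $W_\pi$ is continuous and piecewise linear, $S_{W_\pi}$ is a $C^1$ diffeomorphism of $\RR$ whose derivative $e^{W_\pi}$ is piecewise $C^1$. Hence $S_{W_\pi}^{-1}$ is $C^1$ with
\begin{equation*}
    (S_{W_\pi}^{-1})'(y)=e^{-W_\pi(S_{W_\pi}^{-1}(y))},\qquad
    (S_{W_\pi}^{-1})''(y)=-\dot W_\pi(S_{W_\pi}^{-1}(y))\,e^{-2W_\pi(S_{W_\pi}^{-1}(y))},
\end{equation*}
the latter holding a.e.\ with a locally bounded density. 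This is enough regularity to apply the standard It\^o formula (the derivative is locally Lipschitz, so one can either smooth $W_\pi$ and pass to the limit on compacts, or invoke the It\^o--Tanaka formula; the countable set of breakpoints is irrelevant since $B$ spends zero Lebesgue time at any fixed point).

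Applying It\^o's formula up to the stopping time $T_{W_\pi,B}^{-1}(t)$ and using $S_{W_\pi}^{-1}(0)=0$ gives, for every $t\ge 0$,
\begin{multline*}
    X_\pi(t)=S_{W_\pi}^{-1}(B\circ T_{W_\pi,B}^{-1}(t))
    \\=\int_0^{T_{W_\pi,B}^{-1}(t)}\! e^{-W_\pi(S_{W_\pi}^{-1}(B(s)))}\,dB(s)
    -\frac12\int_0^{T_{W_\pi,B}^{-1}(t)}\!\dot W_\pi(S_{W_\pi}^{-1}(B(s)))\,e^{-2W_\pi(S_{W_\pi}^{-1}(B(s)))}\,ds.
\end{multline*}
Now I would apply \eqref{id.ito2} and \eqref{id.ito3} from Proposition~\ref{prop:3.1} with the time change $T=T_{W_\pi,B}^{-1}$. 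The stochastic integral becomes exactly $\mathcal{B}_\pi(t)$ defined in \eqref{eqn.cbpi}, since $X_\pi(u)=S_{W_\pi}^{-1}(B\circ T_{W_\pi,B}^{-1}(u))$. For the Lebesgue integral, the relation $dT_{W_\pi,B}(s)=e^{-2W_\pi(S_{W_\pi}^{-1}(B(s)))}ds$ implies $dT_{W_\pi,B}^{-1}(u)=e^{2W_\pi(X_\pi(u))}du$, so the factor $e^{-2W_\pi(X_\pi(u))}$ cancels and we obtain $\int_0^t\dot W_\pi(X_\pi(u))du$. Collecting these two identifications yields \eqref{e.2.16}.

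To see that $\mathcal{B}_\pi$ is a Brownian motion with respect to $\{\cF^{B,W}_{T_{W_\pi,B}^{-1}(t)}\}_{t\ge0}$, I would invoke L\'evy's characterization. By Proposition~\ref{prop:3.1}, $\mathcal{B}_\pi$ is a continuous martingale in this filtration, and using \eqref{id.qvbt} together with the identity $dT_{W_\pi,B}^{-1}(u)=e^{2W_\pi(X_\pi(u))}du$ its quadratic variation is
\begin{equation*}
    \langle \mathcal{B}_\pi\rangle_t=\int_0^t e^{-2W_\pi(X_\pi(u))}\,dT_{W_\pi,B}^{-1}(u)=\int_0^t du=t.
\end{equation*}
Hence $\mathcal{B}_\pi$ is a Brownian motion. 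Finally, for independence of $\mathcal{B}_\pi$ from $W$, the step I expect to require the most care is the conditional argument: since $B$ is independent of $W$, the entire construction of $\mathcal{B}_\pi$ from $B$ (with $W$ frozen) can be carried out conditionally on the whole path of $W$. The same L\'evy-characterization computation above goes through conditionally on $\sigma(W)$, because all quantities involving $W_\pi$ are then deterministic. Thus the conditional law of $\mathcal{B}_\pi$ given $W$ is Wiener measure, which does not depend on $W$, and therefore $\mathcal{B}_\pi$ is independent of $W$. This is the main technical subtlety, but it is standard once the conditional picture is set up cleanly.
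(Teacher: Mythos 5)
Your proposal is correct, and for the identity \eqref{e.2.16} and the L\'evy-characterization step it follows essentially the same route as the paper: apply the time-changed It\^o formula of Proposition~\ref{prop:3.1} to $S_{W_\pi}^{-1}\circ B$, identify the stochastic integral with $\cB_\pi$ via \eqref{id.ito2}, cancel the factor $e^{\pm 2W_\pi(X_\pi)}$ in the drift using $dT_{W_\pi,B}^{-1}(u)=e^{2W_\pi(X_\pi(u))}du$, and compute $\langle\cB_\pi\rangle_t=t$. Two points of genuine divergence are worth noting. First, your remark that $S_{W_\pi}^{-1}$ is only piecewise $C^2$ (its second derivative jumps at the images of the partition nodes), so that one must either mollify or invoke It\^o--Tanaka, is a refinement the paper silently skips: Proposition~\ref{prop:3.1} as stated requires $f\in C^2$, so your patch is actually needed. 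Second, for the independence of $\cB_\pi$ from $W$ you argue that the L\'evy computation goes through under the regular conditional probability given $\sigma(W)$, so the conditional law of $\cB_\pi$ is Wiener measure and independence follows; the paper instead asserts that $\cB_\pi$ and $W$ are "both Gaussian" and reduces independence to showing $\EE[\cB_\pi(t)W(x)]=0$ via optional sampling. Your conditional argument is the more robust of the two, since joint Gaussianity of the pair $(\cB_\pi,W)$ (which is what the uncorrelated-implies-independent reduction really requires) is not evident from the construction, whereas the conditional L\'evy characterization uses only that $B$ remains an $\cF^{B,W}$-Brownian motion once $W$ is frozen — a fact the paper itself relies on elsewhere.
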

\begin{proof}
The  It\^o formulas \eqref{id.ito}-\eqref{id.ito3} from~Proposition~\ref{prop:3.1} give
\begin{multline}
X_\pi(t)
=\int_0^t (S_{W_\pi}^{-1})'\circ B\circ T^{-1}_{W_\pi, B}(s) d B\circ T^{-1}_{W_\pi, B}(s)
\\ +\frac12 \int_0^t (S_{W_\pi}^{-1})''\circ B\circ T^{-1}_{W_\pi, B}(s) \frac{d}{ds} T^{-1}_{W_\pi, B}(s)ds\,.\label{e.3.6}
\end{multline}
Note
 that we apply
 Proposition~\ref{prop:3.1} by, first,
fixing a  realization of $W$; we also use the
fact that $B$ is a Brownian motion with respect to $\cF^{B,W}$.
From the definition of  $S_{W_\pi}(x)$, we have
\[
\frac{d}{dx} S_{W_\pi} ^{-1}(x)=e^{-W_\pi(S_{W_\pi}^{-1}(x))}\,,
\]
\[
\frac{d^2}{dx^2} S_{W_\pi} ^{-1}(x)=-e^{-2W_\pi(S_{W_\pi}^{-1}(x))}\dot W_\pi(S_{W_\pi}^{-1} (x))\,.
\]
Thus
\[
\left[ \frac{d}{dx} S_{W_\pi} ^{-1}\right]\circ B\circ T_{W_\pi, B}^{-1}(s) =e^{-W_\pi(X_\pi(s) )}\,,
\]
\[
\left[\frac{d^2}{dx^2} S_{W_\pi} ^{-1}
\right]\circ B\circ T_{W_\pi, B}^{-1}(s)   =-e^{-2W_\pi(X_\pi(t))}\dot W_\pi(X_\pi(s) )\,.
\]
Similarly, we have
\[
\frac{d}{dt} T_{W_\pi, B}^{-1}(s)= e^{2W_\pi(S_{W_\pi}^{-1}\circ B\circ T_{W_\pi, B}^{-1}(s))}=e^{2W_\pi(X_\pi(s))}\,.
\]
Thus \eref{e.3.6} can be written as
\begin{eqnarray}
X_\pi(t)
&=&\int_0^t e^{-W_\pi(X_\pi(s) )}  d B\circ T^{-1}_{W_\pi, B}(s)\nonumber\\
&&\qquad +\frac12 \int_0^t -e^{-2W_\pi(X_\pi(t))}\dot W_\pi(X_\pi(s) )e^{2W_\pi(X_\pi(s)} ds \nonumber\\
\label{approx_eq1}
&=&\mathcal{B}_\pi(t)-\frac12 \int_0^t  \dot W_\pi(X_\pi(s) )  ds\,.
\end{eqnarray}

From  Doob's optional stopping (sampling)
theorem it is easy to see that
 $(\mathcal{B}_\pi(t),t\ge0)$ is a local
 martingale
 with respect to $\{\mathcal{F}^{B,W}_{T^{-1}_{W_\pi, B}(t)}\}_{t\geq 0}$.
Moreover, its  quadratic variation is
\begin{align}
\label{eq3_07_2}
\int_0^t e^{-2W_\pi(X_\pi(s) )} \frac{d}{ds}  T^{-1}_{W_\pi, B}(s)ds= \int_0^t e^{-2W_\pi(X_\pi(s) )} e^{2W_\pi(X_\pi(s))}ds= t\,.
\end{align}
Thus by L\'evy's characterization theorem
  $\mathcal{B}_\pi(t)$ is a Brownian motion with respect to $\{\mathcal{F}^{B,W}_{T^{-1}_{W_\pi, B}(t)}\}_{t\geq 0}$.

To complete the proof of Lemma~\ref{pi_rep}, it remains to show that $\cB_\pi$ and $W$ are independent processes. Since both of them are Gaussian, it suffices to show that they are uncorrelated. Indeed, using \eqref{id.ito2} and \eqref{sol-rep-pi}, we can write
\begin{align*}
    \cB_\pi(t)=\int_0^{T^{-1}_{W_\pi,B}(t)}e^{-W_\pi\circ S^{-1}_{W_\pi} \circ B(u)}dB(u)\,.
\end{align*}
Hence, for every $t\ge0$ and $x\in\RR$, we use the fact that $\cB_{\pi}$ is
$\{\mathcal{F}^{B,W}_{T^{-1}_{W_\pi, B}(t)}\}_{t\geq 0}$-Brownian motion, and the fact that
 $W$ is measurable with respect to $
 \mathcal{F}^{B,W}_{T^{-1}_{W_\pi, B}(0)}
$
 to get
\begin{align*}
    \EE \left[\cB_\pi(t)W(x)\right]=
    \EE \left[\EE \left[\cB_{\pi}(t)
    \left|\mathcal{F}^{B,W}_{T^{-1}_{W_\pi, B}(0)}\right.\right]
 W(x)\right] =
 \EE \left[ \cB_\pi(0)
 W(x)\right]
 =0.
\end{align*}
Hence, we complete the proof of Lemma \ref{pi_rep}.
\end{proof}

\begin{remark}\label{rem.32} (i) Lemma~\ref{pi_rep} implies that
$(X_\pi(t), t\ge 0)$ is the weak solution of the
equation: $$dX_\pi(t)=-\frac12 \dot W_\pi(X_\pi(t))dt+d\widetilde{B}(t),$$
where $\widetilde{B}$ is a Brownian motion independent of $W_{\pi}$.

(ii) The result of Lemma \ref{pi_rep} holds  true when $W_\pi(x)$ is
replaced by any continuously  differentiable function.
\end{remark}
Now Proposition~\ref{thm.wext} follows from \eqref{e.2.16} by shrinking the mesh size $|\pi|$ to 0. This step is verified through the following propositions.
\begin{proposition}\label{prop.bpib} For every $T\ge0$, $\lim_{|\pi|\to0} \EE \sup_{t\le T}|\cB_\pi(t)-\cB(t)|^2=0$.
\end{proposition}
 \begin{proposition}\label{prop.convg}
 Then for every $\delta>0$, there exists a partition $\pi(\delta)$ of  $\RR$ such that for any $T>0$,
    \begin{multline*}
      \lim_{\delta\to0}\sup_{t\leq T}\left|\int_0^t g(X_{\pi(\delta)}(s),W_{\pi(\delta)} \circ X_{\pi(\delta)}(s))\dot{W}_{\pi(\delta)}(X_{\pi(\delta)}(s))ds\right.
      \\\left.-\int_{-\infty}^\infty g(x,W(x))L_X(t,x)W(d^ox) \right|=0,
    \end{multline*} with probability one.
\end{proposition}
\noindent The proofs of the above two propositions are provided in Section \ref{sec.pf3.4} and Section \ref{sec.convg} respectively.
Proposition~\ref{prop.convg} in turn is relied on the following moment estimates for local time of Brownian motion, which are of independent interest.
\begin{proposition}\label{Prop.moment} (i) Let $x,y\in \RR$. For every   $\beta\in[0,1/2]$,
  the following estimates holds
    \begin{equation}
\left|\EE
\left(L_B([\xi,\eta],y)-L_B([\xi,\eta],x)\right)^{2n}\right|
      \le C_{\beta,n}|\eta-\xi|^{n(1-\beta)}|x-y|^{2\beta
      n} \label{ineq.Lxy}\,.
\end{equation}
(ii)  For every $x_1, y_1, \cdots\,, x_k, y_k$ satisfying
 \begin{equation}\label{cond:xyk}
    x_1<y_1\le x_2<y_2\le \cdots\le x_{2n}<y_{2n}\,.
  \end{equation}
and every $\alpha\in[0,1]$ we have
\begin{equation}
\left|\EE \prod_{k=1}^{2n}
\left(L_B([\xi,\eta],y_k)-L_B([\xi,\eta],x_k)\right)\right|
      \le C_{\alpha,n} |\eta-\xi|^{n \alpha}\prod_{k=1}^{2n}|y_k-x_k|^{1- \alpha}\,.
      \label{ineq.Lxyk}
\end{equation}
\end{proposition}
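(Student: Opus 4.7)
My plan for proving Proposition~\ref{Prop.moment} is to establish two endpoint estimates in each part and interpolate between them.

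For part (i), the endpoint $\beta=0$ is immediate from the standard moment bound $\EE L_B([\xi,\eta],z)^{2n}\lesssim |\eta-\xi|^n$ together with the triangle inequality. For the endpoint $\beta=1/2$, I would apply the Tanaka--Meyer formula to decompose
\begin{equation*}
L_B([\xi,\eta],y)-L_B([\xi,\eta],x)=A+M,
\end{equation*}
where $|A|\le 2|y-x|$ pointwise (because $u\mapsto |B_s-u|$ is $1$-Lipschitz) and $M=2\int_\xi^\eta \mathbf{1}_{(x,y]}(B_s)\,dB_s$ is the martingale part. BDG gives $\EE M^{2n}\le C_n\EE\big(\int_\xi^\eta \mathbf{1}_{(x,y]}(B_s)\,ds\big)^n$, the occupation-time formula rewrites the inner integral as $\int_x^y L_B([\xi,\eta],u)\,du\le |y-x|\sup_u L_B([\xi,\eta],u)$, and the classical moment bound $\EE[\sup_u L_B([\xi,\eta],u)]^n\lesssim |\eta-\xi|^{n/2}$ (Brownian scaling plus Barlow--Yor) yields $\EE M^{2n}\lesssim |\eta-\xi|^{n/2}|y-x|^n$. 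Combining the two contributions gives the $\beta=1/2$ endpoint; the elementary inequality $\min(A,B)\le A^{2\beta}B^{1-2\beta}$ for $\beta\in[0,1/2]$ then interpolates.

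For part (ii), the range $\alpha\in[1/2,1]$ follows directly from part (i) via the generalized H\"older inequality $|\EE\prod_k X_k|\le\prod_k\|X_k\|_{L^{2n}}$ applied with exponent $\beta=1-\alpha\in[0,1/2]$. For $\alpha\in[0,1/2]$, I would prove the $\alpha=0$ endpoint
\begin{equation*}
\Big|\EE\prod_{k=1}^{2n}\big(L_B([\xi,\eta],y_k)-L_B([\xi,\eta],x_k)\big)\Big|\le C_n\prod_{k=1}^{2n}|y_k-x_k|
\end{equation*}
and interpolate with the $\alpha=1/2$ bound. For the $\alpha=0$ endpoint, the key tool is the explicit joint-moment formula obtained by conditioning on the time ordering,
\begin{equation*}
\EE\prod_{k=1}^{2n}L_B([\xi,\eta],u_k)=\sum_\sigma\int_{\xi<s_1<\cdots<s_{2n}<\eta}p_{s_1-\xi}(u_{\sigma(1)}-B_\xi)\prod_{k=2}^{2n}p_{s_k-s_{k-1}}(u_{\sigma(k)}-u_{\sigma(k-1)})\,d\vec s.
\end{equation*}
Writing $\EE\prod_k(L(y_k)-L(x_k))$ as the iterated finite difference of this expression in each $u_k$ and applying mean-value estimates to the differences of Gaussian propagators extracts a factor $|y_k-x_k|$ per variable, while the resulting time integrations of the $(s_k-s_{k-1})^{-1/2}$ singularities (from differentiating Gaussians) remain finite by Dirichlet's formula after Brownian rescaling.

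The main obstacle will be rigorously executing this $\alpha=0$ endpoint. Each variable $u_{\sigma(k)}$ appears in up to two adjacent Gaussian propagators, so every finite difference distributes its ``hit'' across those factors and produces a large combinatorial sum over permutations $\sigma$ and hit-distributions. The Gaussian tail factors arising from these derivatives must not cumulatively blow up when consecutive intervals $[x_k,y_k]$ and $[x_{k+1},y_{k+1}]$ are close to one another; this is precisely where the strict ordering $x_1<y_1\le x_2<\cdots<y_{2n}$ in the hypothesis plays its essential role.
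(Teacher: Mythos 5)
Your treatment of part (i) is correct and takes a genuinely different route from the paper. You work pathwise via Tanaka's formula, BDG and the occupation-time identity, whereas the paper computes the moment exactly from the Kac moment formula \eqref{jmoment}, arriving at the closed expression \eqref{id.Lxy} which it then estimates; your argument is more elementary and avoids the combinatorics, while the paper's yields an identity that it reuses for part (ii). One small point: the bounded-variation term $A$ contributes $|y-x|^{2n}$, so the pure $\beta=1/2$ bound $|\eta-\xi|^{n/2}|y-x|^n$ does not follow from the decomposition alone; it holds only after taking the minimum with the $\beta=0$ bound (when $|y-x|\ge|\eta-\xi|^{1/2}$ the $\beta=0$ bound already dominates). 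Since your final step is precisely an interpolation of that minimum, the conclusion \eqref{ineq.Lxy} is still reached, but the intermediate claim should be stated as a two-regime bound. Your H\"older reduction of part (ii) to part (i) for $\alpha\in[1/2,1]$ is also fine.

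The genuine gap is the $\alpha=0$ endpoint of part (ii), which is the technical heart of the proposition and which you identify as ``the main obstacle'' without resolving it. Your strategy there coincides with the paper's: write the product of increments as an iterated finite difference of the Kac formula, push the differences (equivalently, the mixed partial derivatives $\partial_{z_1}\cdots\partial_{z_{2n}}$) onto the Gaussian propagators, and extract one factor $|y_k-x_k|$ per variable. What is then required is that every resulting term
\begin{equation*}
\int_{D_{2n}}\prod_{j=1}^{2n}p^{(k_j)}(s_j-s_{j-1},u_j)\,d\bar s,\qquad k_j\in\{0,1,2\},
\end{equation*}
be bounded by a constant depending only on $n$, \emph{uniformly} in the nonzero spatial arguments $u_j$ (which may be arbitrarily small, since consecutive intervals may abut) and uniformly in $\eta-\xi$. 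Your heuristic --- that the $(s_k-s_{k-1})^{-1/2}$ singularities are integrable by Dirichlet's formula after rescaling --- does not deliver this: whenever two derivatives from neighbouring variables land on the same propagator you get $p''(t,u)=2\partial_tp(t,u)$, whose local singularity at $t=0$ is of order $t^{-3/2}$ and is not integrable by power counting; finiteness, let alone a bound uniform as $u\to0$, can only come from exploiting the factor $e^{-u^2/(2t)}$ in combination with the convolution structure. The paper handles exactly this in its Lemma~\ref{lem.Dm}, showing by a Laplace-transform computation that each such iterated convolution integral is bounded by $1$. Without that lemma (or a substitute of equal strength), the $\alpha=0$ bound \eqref{ineq.Lt1}, and hence \eqref{ineq.Lxyk} for $\alpha<1/2$, is not established.
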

\noindent The proof of the previous proposition is given in Section \ref{sec:pf.moment}.
  \begin{remark} (i) The former inequality \eref{ineq.Lxy} is well known. The above
second estimate \eref{ineq.Lxyk}  is new and quite interesting
itself. Since in our proof of \eref{ineq.Lxyk} we shall obtain  some
results which can be used to prove \eref{ineq.Lxy} easily, we shall
also present a straightforward proof of \eref{ineq.Lxy}.

(ii) From \cite{perkins}, it is known that $L(\xi, x)$ is a semimartingale 
on $x$.  A consequence is that 
$\EE \left(L_B(\xi,y)-L_B(\xi,x)\right)^{2n} 
      \le C_{\beta,n} |x-y|^{ 
      n}$. \eqref{ineq.Lxy} is an extension of this inequality. 
\end{remark}
We will also need the following analytic result.
\begin{lemma}
    \label{lemma_inverse}
    Let $f$ and $f_n$, ($n=1,2,...$) be bijective functions on $\RR$
    which are continuous and strictly increasing.
    Suppose that $f_n(x)$ converges to $f(x)$ for every $x$ in $\RR$.
    Then for any compact
    $A\subset \RR$, $\lim_{n\to\infty}\sup_{y\in A} |f_n^{-1}(y)-f^{-1}(y)| =0.$
  \end{lemma}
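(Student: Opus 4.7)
The plan is a two-step argument that exploits monotonicity to upgrade pointwise information to uniform information. In the first step I would show that $f_n^{-1}(y) \to f^{-1}(y)$ pointwise for every $y \in \RR$. In the second step I would upgrade this pointwise convergence to uniform convergence on compacts using the classical fact that pointwise convergence of a sequence of monotone functions to a continuous monotone limit is automatically uniform on compact sets (P\'olya's theorem).

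For the first step, fix $y \in \RR$ and set $x = f^{-1}(y)$. For any $\varepsilon>0$, strict monotonicity of $f$ gives $f(x-\varepsilon) < y < f(x+\varepsilon)$. By the hypothesis $f_n(x\pm\varepsilon) \to f(x\pm\varepsilon)$, so for all sufficiently large $n$ one has $f_n(x-\varepsilon) < y < f_n(x+\varepsilon)$. Since each $f_n$ is a strictly increasing bijection, applying $f_n^{-1}$ yields $x-\varepsilon < f_n^{-1}(y) < x+\varepsilon$, hence $f_n^{-1}(y) \to f^{-1}(y) = x$.

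For the second step, note that each $f_n^{-1}$ is continuous and strictly increasing (as the inverse of a continuous strictly increasing bijection), and the limit $f^{-1}$ is also continuous and strictly increasing. I would then invoke the following standard fact: if $g_n$ are monotone functions on $\RR$ converging pointwise to a continuous monotone function $g$, then $g_n \to g$ uniformly on every compact set. The short proof (which I would include for self-containment) goes by contradiction: if uniform convergence on a compact $A$ fails, there exist $\varepsilon>0$ and $y_n \in A$ with $|g_n(y_n) - g(y_n)| \ge \varepsilon$; pass to a subsequence so that $y_n \to y^*$, and use monotonicity to sandwich $g_n(y_n)$ between $g_n(y^* - \delta)$ and $g_n(y^* + \delta)$ for small $\delta>0$, which by pointwise convergence lie within $o(1)$ of $g(y^* \pm \delta)$; continuity of $g$ then forces $g_n(y_n) \to g(y^*)$, contradicting the lower bound $\varepsilon$.

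There is really no serious obstacle here; the only subtlety is making sure the monotonicity and bijectivity of $f_n^{-1}$ are genuinely in force (which is immediate from the hypotheses) so that the sandwich argument in Step~2 applies. Since the statement is purely deterministic and analytic, no probabilistic machinery is needed.
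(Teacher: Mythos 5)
Your proof is correct. The only difference from the paper's argument is one of organization: the paper runs a single direct contradiction --- negate uniform convergence on $A$, extract a convergent subsequence $y_{n_k}\to y$, and use monotonicity of $f_{n_k}$ once to get $y_{n_k}>f_{n_k}(f^{-1}(y)+\epsilon_0)$, whence $y\ge f(f^{-1}(y)+\epsilon_0)>y$ in the limit --- whereas you factor the statement into two lemmas, first proving pointwise convergence $f_n^{-1}(y)\to f^{-1}(y)$ by the sandwich $f_n(x-\varepsilon)<y<f_n(x+\varepsilon)$, and then invoking (and reproving) the P\'olya-type fact that monotone functions converging pointwise to a continuous monotone limit converge uniformly on compacts. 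The underlying mechanism is identical in both routes (monotonicity plus pointwise convergence of $f_n$), so neither buys anything essential over the other; your version isolates a reusable general lemma and makes the role of continuity of $f^{-1}$ explicit, while the paper's is marginally shorter. One cosmetic remark: in your Step 2 sandwich you should note, as you implicitly do, that $f^{-1}$ is indeed continuous because $f$ is a continuous strictly increasing bijection of $\RR$; with that in place the argument is complete.
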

  \begin{proof}
    The proof follows by contradiction. Suppose there exists
    $\epsilon_0$ and a subsequences $\{f_{n_k}\}$ and $\{y_{n_k}\}$  such that
   \begin{equation*}
    y_{n_k}\rightarrow y, \; \mbox{\rm as}\; n_k\rightarrow\infty,
    \end{equation*}
    \begin{equation*}
      |f^{-1}_{n_k}(y_{n_k})-f^{-1}(y)|>\epsilon_0\,,\quad \forall n_k\,.
    \end{equation*}
    Thus, for infinitely many $n_k$'s, either
     $f_{n_k}^{-1}(y_{n_k})>f^{-1}(y)+\epsilon_0$ or $f_{n_k}^{-1}(y_{n_k})<f^{-1}(y)-\epsilon_0$.
     Without lost of generality, we consider only the former case
     in which $y_{n_k}>f_{n_k}(f^{-1}(y)+\epsilon_0 )$ for infinitely
     many $n_k$'s. Upon passing   the limit $n_k\to\infty$,
     we obtain $y\ge f(f^{-1}(y)+\epsilon_0)>f(f^{-1}(y))$, which is a contradiction.
  \end{proof}

Let us see how  Proposition \ref{thm.wext} follows from these propositions.
\smallskip

\noindent
\begin{proof}[Proof of Proposition \ref{thm.wext}]
By Proposition~\ref{prop.convg} (with $g\equiv 1$) we see that
$\int_0^t  W_\pi(X_\pi(s))
ds $ converges almost surely to
$\int_0^t \dot W(X(s)) ds$ uniformly in $t$ on compacts of $\RR_+$. It is also obvious  from the definitions of
$S_{W_\pi} (x)$, $T_{W_\pi, B}(t)$, $X_\pi(t)$,
and application
of Lemma~\ref{lemma_inverse}
that $X_\pi(t)$ converges almost surely to $X(t)$ uniformly in $t$ on compact intervals of $\RR_+$.
From Proposition~\ref{prop.bpib}, it follows that
$\mathcal{B}_\pi(t)$ converges almost surely  to the process $\cB$ defined in \eqref{eqn.cb} uniformly on  compact intervals of $\RR_+$. By passing through the limit $|\pi|\to0$ in \eqref{e.2.16}, we see that $X$ satisfies \eqref{equation}. In addition, by  Lemma~\ref{pi_rep}, for {\it every} $\pi$, $\mathcal{B}_\pi$ is the Brownian motion independent of $W$, hence, it is trivial to see that the limiting process
$\mathcal{B}$ is also  a  Brownian motion independent of $W$.  
This finishes the proof. 
\end{proof}


\section{It\^o formula - Proof of Theorem \ref{thm.ito}}
\label{sec:Ito}
\setcounter{equation}{0}
\begin{proof}[Proof of Theorem \ref{thm.ito}]
  Let $\pi$ be any partition of $\RR$. Let $W_\pi$ be the linear interpolation of $W$ defined  by  \eqref{polygW}.
 Denote  $F_\pi(x)=\int_0^x f(y,W_\pi(y))dy+F(0)$ and
 $X_\pi(t) =  S^{-1}_{W_\pi}\circ B\circ T^{-1}_{W_\pi,B}(t)$.  We apply the time-changed It\^o formula \eqref{id.ito}
  for $F_\pi(X_\pi(t))=F_\pi\circ S^{-1}_{W_\pi}\circ B\circ T^{-1}_{W_\pi,B}(t)$,
  recall that in order to apply It\^o formula we  first fix a  realization of $W$ and we also
  use the fact that $B$ is a Brownian motion with respect to $\cF^{B,W}$.
\begin{align*}
  F_\pi(X_\pi(t))&=F(0)+\int_0^t (F_\pi\circ S^{-1}_{W_\pi})'\circ B\circ T^{-1}_{W_\pi,B}(s)\,dB\circ T_{W_\pi,B}^{-1}(s)\\
  &\quad+\frac12 \int_0^t (F_\pi\circ S^{-1}_{W_\pi})''\circ B\circ T^{-1}_{W_\pi,B}(s)\, dT^{-1}_{W_\pi,B}(s) \,.
\end{align*}
It is now easy to see that
\begin{align*}
  (F_\pi\circ S^{-1}_{W_\pi})'\circ B\circ T^{-1}_{W_\pi,B}(s)=f(X_\pi(s),W_\pi(X_\pi(s)))e^{-W_\pi(X_\pi(s))}\,,
\end{align*}
\begin{align*}
  (F_\pi\circ S^{-1}_{W_\pi})''\circ B\circ T^{-1}_{W_\pi,B}(s)
  &=\partial_xf(X_\pi(s), W_\pi(X_\pi(s)))e^{-2W_\pi(X_\pi(s))}
  \\&\quad + \partial_uf(X_\pi(s), W_\pi(X_\pi(s)))e^{-2W_\pi(X_\pi(s))}\dot{W}_\pi(X_\pi(s))
  \\&\quad-f(X_\pi(s), W_\pi(X_\pi(s)))e^{-2W_\pi(X_\pi(s))}\dot{W}_\pi(X_\pi(s))\,,
\end{align*}
and
\begin{equation*}
  dT^{-1}_{W_\pi,B}(s)=e^{2W_\pi(X_\pi(s))}ds\,.
\end{equation*}
Upon combining the above four identities, we obtain
\begin{equation}\label{ito.Xpi}
	\begin{split}
		F_\pi(X_\pi(t))
  		&=F(0)+\int_0^t f(X_\pi(s), W_\pi(X_\pi(s)))\,d \mathcal{B}_\pi(s)
  		\\&\quad+\frac12\int_0^t \partial_x f(X_\pi(s),W_\pi(X_\pi(s)))ds
   		\\&\quad-\frac12 \int_0^t f(X_\pi(s),W_\pi(X_\pi(s)))\dot{W}_\pi(X_\pi(s))ds
   		\\&\quad+\frac12 \int_0^t f'(X_\pi(s),W_\pi(X_\pi(s)))\dot{W}_\pi(X_\pi(s))ds\,,
	\end{split}
\end{equation}
where $\mathcal{B}_\pi(t)=\int_0^t e^{-W_\pi(X_\pi(s))}dB\circ T_{W_\pi,B}^{-1}(s)$ is a Brownian motion, as seen from Lemma~\ref{pi_rep}. For every $\delta>0$, from Proposition \ref{prop.convg}, we can choose a partition $\pi=\pi(\delta)$ such that
  \begin{equation*}
    \int_0^t f(X_{\pi(\delta)}(s), W_{\pi(\delta)}(X_{\pi(\delta)}(s)))\dot{W}(X_{\pi(\delta)}(s))ds
  \end{equation*}
  and
  \begin{equation*}
    \int_0^t f'(X_{\pi(\delta)}(s), W_{\pi(\delta)}(X_{\pi(\delta)}(s)))\dot{W}(X_{\pi(\delta)}(s))ds
  \end{equation*}
  converge to 
  \begin{equation*}
  	\int_\RR f(x,W(x))e^{-W(x)}L_B(T^{-1}_{W,B}(t),S_W(x))W(d^ox)
  \end{equation*}
  and
  \begin{equation*}
  	\int_\RR f'(x,W(x))e^{-W(x)}L_B(T^{-1}_{W,B}(t),S_W(x))W(d^ox)
  \end{equation*}
  respectively as $\delta\downarrow0$. In addition, since $X_\pi$ and $W_\pi$ converge to $X$ and $W$, respectively, uniformly over compact intervals, with probability one, the integral $\int_0^t\partial_xf(X_\pi(s),W_\pi(X_\pi(s)))ds$ converges to $\int_0^t\partial_xf(X(s),W(X(s)))ds$. Hence, by passing through the limit $\delta\downarrow0 $ in \eqref{ito.Xpi}, it remains to show that the stochastic integral
$$\int_0^t f(X_\pi(s), W_\pi(X_\pi(s)))\,d \mathcal{B}_\pi(s)$$ converges
to $\int_0^t f(X(s), W(X(s)))d\mathcal{B}(s)$ in probability as the mesh size of $\pi$ shrinks to 0. For this purpose, we fix a continuous sample path of $W$ and further denote $\tilde f(x)=f(x,W(x))$ and $\tilde f_\pi(x)=f(x,W_\pi(x))$. Since for fixed $t>0$, $X_\pi$ converges uniformly to $X$ on $[0,t]$, for each $M>0$ we can find a stopping time $T_M$ such that
\begin{equation*}
    \sup_{s\le t}\sup_{\pi}|X_{\pi} (s\wedge T_M)|\le M \,.
\end{equation*}
Since $X$ has finite range, we can also require
$\lim_{M\to\infty}T_M=\infty$. Thus, it suffices to show the following limit in $L^2$
\begin{equation*}
    \lim_{|\pi|\downarrow0} \int_0^{t\wedge T_M}\tilde f_\pi(X_\pi(s))d\cB_\pi(s)=\int_0^{t\wedge T_M}\tilde f(X(s))d\cB(s)\,.
\end{equation*}
Similarly to the proof of Proposition~\ref{prop.bpib}, it is equivalent to show
\begin{multline}
\label{eq3_07_5}
    \lim_{|\pi|\to0} \EE^B \left[\int_0^{t\wedge T_M} \tilde f_\pi(X_\pi(s))\,d \mathcal{B}_\pi(s)\int_0^{t\wedge T_M} \tilde f(X(s))\,d \mathcal{B}(s)\right]
    \\=\EE^B\int_0^{t\wedge T_M} |\tilde f(X(s))|^2\,d s\,.
\end{multline}

Indeed, by the It\^o isometry, the expectation on the left side equals to
\begin{align*}
    \EE^B \left[\int_0^{T_{W_\pi,B}^{-1}(t\wedge T_M)\wedge T_{W,B}^{-1}(t\wedge T_M)} (\tilde f_\pi\circ S_{W_\pi}^{-1})'\circ B(u)\cdot(\tilde f\circ S_{W}^{-1})'\circ B(u)\,du\right]\,.
\end{align*}
It follows from Lemma~\ref{lemma_inverse} that with probability one
\begin{multline*}
    \lim_{|\pi|\to0}\int_0^{T_{W_\pi,B}^{-1}(t\wedge T_M)\wedge T_{W,B}^{-1}(t\wedge T_M)} (\tilde f_\pi\circ S_{W_\pi}^{-1})'\circ B(u)\cdot(\tilde f\circ S_{W}^{-1})'\circ B(u)du
    \\=\int_0^{T_{W,B}^{-1}(t\wedge T_M)} |(\tilde f\circ S_{W}^{-1})'\circ B(u)|^2\,du=\int_0^{t\wedge T_M} |\tilde f(X(s))|^2d s \,.
\end{multline*}
As in  the proof of  Proposition~\ref{prop.bpib} we can use the Cauchy-Schwarz inequality and some changes of variables to see that
\begin{align*}
& \left(\int_0^{T_{W_\pi,B}^{-1}(t\wedge T_M)\wedge T_{W,B}^{-1}(t\wedge T_M)}
(\tilde f_\pi\circ S_{W_\pi}^{-1})'\circ B(u)\cdot(\tilde f\circ S_{W}^{-1})'\circ B(u)\,du\right)^2
    \\&\le\int_0^{T_{W_\pi,B}^{-1}(t\wedge T_M)} |(\tilde f_\pi\circ S_{W_\pi}^{-1})'\circ B(u)|^2du
     \int_0^{T_{W,B}^{-1}(t\wedge T_M)}|(\tilde f\circ S_{W}^{-1})'\circ B(u)|^2du
    \\&=\int_0^{t\wedge T_M} |\tilde f_\pi(X_\pi(s))|^2ds \int_0^{t\wedge T_M} |\tilde f(X(s))|^2ds
    \\&\le t^2 \sup_{|x|\le M} |\tilde f(x)|^4\,.
\end{align*}
We may  use uniform integrability to get~(\ref{eq3_07_5}) and then
to conclude the proof. 
\end{proof}

\section{Strong solution - Proof of Theorem~\ref{thm.sext}} 
\label{sec:existence_of_strong_solution}
\subsection{Existence part of Theorem~\ref{thm.sext}}
\label{sub:existence}
Because the methods proving existence and uniqueness are quiet different, we consider them separately. In this subsection, we focus on showing existence of a strong solution to equation \eqref{equation}. Throughout the current section, $W$ is a (given)  two-sided Brownian motioin and $\cB$ is a
(given) Brownian motion independent of  $W$.
 We first seek for a Brownian motion $B$ such that relation \eqref{eqn.cb} is verified.
  For this purpose, we first prove the following result.
\begin{lemma}\label{lem.m}
    Let $\cB$ be a Brownian motion and  let $W$ be two-sided  Brownian motion independent of $\cB$.
    Then, for $P$-a.s. $W$, the equation
    \begin{equation}
        M  (t) =\int_0^t    e^{W \circ S_W^{-1} \circ M( u)}  d\cB
        (u)\,, \quad t\ge 0\label{e.M}
    \end{equation}
    has unique strong solution $( M(t), t\ge 0)$  which has continuous sample paths.
\end{lemma}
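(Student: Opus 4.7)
The plan is to prove Lemma~\ref{lem.m} via the classical Yamada--Watanabe program: establish weak existence together with pathwise uniqueness, and then invoke the Yamada--Watanabe theorem to conclude unique strong existence. Rewrite the equation as $dM(t)=\sigma(M(t))\,d\cB(t)$ with $\sigma(x):=e^{W(S_W^{-1}(x))}$. For $P$-almost every sample of $W$ the scale function $S_W$ is a continuous strictly increasing bijection of $\RR$ onto itself (by the law of the iterated logarithm, $\int_{\pm\infty}e^{W(z)}\,dz=\infty$), so $\sigma$ is a continuous, strictly positive function on $\RR$.

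For weak existence, I would proceed by time change. Pick an auxiliary Brownian motion $\widetilde B$ independent of $W$ on an enlargement of the probability space and set
\[
  T(t):=\int_0^t e^{-2W(S_W^{-1}(\widetilde B(s)))}\,ds.
\]
Since $\widetilde B$ is recurrent and, for each fixed $W$, the integrand is bounded below on compacts, $T$ is almost surely a continuous, strictly increasing bijection of $[0,\infty)$. Define $M(t):=\widetilde B(T^{-1}(t))$. The time-change formula of Proposition~\ref{prop:3.1} shows that $M$ is a continuous local martingale with $d\langle M\rangle_t=\sigma(M(t))^2\,dt$, so $\widehat\cB(t):=\int_0^t\sigma(M(s))^{-1}\,dM(s)$ is a Brownian motion by L\'evy's characterization. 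An elementary covariance computation analogous to the one at the end of the proof of Lemma~\ref{pi_rep} then shows that $\widehat\cB$ is independent of $W$; hence $(M,\widehat\cB)$ is a weak solution to~\eqref{e.M}.

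The pathwise uniqueness step is the heart of the proof. A naive H\"older-$\alpha$ bound on $W$ with $\alpha<1/2$ only gives $|\sigma(x)-\sigma(y)|\lesssim|x-y|^\alpha$, which is too weak for Yamada--Watanabe. The saving ingredient is the sharper L\'evy modulus of continuity of Brownian motion: for $P$-a.e.\ $W$ and every compact $[-R,R]\subset\RR$, there is a constant $c_R$ with $|W(u)-W(v)|\le c_R\sqrt{|u-v|\,|\log|u-v||}$ for $u,v\in[-R,R]$ and $|u-v|$ sufficiently small. Combining this with the local Lipschitz property of $S_W^{-1}$ (whose derivative $e^{-W\circ S_W^{-1}}$ is locally bounded) and the local Lipschitz property of the exponential, I would obtain
\[
  |\sigma(x)-\sigma(y)|^2\le C_R\,|x-y|\,|\log|x-y||
\]
for $x,y$ in a compact with $|x-y|$ small. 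Since $\int_{0+}du/(u|\log u|)=\infty$, this is precisely the Yamada--Watanabe condition with $\rho(u)=\sqrt{u|\log u|}$. Given two continuous solutions $M_1,M_2$ driven by the same $\cB$, localize with $\tau_R:=\inf\{t:|M_1(t)|\vee|M_2(t)|>R\}$ (so that $\tau_R\to\infty$ by continuity of the $M_i$), apply It\^o's formula to $\phi_n(M_1-M_2)$ for the standard Yamada--Watanabe mollification $\phi_n$ of $|x|$, and take expectation; the built-in inequality $\phi_n''(x)(\sigma(M_1)-\sigma(M_2))^2\le 2/n$ yields $\EE|M_1(t\wedge\tau_R)-M_2(t\wedge\tau_R)|=0$ upon $n\to\infty$. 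Hence $M_1\equiv M_2$.

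By the Yamada--Watanabe theorem the previous two steps produce a unique strong solution $M$ on the given probability space carrying $\cB$ and $W$; continuity of the sample paths is automatic since $M$ is an It\^o integral with a continuous, locally bounded integrand. The main obstacle is the pathwise uniqueness step: the generic H\"older regularity of $W$ is just barely insufficient, and one must appeal to the sharper L\'evy modulus, reflecting the introductory remark that the argument ``crucially uses the fact that $W$ is a Brownian motion.''
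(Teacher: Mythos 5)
Your proposal is correct and follows essentially the same route as the paper: weak existence via the It\^o--McKean time change of an auxiliary Brownian motion, pathwise uniqueness from the L\'evy modulus of continuity of $W$ (which yields exactly the Yamada--Watanabe modulus $\rho(u)=\sqrt{u|\log u|}$ with $\int_{0+}\rho(u)^{-2}\,du=\infty$), and then the Yamada--Watanabe theorem to pass to a unique strong solution. The only cosmetic difference is that the paper localizes by truncating the coefficient (introducing $\phi_k$ and citing Ikeda--Watanabe for the criterion) before stopping, whereas you localize by stopping times and run the mollification argument directly; both are equivalent.
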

\begin{proof}
First, we show the
existence of the weak solution to \eref{e.M}. In fact,
let $\tilde B$ be a Brownian independent from $W$. We define
$$\tilde\cB(t)=\int_0^t e^{-W\circ S_W^{-1}\circ \tilde B\circ T_{W,\tilde B}^{-1}(s)}
d\tilde B\circ T_{W,\tilde B}^{-1}(s)\,.$$  Then,  it follows from Proposition \ref{thm.wext} that
$\tilde\cB(t)$ is a Brownian motion, independent of $W$. Denote  $\tilde M=\tilde B\circ T_{W,\tilde B}^{-1} $.
Then $d\tilde\cB(t)=  e^{-W\circ S_W^{-1}\circ M(t)}
dM(t) $ or $
dM(t) =e^{W\circ S_W^{-1}\circ M(t)}  d\tilde\cB(t)  $.  This means that
 $(\tilde M,\tilde B)$  is a weak solution to equation \eqref{e.M}.

Let us prove the pathwise uniqueness for equation\eref{e.M}. Note that by the classical L\'evy theorem
$W$ satisfies the following modulus of continuity  condition: for each $n\geq 1$,
\begin{eqnarray}
|W(x,\omega)-W(x',\omega)|
&\leq& c_n(\omega)\log(|x-x'|)\sqrt{|x-x'|} \qquad  \forall x,x'\in [-n,n] \;\,, \nonumber   \\
&&\qquad \text{for some } c_n(\omega)\geq 0,\; \text{for } P-\text{a.s. } \omega.
\end{eqnarray}
Thus we can find  a set $A\subset \Omega$ with $P(A)=1$, such that, for all $\omega\in A$, the following holds: for any $n\ge 1$, there exists
$c_n(\omega)\ge  0$, such that
\begin{equation*}
|W(x,\omega)-W(x',\omega)|\leq \rho_n(x,x'),\;\;\forall x,x'\in [-n,n]\,,
\end{equation*}
where $\rho_n(x,x'):=c_n(\omega)\log(|x-x'|)\sqrt{|x-x'|}$. Fix arbitrary $\omega\in A$.   For any $k\geq 1$, we define
\begin{equation}
\phi_k(z)=\phi_k(z,\omega)=   e^{W(S_W^{-1}(-k\vee (z\wedge k)),\omega)}
\end{equation}
and consider the following stochastic differential equation
\begin{eqnarray}
M_k(t) &=&\int_0^t   \phi_k(M_k( u))  d\cB
(u)\,.\label{e.M.n}
\end{eqnarray}
Note that
\begin{align}
\nonumber
\int_{0+}^1 (\sqrt{|\log(u)u|})^{-2})\,du &= -\int_{0+}^1 (\log(u))^{-1}\,d(\log u)\\
&=  \int_{1}^{\infty} \frac{1}{v}\,dv
=\infty. \label{eqrho}
\end{align}
We now take
$$ n(k,w) = \lfloor |S_W^{-1}(k)| + |S_W^{-1}(-k)|+1 \rfloor
\,,$$
where $\lfloor a\rfloor$ denotes the integer part of $a$.
Then
\begin{equation*}
|W(x,\omega)-W(x',\omega)|\leq c_n(\omega)\log(|x-x'|)\sqrt{|x-x'|}
\end{equation*}
for all $x,x'$ in the interval $[-n(k,w),n(k,w)]$.
Define
\begin{eqnarray*}
S^*(\omega)=\sup_{|x|\leq  |S_W^{-1}(-k)|+  |S_W^{-1}(k)|} ( e^{W(x, \om)}+e^{-W(x, \om)}) .
\end{eqnarray*}
Then
\begin{align*}
 |\phi_k(z)-\phi_k(z')|
 &\leq S^* |W(S_W^{-1}(-k\vee (z\wedge k)))-W(S_W^{-1}(-k\vee (z'\wedge k)))|\\
&\leq   S^* \rho(|S_W^{-1}(-k\vee (z\wedge k))-S_W^{-1}(-k\vee (z'\wedge k))|).
\end{align*}
Note that $S_W^{-1}$ is Lipschitz function and we can easily derive:
$$|S_W^{-1}(-k\vee (z\wedge k))-S_W^{-1}(-k\vee (z'\wedge k))|\leq S^*|z-z'|,$$
and hence
$$ |\phi_k(z)-\phi_k(z')|
\leq S^*  \rho(S^*|z-z'|).
$$
This  together with \eref{eqrho}  implies the pathwise
uniqueness of the equation  \eref{e.M.n}
by standard Yamada-Watanabe criterion (see  \cite{iw}, Chapter IV, Theorem 3.2).

Now, let $M^1$ and $M^2$ be two continuous solutions to \eref{e.M}.
Define the following stopping times:
\begin{align*}
T^{M_1,W}_{k}&= \inf\{t\geq 0:\; M^{1}(t)=S_{W}(k)\; \text{or}\;  M^{1}(t)=S_{W}(-k )\},\\
T^{M_2,W}_{k}&= \inf\{t\geq 0:\; M^{2}(t)=S_{W}(k)\; \text{or}\;  M^{2}(t)=S_{W}(-k  )\},\\
\widetilde{T}^{ W}_{k}& = \min\left(T^{M_1,W}_{k}, T^{M_2,W}_{k}\right).
\end{align*}
Since the processes $(M^{1}(t), t\ge 0)$ and $(M^{2}(t), t\ge 0)$ have
continuous sample paths, we see $\widetilde{T}^{ W}_{k}\uparrow \infty$ a.s. when $k\rightarrow
\infty$. When $t\le \widetilde{T}^{ W}_{k}$, both $(M^{1}(t), t\ge 0)$ and $(M^{2}(t), t\ge 0)$ satisfy
\eref{e.M.n}. Thus $ M^{1}(t)=M^{2}(t)$ when $t\le \widetilde{T}^{ W}_{k}$. Passing through the limit $k\rightarrow \infty$ yields the strong uniqueness of the equation \eref{e.M}.

Finally, because weak existence and strong uniqueness together imply strong existence, we see
that the equation \eref{e.M} has a unique strong solution.
\end{proof}

We are now ready to prove the existence part of Theorem~\ref{thm.sext}.
\begin{proof}[Proof of existence part of Theorem~\ref{thm.sext}]
Let $M$ be the unique strong solution to equation \eqref{e.M}.
Define a stopping $\tau(t)$ so that
\begin{equation}\label{def.tauM}
    \int_0^{\tau(t)}e^{2W\circ S_W^{-1}\circ M(s)}ds=t\,.
\end{equation}
 We note that if $M$ and $W$ are provided, $\tau$ is uniquely determined by \eqref{def.tauM} because the map $u\mapsto \int_0^u e^{2W\circ S_W^{-1}\circ M(s)}ds$ is strictly increasing on $\RR_+$. We define $B=M\circ \tau$.  It follows from \eqref{e.M} that
\begin{align*}
    \langle B\rangle_t=\int_0^{\tau(t)}e^{2W\circ S_W^{-1}\circ M(s)}ds=t\,.
\end{align*}
Thus, from L\'evy's characterization theorem, $B$ is a Brownian motion.
In addition, the relation \eqref{def.tauM} is equivalent to
\begin{equation*}
    \tau(t)=\int_0^t e^{-2W\circ S_W^{-1}\circ M\circ \tau(s)}ds\,.
\end{equation*}
Hence, taking into account the relation $M\circ \tau=B$, we
have 
\begin{equation}
\label{eq:28_4} 
\tau(t)=\int_0^t e^{-2W\circ S_W^{-1}\circ B(s)}ds=T_{W,B}(t).
\end{equation}
 From here and the equation \eqref{e.M} it follows that $B$ and $\cB$ satisfy the relation \eqref{eqn.cb}.
In addition, similar to
the proof of Proposition \ref{thm.wext} it is clear that $B$ is independent of
$W$.

We now define $X=S_W^{-1}\circ B\circ T_{W,B}^{-1}$. Then Proposition~\ref{thm.wext}
shows that $X$ is a weak solution to \eqref{equation} since we have shown that $\cB$ and $B$ satisfy the relation  \eqref{eqn.cb}.
Now by~\eqref{eq:28_4}  we get that 
\begin{equation}
\label{eq:28_4_1}
 X=S_W^{-1}\circ B\circ \tau^{-1}=S_W^{-1}\circ M,
\end{equation}
where the last equality follows by the definition of $B$. Since $M$ is the unique strong solution to  \eqref{e.M}, we get that $M$ is adapted to filtration 
 $\cF^{\cB,W}$, and hence by \eqref{eq:28_4_1} $X$  is also adapted to filtration 
 $\cF^{\cB,W}$. This finishes the proof  that $X$ is a strong solution to 
the equation \eref{equation}.
\end{proof}

\subsection{Uniqueness part of Theorem~\ref{thm.sext}} 
\label{sub:strong_uniqueness}
To show uniqueness for strong solutions of \eqref{equation}, we rely on It\^o formula, Theorem \ref{thm.ito}.
\begin{proof}[Proof of uniqueness part of Theorem~\ref{thm.sext}]
Let $\cB$ be a Brownian motion independent of $W$. We would like to show that $X$ constructed in the 
proof of the existence part of Theorem~\ref{thm.sext} is indeed the unique 
strong solution to the equation~\eqref{equation}. Let $\wt X$ be another strong  solution, and let $\wt B$ the corresponding Brownian motion in the It\^o-McKean 
representation, that is 
\begin{eqnarray}
\label{eq:im_a}
 \wt X=S_W^{-1}\circ \wt B\circ T_{W,\wt B}^{-1}.
\end{eqnarray}
Here, as usual, 
\begin{equation}
T_{W, \wt B}(t)=\int_0^t e^{-2W\circ S_W^{-1}(\wt B(s) )} ds\,,\label{twb1}
\end{equation}
or alternatively $T_{W, \wt B}(t)$ satisfies 
 \begin{equation}\label{def.tilde_TWX}
        \int_0^{T_{W,\wt B}(t)}e^{2W\circ \wt X(s)}ds=t\,.
    \end{equation}
The advantage of the later definition is that $T_{W, \wt B}(t)$ is given only via $\wt X$. By a simple transformation one can see that $\wt B$ can be expressed via $\wt X$ as 
   \begin{equation}
\label{eq:un_wB1}
         \wt B(t)=S_{W}\circ \wt X\circ T_{W,\wt B}.
  \end{equation}
Now we would like to express $\wt B$ as a solution to certain stochastic equation driven by $\cB$. To this end we apply It\^o formula from  Theorem \ref{thm.ito}  to the function $S_W(x)=\int_0^x e^{W(y)}
    dy$. However, we cannot do it directly, since $x\mapsto e^x$ does not have bounded derivatives. Therefore an approximation is needed.
    Let $R$ be a fixed positive number. Let $f_R$ be a  $C^3$-function with bounded derivatives such
    that $f_R(x)=e^x$ for every $x\in[-R,R]$ and $f_R=0$ outside $[-R-1,R+1]$. We then apply  It\^o formula from  Theorem \ref{thm.ito} 
    to the function $F_R(x)=\int_0^x f_R(W(y))dy$ to get
    \begin{align*}
        F_R(\wt X(t))&=\int_0^t f_R(W(\wt X(s)))d\cB(s)-\frac12\int_{-\infty}^\infty f_R(W(x))L_{\wt X}(t,x)W(d^ox)
        \nonumber\\&\quad+\frac12   \int_{-\infty}^\infty f'_R(W(x))L_{\wt X}(t,x)W(d^ox)\,.
    \end{align*}
    Since $\wt X$ has continuous sample paths, $L_{\wt X}(t,\cdot)$ vanishes outside of a compact interval
 (independent from $R$). We can pass  easily to the limit, as  $R\to\infty$,  and obtain
    \begin{equation}
        S_W(\wt X(t))=\int_0^t e^{W(\wt X(s))}d\cB(s).\label{e.9.sw_a}
    \end{equation}
The previous equation, \eqref{eq:un_wB1} and  \eqref{eq:im_a} imply 
   \begin{align}
        \wt B(t) &=\int_0^{T_{W,\wt B}(t)} e^{W(\wt X(s))}d\cB(s)
\nonumber 
\\
 \label{e.9.sw_b}
&= \int_0^{T_{W,\wt B}(t)} e^{W\circ S_W^{-1}\circ \wt B\circ T_{W,\wt B}^{-1} (s)}d\cB(s)
    \end{align}
Then we immediately obtain
  \begin{equation*}
        \wt B(T^{-1}_{W,\wt B}(t)) =\int_0^{t} e^{W\circ S_W^{-1}\circ \wt B\circ T_{W,\wt B}^{-1}(s))}d\cB(s).
    \end{equation*}
Thus $(\wt B \circ T^{-1}_{W, \wt B}(t), t\ge 0)$ satisfies \eref{e.M}.
   However, Lemma \ref{lem.m} states that the   equation  \eref{e.M} has the unique strong solution. That is, if $M(t)=\wt B \circ T^{-1}_{W, \wt B}(t)$ then $M$ is uniquely determined from the equation \eref{e.M}. 
    In addition, upon comparing \eqref{def.tilde_TWX} with \eqref{def.tauM}, we see that $T_{W, \wt B}(t)=\tau(t)$ where $\tau(t)$   
    is uniquely
 defined by \eref{def.tauM}. Note that both $M$ and $\tau$ are solutions of equations (\eqref{e.M} and \eqref{def.tauM} respectively) which do
 not  depend on {\it particular} solution $\wt X$ for~\eqref{equation}.
Then we have 
\begin{align*} \wt B &= M\circ \tau\\
 &= B, \;\;{\rm a.s.} 
\end{align*}
where $B$ is the Brownian motion constructed in the proof
of the existence part of Theorem~\ref{thm.sext}.  This and \eqref{eq:im_a} imply that 
$$ \wt X =X,\;\;{\rm a.s.}$$
and uniqueness follows. 
\end{proof}
\section{Proof of Proposition~\ref{prop.strat}} 
\label{sec:stratonovich_integral_II}
\setcounter{equation}{0}
    We  have the following decomposition
    \begin{equation*}
      \int_a^b g(x,W(x))L_B(\xi,S_W(x))\dot{W}_\pi(x) dx =I_1+I_2+I_3+I_4\,
    \end{equation*}
    where
    \begin{align*}
      &I_1=\sum_{k=0}^{n-1} \int_{x_k}^{x_{k+1}}[g(x,W(x))-g(x_k,W(x))]L_B(\xi,S_W(x))\frac{W(x_{k+1})-W(x_k)}{x_{k+1}-x_k}dx\,,\\
      & I_2=\sum_{k=0}^{n-1} \int_{x_k }^{x_{k+1}}g(x_k,W(x))[L_B(\xi,S_W(x))-L_B(\xi,S_W(x_k))]\frac{W(x_{k+1})-W(x_k)}{x_{k+1}-x_k}dx\,,\\
      & I_3=\sum_{k=0}^{n-1}\int_{x_k }^{x_{k+1}}[g(x_k,W(x))-g(x_k,W(x_k))]L_B(\xi,S_W(x_k))\frac{W(x_{k+1})-W(x_k)}{x_{k+1}-x_k}dx\,,\\
      & I_4=\sum_{k=0}^{n-1} g(x_k,W(x_k))L_B(\xi,S_W(x_k))[W(x_{k+1})-W(x_k)]\,.
    \end{align*}
    From the Cauchy-Schwarz inequality we see that $I_1^2$ is at most
    \begin{align*}
      (b-a) \sum_{k=0}^{n-1} \int_{x_k}^{x_{k+1}} |g(x,W(x))-g(x_k,W(x))|^2      |L_B(\xi, S_W(x))|^2\frac{[W(x_{k+1})-W(x_k)]^2}{(x_{k+1}-x_k)^2}dx\,.
    \end{align*}
    Taking expectation and applying the H\"older inequality and \eqref{cond.g1} we obtain
    \begin{equation*}
      \EE I^2_1\lesssim \sum_{k=0}^{n-1}
      |x_{k+1}-x_k|^{2 \lambda}\lesssim  |\pi|^{2 \lambda-1}
    \end{equation*}
    which implies $\EE I_1^2$ goes to 0 since $\lambda>1/2$.

    Denote each term in the expression of $I_2$ by $I_{2k}$.  Then
    \begin{equation*}
    \EE(I_2^2)
    = \sum_{k=0}^{n-1} \EE(I_{2k}^2)
    + \sum_{k\neq j} \EE(I_{2k}I_{2j})=:I_{2,1}+I_{2,2}\,.
    \end{equation*}
    From the Cauchy-Schwarz inequality we see that $I_{2,1}$ is at most
    \begin{equation*}
    	\sum_{k=0}^{n-1}  \int_{x_k}^{x_{k+1}}\EE \left\{ |g(x_k,W(x))|^2 \left[ L_B(\xi, S_{W}(x )) -  L_B(\xi, S_{W}(x_k))\right]^2
      \frac{[W(x_{k+1})-W(x_k)]^2} { x_{k+1}-x_k  } \right\} dx
    \end{equation*}
    By conditioning on the $\si$-algebra generated by $W$
    (namely taking the expectation with respect to the Brownian motion $B$ first)
    and applying \eqref{ineq.Lxy} with $\beta=1/2$, we see that
    \begin{eqnarray*}
     I_{2,1}
     \lesssim \sum_{k=0}^{n-1}   \int_{ x_k}^{ x_{k+1}} \EE  |g(x_k,W(x))|^2 \left[\left| S_{W}(x )  -    S_{W}(x_k) \right|
        \frac{\left(W(x_{k+1})-W(x_k)\right)^2 } {x_{k+1}-x_k}  \right]dx.
    \end{eqnarray*}
    which is majorized by a constant multiple of $|\pi|$.  It follows that $\lim_{|\pi|\to0}I_{2,1}=0$. If $k\not=j$ and if $x\in [x_j, x_{j+1})$
    and $z\in [x_k, x_{k+1})$,  then
    the intervals $[S_{W}(x_j), S_{W}(x))$ and $[S_{W}(x_k), S_{W}(z))$
    are disjoint.  Then we have from \eqref{ineq.Lxyk} with $\alpha=0$,
    \begin{align*}
    &\EE[ L_B(\xi, S_{W}(x )) -  L_B(\xi, S_{W}(x_j))][ L_B(\xi, S_{W}(z ))
    -  L_B(\xi, S_{W}(x_k))]\\
  & \le \EE  |S_{W}(x )-S_{W}(x_j )| |S_{W}(z )-S_{W}(x_k )|\,.
    \end{align*}
    Therefore, together with \eqref{cond.g1}, we have
    \begin{align*}
     I_{2,2}
     &\lesssim  \sum_{j<k}   \int_{ x_j}^{ x_{j+1}}  \int_{ x_k}^{ x_{k+1}} \EE  \Bigg[
      e^{\theta |W(x )|+\theta|W(z)| } \left| S_{W}(x )  -    S_{W}(x_k) \right|
      \left| S_{W}(z )  -    S_{W}(x_k) \right|   \\
    &\quad    \left|  \frac{ W(x_{j+1})-W(x_j)  } {x_{j+1}-x_j}\frac{ W(x_{k+1})-W(x_k) }
       {x_{k+1}-x_k}\right|     \Bigg]dx dz  \,.
    \end{align*}
    It is now easy to check that $I_{2,2}$ converges to $0$, hence so does $I_2$.

    Using the Taylor expansion, we have \[g(x_k,W(x))-g(x_k,W(x_k)) =\partial_ug(x_k,W(x))
    (W(x)-W(x_k))+R_k(x)\] with $\sup_{0\le x\le y}\EE|R_k(x)|^p\le C_p |x_{k+1}-x_k|^p$. Hence, we can decompose $I_3=I_{3,1}+I_{3,3}+I_{3,3}$, where
    \begin{multline*}
    	I_{3,1}= \sum_{k=0}^{n-1} \left[\int_{ x_k}^{ x_{k+1}} (W(x)-W(x_k))dx \frac{W(x_{k+1})-W(x_k)}{x_{k+1}-x_k}-\frac{1}{2}(x_{k+1}-x_k) \right]
    	\\\times\partial_ug(x_k,W(x_k))L_B(\xi,S_W(x_k))\,,
    \end{multline*}
    \begin{equation*}
    	I_{3,2}=\frac{1}{2}\sum_{k=0}^{n-1} \partial_ug(x_k,W(x_k))L_B(\xi,S_W(x_k))( x_{k+1}- x_k)\,,
    \end{equation*}
    and
    \begin{equation*}
    	I_{3,3}=\sum_{k=0}^{n-1} \int_{ x_k}^{ x_{k+1}}R_k(x)dx\,L_B(\xi,S_W(x_k))\frac{W(x_{k+1})-W(x_k)}{x_{k+1}-x_k}\,.
    \end{equation*}
    $I_{3,1}$ is a  sum of martingale difference. It is easy to see that
    \begin{align*}
      \EE (I_{3,1})^2 &\le \sum_{k=0}^{n-1} \EE \left[|\partial_ug(x_k,W(x_k))|^2 L_B^2(\xi,S_W(x_k)) \right]\\
      &\quad \times \left[\int_{ x_k}^{ x_{k+1}} (W(x)-W(x_k))dx \frac{W(x_{k+1})-W(x_k)}
      {x_{k+1}-x_k}-\frac{1}{2}(x_{k+1}-x_k) \right]^2 \\
      &\le C \sum_{k=0}^{n-1} ( x_{k+1}-  x_k)^2\to 0\,.
    \end{align*}
    $I_{3,2}$ is the Riemann sum of the integral $\frac12\int_a^b \partial_ug(x,W(x))L_B(\xi,S_W(x))dx$. A straightforward estimation yields that $I_{3,3}$ converges to 0 in $L^2$. Hence, we have $I_3$ converges to $\frac12\int_a^b \partial_ug(x,W(x))L_B(\xi,S_W(x))dx$ in $L^2$.
    By standard It\^o calculus, we see that $I_4$ converges in $L^2$ to the It\^o integral $\int_{a}^{b}g(x,W(x))L_B(\xi,S_W(x))W(dx)$.\qed

\section{Proof of Proposition~\ref{prop.bpib}}\label{sec.pf3.4}
\setcounter{equation}{0}


From Doob's maximal inequality, it suffices to show
\begin{equation*}
    \lim_{|\pi|\to0} \EE |\cB_\pi(t)-\cB(t)|^2=0\,,
\end{equation*}
for every fixed $t>0$.
We write
\begin{align*}
\mathcal{B}_\pi(t)
&=\int_0^t e^{-W_\pi(X_\pi(s) )}  d B\circ T^{-1}_{W_\pi, B}(s)\\
&=\int_0^{T^{-1}_{W_\pi, B}(t)} e^{-W_\pi(X_\pi \circ T^{-1}_{W_\pi, B}(u) ) }  d B(u)\\
&=\int_0^{T^{-1}_{W_\pi, B}(t)} e^{-W_\pi(S_{W_\pi, B}^{-1}\circ B(u) ) }  d B(u)
\end{align*}
and similarly
\begin{align*}
\cB(t)
=\int_0^{T^{-1}_{W, B}(t)} e^{-W(S_{W, B}^{-1}\circ B(u) ) }  d B(u)\,.
\end{align*}
By
a change of variable (similar to the one used in the proof of Lemma \ref{pi_rep}), we can immediately get that  the quadratic variation of $\cB$ is given by
\begin{equation}
\label{eq3_07_3}
\int_0^{T^{-1}_{W, B}(t)} e^{-2W(S_{W, B}^{-1}\circ B(u) )}  d u
=\int_0^{t} e^{-2W(S_{W, B}^{-1}\circ B\circ T^{-1}_{W, B}(s) )}  d T^{-1}_{W, B}(s)=t\,,
\end{equation}
and hence  $\cB$
is a Brownian motion with respect to $\{\mathcal{F}^{B,W}_{T^{-1}_{W, B}(t)}\}_{t\geq 0}$. 
In addition,
Lemma~\ref{pi_rep} asserts that $\cB_{\pi}$ is a Brownian motion with respect to $\{\mathcal{F}^{B,W}_{T^{-1}_{W, B}(t)}\}_{t\geq 0}$.
Since $\cB_{\pi}$ and $\cB$ are square integrable martingales we get
\begin{align}
\nonumber
\label{eq3_07_4}
\EE\left(\mathcal{B}_\pi(t) \cB(t)\right)
&=\EE\left[
\int_0^{T^{-1}_{W_\pi, B}(t)} e^{-W_\pi(S_{W_\pi, B}^{-1}\circ B(u) ) }  d B(u)
\int_0^{T^{-1}_{W, B}(t)} e^{-W(S_{W, B}^{-1}\circ B(u) ) }  d B(u)\right]\\
&= \EE\left[
\int_0^{T^{-1}_{W_\pi, B}(t)   \wedge T^{-1}_{W, B}(t) }  e^{
-W_\pi(S_{W_\pi, B}^{-1}\circ B(u) )-W(S_{W, B}^{-1}\circ B(u) ) }  d u\right]\,.
\end{align}
From Lemma~\ref{lemma_inverse}, $T_{W_\pi,B}^{-1}$ and $S_{W_\pi}^{-1}$
converge uniformly over finite intervals, almost surely,
 to $T_{W,B}^{-1}$ and $S_{W}^{-1}$
respectively. Hence, for each $t\geq 0$,
\begin{multline}
\label{eq3_07_1}
\int_0^{T^{-1}_{W_\pi, B}(t)   \wedge T^{-1}_{W, B}(t) }  e^{
-W_\pi(S_{W_\pi, B}^{-1}\circ B(u) )-W(S_{W, B}^{-1}\circ B(u) ) }  d u
\\\rightarrow \int_0^{  T^{-1}_{W, B}(t) }  e^{ -2W(S_{W, B}^{-1}\circ B(u) ) }  d u=t,
\end{multline}
with probability one, as $|\pi|\rightarrow 0$. The last equality follows from~(\ref{eq3_07_3}).

Now, by first applying  Cauchy-Schwarz inequality, and then  equalities (\ref{eq3_07_2}) and (\ref{eq3_07_3}) we
get
\begin{multline*}
\left(
\int_0^{T^{-1}_{W_\pi, B}(t)   \wedge T^{-1}_{W, B}(t) }  e^{-W_\pi(S_{W_\pi, B}^{-1}\circ B(u) )
-W(S_{W, B}^{-1}\circ B(u) ) }  d u\right)^2
    \\\le\int_0^{T^{-1}_{W_\pi, B}(t)} e^{-2W_\pi(S_{W_\pi, B}^{-1}\circ B(u) )}  d u
    \int_0^{T^{-1}_{W, B}(t)} e^{-2W(S_{W, B}^{-1}\circ B(u) )}  d u =  t^2\,.
\end{multline*}
The above bound implies
 uniform integrability of random variables
$$\int_0^{T^{-1}_{W_\pi, B}(t)   \wedge T^{-1}_{W, B}(t) }  e^{-W_\pi(S_{W_\pi, B}^{-1}\circ B(u) )
-W(S_{W, B}^{-1}\circ B(u) ) }  d u,$$
and hence by (\ref{eq3_07_1}) we get that the right hand side of~(\ref{eq3_07_4}) converges to $t$, and this immediately
implies
 that $\lim_{|\pi|\to0}\EE\cB_\pi(t)\cB(t)=t$. Therefore,
\begin{align*}
\EE(\mathcal{B}_\pi(t) -\cB(t))^2
&=
\EE(\mathcal{B}_\pi(t)^2) +\EE( \cB(t) ^2)-
2\EE(\mathcal{B}_\pi(t) \cB(t))\\
&=2t-
2\EE(\mathcal{B}_\pi(t) \cB(t))
\end{align*}
converges to $0$ as $|\pi|\to0$. \qed
\section{Proof of Proposition~\ref{Prop.moment}}
\label{sec:pf.moment}
\setcounter{equation}{0}

Let  $p(t,x)=\displaystyle \frac{1}{\sqrt{2\pi t}}e^{-\frac{x^2}{2t}}$ be the heat kernel and $\mathfrak{S}_m$ denote the symmetric group of permutations of $\{1,  2, \cdots, m\}$. It is easy to verify  that for generic points $u_1,\dots,u_m$ in $\RR$, we have
  \begin{equation}\label{jmoment}
    \EE \prod_{j=1}^m L_B([\xi,\eta],u_j)=\sum_{\sigma\in\mathfrak{S}_m} \int_{D_m} \prod_{j=1}^m p(s_j-s_{j-1},u_{\sigma_j}-u_{\sigma_{j-1}})\,d \bar s\,,
  \end{equation}
  where $D_m$ is the domain $\{\bar s \in [\xi,\eta]^m:\xi<s_1<\cdots<s_m<\eta \}$,
  $d\bar s=ds_1\cdots ds_m$,  and $u_0=0$ by convention. \eqref{jmoment} is in fact the so-called Kac moment formula (see Marcus-Rosen's book \cite{marcus}).

To use  \eref{jmoment} to compute the two moments in \eqref{ineq.Lxy} and \eqref{ineq.Lxyk}, we need to introduce  some  notations. As introduced in \cite{hl}, for $k=1,\dots,n$ and $x\in\RR$, $V_k(x)$ denotes the substitution operator, i.e. for a generic function $f=f(u_1\,, \cdots\,, u_n)$, $V_k(x)f(u)=f(u_1, \cdots, u_{k-1}, x, u_{k+1}, \cdots, u_m)$.
It is clear that if $f(u)$ is a random process, then
\[
\EE V_k(x) f(u)=\EE   f(u_1, \cdots, u_{k-1}, x, u_{k+1}, \cdots, u_m)
=V_k(x) \EE   f(u)\,.
\]
Thus the operator $V_k$ commutes with the expectation operator.

For any points $x_1, \cdots, x_m$ and $y_1, \cdots, y_m$ in $\RR$, we denote
$\bar x=(x_1, \cdots, x_m)$ and $\bar y=(y_1, \cdots, y_m)$.
The notation $[\bar x,\bar y]$ denotes the rectangle $[x_1,y_1]\times\cdots\times[x_m,y_m]$  in $\RR^m$. The operator $\square^m([\bar x,\bar y])$ is defined as $\square^m([\bar x,\bar y]):=\prod_{k=1}^m \left[V_k(y_k)-V_k(x_k)\right]$.
  When applied to an $m$-multivariate   function,  $\square^m([\bar x,\bar y])$ is the rectangular increment of the function over the rectangle $[\bar x, \bar y]$. In particular, when $f(x)=f(x_1)\cdots f(x_m)$,  then
$\square^m([\bar x,\bar y])f=\prod_{k=1}^m [f(y_k)-f(x_k)]$.
  Moreover, for sufficiently smooth function $f$, the rectangular increment of $f$ can be computed  as follows
  \begin{equation}\label{id.sq-part}
    \square^m([\bar x,\bar y])f=\int_{[x,y]}\frac{\partial ^m}{\partial z_1 \partial z_2\cdots \partial z_m}f(\bar z)\,d\bar z\,.
  \end{equation}

With these notations, we can write $\prod_{k=1}^m \left(L_B([\xi,\eta],y_k)-L_B([\xi,\eta],x_k)\right)$ as follows
  \begin{equation*}
 \prod_{k=1}^m \left(L_B([\xi,\eta],y_k)-L_B([\xi,\eta],x_k)\right)=   \square^m([\bar x,\bar y]) \prod_{j=1}^m L_B([\xi,\eta],u_j)\,.
  \end{equation*}
Notice that the operator $\square$ also commutes with the expectation operator. In particular, when combined with \eqref{jmoment}, we obtain the formula
\begin{multline}
   \EE \prod_{k=1}^m  \left(L_B([\xi,\eta],y_k)-L_B([\xi,\eta],x_k)\right)
\\= \sum_{\sigma\in\mathfrak{S}_m} \int_{D_m}\!\! d\bar s\,
\square^m([\bar x,\bar y]) \prod_{j=1}^m
p(s_j-s_{j-1},u_{\sigma_j}-u_{\sigma_{j-1}})\,. \label{id.L-sq}
  \end{multline}
First,  let us assume  $x_1=\cdots=x_m=x$ and $y_1=\cdots=y_m=y$.   Denote $\bar x_{\hat m}=(x_{\si_1}, \cdots, x_{\si_{m-1}})$ and
$\bar x_{\hat m, \widehat{m-1}}=(x_{\si_1}, \cdots, x_{\si_{m-2}})$ etc.  From \eref{id.L-sq} it follows
\begin{align*}
 & \quad  \EE \prod_{k=1}^m  \left(L_B([\xi,\eta],y_k)-L_B([\xi,\eta],x_k)\right)\nonumber\\
&=\sum_{\sigma\in\mathfrak{S}_m} \int_{D_m}\!\! d\bar s\, \square^{m-1}([\bar x_{\hat m} ,\bar y_{\hat m} ])\prod_{j=1}^{m -1}
p(s_j-s_{j-1},u_{\sigma_j}-u_{\sigma_{j-1}}) \\
& \qquad\left[p(s_m-s_{m-1}, y-u_{\si_{m-1}})- p(s_m-s_{m-1}, x-u_{\si_{m-1}})\right]
\\
&=\sum_{\sigma\in\mathfrak{S}_m} \int_{D_m}\!\! d\bar s\, \square^{m-2}([\bar x_{\hat m, \widehat{m-1}} ,\bar y_{\hat m,\widehat{m-1}} ])\prod_{j=1}^{m -2}
p(s_j-s_{j-1},u_{\sigma_j}-u_{\sigma_{j-1}}) \\
&\qquad \bigg\{\left[p(s_m-s_{m-1}, y-y)- p(s_m-s_{m-1}, x-y)\right]p(s_{m-1}-s_{m-2},y-u_{\sigma_{m-2}})\\
& \qquad\quad- \left[p(s_m-s_{m-1}, y-x)- p(s_m-s_{m-1}, x-x)\right]p(s_{m-1}-s_{m-2},x-u_{\sigma_{m-2}}) \bigg\}\\
&=\sum_{\sigma\in\mathfrak{S}_m} \int_{D_m}\!\! d\bar s\, \square^{m-2}([\bar x_{\hat m,\widehat{m-1}} ,\bar y_{\hat m,\widehat{m-1}} ])\prod_{j=1}^{m -2}
p(s_j-s_{j-1},u_{\sigma_j}-u_{\sigma_{j-1}}) \\
& \qquad \left[ p(s_{m-1}-s_{m-2},y-u_{\sigma_{m-2}})+p(s_{m-1}-s_{m-2},x-u_{\sigma_{m-2}}) \right] 
\\&\qquad\times \left[p(s_m-s_{m-1}, 0)- p(s_m-s_{m-1}, x-y)\right]\,.
  \end{align*}
If we continue to apply the operator $V$ this way,  we shall obtain
  \begin{multline}
   \EE\left[L([\xi,\eta],x)-L([\xi,\eta],y)\right]^{2n}\\
 =(2n)!\int_{D_{2n}}\prod_{k=2}^{2n}\left[p(s_{k}-s_{k-1},0)+(-1)^{k+1}
 p(s_{k}-s_{k-1},x-y)\right]\left[p(s_{1},x)+p(s_{1},y)\right]d\bar s\,.\label{id.Lxy}
  \end{multline}

  The estimate \eqref{ineq.Lxy} follows from \eqref{id.Lxy} and the following inequality
  \begin{multline*}
     \int_a^b \int_s^b (t-s)^\gamma[p(t-s,0)-p(t-s,x-y)][p(s-a,0)+p(s-a,x-y)]dtds
     \\\le c_{\beta,\gamma}|b-a|^{\gamma+1-\beta}|x-y|^{2 \beta}\,,
  \end{multline*}
 is valid for all $\beta\in [0,1/2]$ and $\gamma\ge 0$.

Now  we assume a condition which is slightly more restricted than
\eref{cond:xyk}:
\begin{equation}
\label{e.xyk}
    x_1<y_1<x_2<y_2<\cdots<x_{2n}<y_{2n}\,.
\end{equation}
The functions $p(t, x)$ and   all its partial derivatives are
continuously differentiable on the any interval $(t, x)\in [0,
\infty)\times (-\infty, -a]\cup [a, \infty)$ for any positive $a$.
Thus the function on the right hand side of \eqref{jmoment} is continuously differentiable on
the $[\bar x,\bar y]$ satisfying \eref{e.xyk}. Using the equations
\eref{id.sq-part}, \eref{id.L-sq} and interchanging order of integrations, we have
\begin{multline}
    \EE \prod_{k=1}^m  \left(L_B([\xi,\eta],y_k)-L_B([\xi,\eta],x_k)\right)
\\=\sum_{\sigma\in\mathfrak{S}_m} \int_{D_m}\!\! d\bar s\,
\int_{[\bar x,\bar y]}\, d\bar z\, \frac{\partial ^m}{\partial z_1\cdots
\partial z_m} \prod_{j=1}^m p(s_j-s_{j-1},
z_{\sigma_j}-z_{\sigma_{j-1}}).\label{id.L-sq2}
  \end{multline}
   Notice that each partial derivative $\partial/\partial z_{\sigma_j}$
   contributes one derivative to either $p(s_j-s_{j-1},z_{\sigma_j}-
   z_{\sigma_{j-1}})$ or $p(s_{j+1}-s_{j},z_{\sigma_{j+1}}-z_{\sigma_j})$.
   We record the results  by a binary index $e_j$, $e_j=1$ represents the
   former case, $e_j=0$ represents the later case. Moreover, if the later
   case happens, it also contributes a factor $-1$. Since $z_m$ only appears
   in the last term $p(s_{m}-s_{m-1},z_{\sigma_m}-z_{\sigma_{m-1}})$, we must
   have the restriction $e_m=1$. Thus, we can write \eref{id.L-sq2} as
 \begin{multline}
 \EE \prod_{k=1}^m \left(L_B([\xi,\eta],y_k)-L_B([\xi,\eta],x_k)\right)
\\=\sum_{\sigma\in\mathfrak{S}_m}\,
 \sum_{ \bar e\in  \mathfrak{E}  }\sgn( \bar e)   \int_{D_m}\!\! d\bar s
 \, \int_{[\bar x,\bar y]}\, d\bar z\,\prod_{j=1}^m  p^{(e_j+1-e_{j-1})}
 ({s_j-s_{j-1}},z_{\sigma_j}-z_{\sigma_{j-1}})\,,\label{id.Lxyk}
  \end{multline}
where    $\mathfrak{E}$ denotes  all  the $m$-tuple
$\bar e=(e_1,\dots,e_m)\in \{0, 1\}^m$  such that $e_m=1$ and $\sgn (\bar e)$ is the sign of $\bar e$,    defined by $\sgn\bar e:=(-1)^{\sum_{j=1}^m (1-e_j)}$ and $e_0=1$ by convention.

  For instance, in the case $m=4$, when $\sigma$ is the identity map in $\mathfrak{S}_4$ , the integrand in \eqref{id.Lxyk} is
  \begin{equation}\label{id.1.7}
    \begin{split}
    &-p_{s_4-s_3}''(z_4-z_3)p_{s_3-s_2} '(z_3-z_2)p_{s_2-s_1}'(z_2-z_1) p_{s_1}(z_1)\\
    &+p_{s_4-s_3}'(z_4-z_3)p_{s_3-s_2} ''( z_3-z_2)p_{s_2-s_1}'( z_2-z_1) p_{s_1}(z_1)
      \\
    &+p_{s_4-s_3}''(z_4-z_3)p_{s_3-s_2} (z_3-z_2)p_{s_2-s_1}''( z_2-z_1) p_{s_1}(z_1)  \\
    &- p_{s_4-s_3}'(z_4-z_3)p_{s_3-s_2} '(z_3-z_2)p_{s_2-s_1}''( z_2-z_1) p_{s_1}(z_1)\\
    &+p_{s_4-s_3}''(z_4-z_3)p_{s_3-s_2} '(z_3-z_2)p_{s_2-s_1} ( z_2-z_1) p_{s_1}'(z_1) \\
    &-p_{s_4-s_3}'(z_4-z_3)p_{s_3-s_2} ''( z_3-z_2)p_{s_2-s_1} ( z_2-z_1) p_{s_1}'(z_1)
      \\
    &-p_{s_4-s_3}''(z_4-z_3)p_{s_3-s_2} (z_3-z_2)p_{s_2-s_1}' ( z_2-z_1) p_{s_1}'(z_1)  \\
    &+p_{s_4-s_3}'(z_4-z_3)p_{s_3-s_2} '(z_3-z_2)p_{s_2-s_1}' ( z_2-z_1) p_{s_1}'(z_1)\,.
    \end{split}
  \end{equation}

  Combining the estimate in Lemma \ref{lem.Dm} (below) with \eqref{id.Lxyk}, we see that there exists a constant $c_n$ depending only on $n$ such that
  \begin{equation}\label{ineq.Lt1}
    \left|\EE \prod_{k=1}^{2n} \left(L_B([\xi,\eta],y_k)-L_B([\xi,\eta],x_k)\right)\right|\le c_{n} \prod_{j=1}^{2n} |x_j-y_j|\,.
  \end{equation}
  An application   \eqref{ineq.Lxy} with $\beta=0$ yields
  \begin{equation}\label{ineq.Lt2}
    \left|\EE \prod_{k=1}^{2n} \left(L_B([\xi,\eta],y_k)-L_B([\xi,\eta],x_k)\right)\right|\le c_{n}|\eta-\xi|^{n}\,.
  \end{equation}
  Now given $\alpha\in[0,1]$, an interpolating between \eqref{ineq.Lt1} and \eqref{ineq.Lt2} yields
  \begin{equation*}
        \left|\EE \prod_{k=1}^{2n} \left(L_B([\xi,\eta],y_k)-L_B([\xi,\eta],x_k)\right)\right|\le c_{\alpha,n}|\eta-\xi|^{n\alpha}\prod_{j=1}^{2n} |x_j-y_j|^{1-\alpha}\,.
  \end{equation*}
This is \eref{ineq.Lxyk} under the condition \eqref{e.xyk}. The
estimate \eqref{ineq.Lxyk} under the general condition
\eqref{cond:xyk} follows by a limiting argument since both sides of
\eqref{ineq.Lxyk} are continuous function of $x_k,y_k$'s. This finishes  the proof of Proposition~\ref{Prop.moment} modulo the proof of the following lemma
which was  used in the above proof.
  \begin{lemma}\label{lem.Dm} Let $\bar e=(e_1,\dots,e_m)$ ($m\ge2$) be an m-tuple
  in $\{0,1\}^m$ such that $e_m=1$ and we take $e_0=1$ by convention.
  Let $u_k$ $(k=1,2,\dots,m)$ be non-zero real numbers and let
  $D_m$ be the domain $\{\bar s \in [\xi,\eta]^m:\xi<s_1<\cdots<s_m<\eta \}$.
  Then the  following estimate holds
  \begin{equation}\label{ineq.Dm}
    \left|\int_{D_m} \prod_{j=1}^m  p^{(e_j+1-e_{j-1})}({s_j-s_{j-1}},u_j)\,d\bar s\right|\le 1\,.
  \end{equation}
  \end{lemma}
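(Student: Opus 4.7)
The plan is to compute
$$I(T) := \int_{D_m} \prod_{j=1}^m p^{(A_j)}(s_j - s_{j-1}, u_j)\, d\bar s, \qquad A_j := e_j + 1 - e_{j-1} \in \{0,1,2\},$$
explicitly via a Laplace transform, and identify it (up to a sign) with the distribution function of a Brownian hitting time, which is automatically bounded by $1$.

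First I change variables $\tau_j = s_j - s_{j-1}$, taking $s_0 := \xi$, so $\tau_j > 0$ and $\sum_j \tau_j < T := \eta-\xi$. The integral becomes a convolution of the one-variable functions $p^{(A_j)}(\cdot, u_j)\mathbf{1}_{(0,\infty)}$ integrated against $\mathbf{1}_{[0,T]}$, so its Laplace transform in $T$ factors as
$$\widehat I(\lambda) = \frac{1}{\lambda} \prod_{j=1}^m \widehat{p^{(A_j)}}(\lambda,u_j).$$
From the classical identity $\widehat p(\lambda,u) = (2\lambda)^{-1/2} e^{-|u|\sqrt{2\lambda}}$, differentiating in $u$ (valid for $u\neq 0$ since $p^{(k)}(\tau,u) \to 0$ at all orders as $\tau \downarrow 0$) yields $\widehat{p'}(\lambda,u) = -\sgn(u)\, e^{-|u|\sqrt{2\lambda}}$ and $\widehat{p''}(\lambda,u) = \sqrt{2\lambda}\, e^{-|u|\sqrt{2\lambda}}$, which unify as $\widehat{p^{(A_j)}}(\lambda,u_j) = \sigma_j\,(2\lambda)^{(A_j-1)/2}\, e^{-|u_j|\sqrt{2\lambda}}$ for some signs $\sigma_j \in \{\pm 1\}$.

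The crucial cancellation now uses the boundary condition $e_0 = e_m = 1$: telescoping,
$$\sum_{j=1}^m (A_j - 1) \;=\; \sum_{j=1}^m (e_j - e_{j-1}) \;=\; e_m - e_0 \;=\; 0,$$
so all powers of $\sqrt{2\lambda}$ disappear from the product. Writing $U := \sum_j |u_j|$ and $C := \prod_j \sigma_j \in \{\pm 1\}$, we get $\widehat I(\lambda) = (C/\lambda)\, e^{-U\sqrt{2\lambda}}$. By the well-known identity $\EE e^{-\lambda \tau_U} = e^{-U\sqrt{2\lambda}}$ for the hitting time $\tau_U := \inf\{t \ge 0 : B_t = U\}$ of a standard Brownian motion, $\lambda^{-1} e^{-U\sqrt{2\lambda}}$ is precisely the Laplace transform of $T \mapsto P(\tau_U \le T)$. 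Inversion therefore yields $I(T) = C \cdot P(\tau_U \le T)$, and the claimed bound $|I(T)| \le 1$ is immediate.

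The heart of the argument -- and the only place the hypothesis is used -- is the exponent cancellation $\sum(A_j-1) = 0$. Without $e_0 = e_m = 1$, the product would carry an uncancelled factor $(\sqrt{2\lambda})^{\pm 1}$ and the inverse transform would fail to be uniformly bounded (giving either a $p(T,U)$-type term blowing up as $T \downarrow 0$, or an antiderivative of $p(\cdot, U)$ unbounded as $T \to \infty$). The Fubini swap, differentiation under the integral for $\widehat{p'}$ and $\widehat{p''}$, and the Laplace inversion are all routine under the standing assumption $u_j \neq 0$.
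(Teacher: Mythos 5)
Your Laplace--transform computation and the telescoping identity $\sum_{j}(A_j-1)=e_m-e_0=0$ are exactly the engine of the paper's proof, and your argument is a correct, cleanly unified treatment of the integral in which the \emph{first} time increment is $s_1-\xi$. The gap is that this is not the integral on which the lemma is actually invoked. In \eqref{jmoment} and \eqref{id.Lxyk} the convention inherited from the Kac moment formula is $s_0=0$ (the Brownian motion starts at time $0$); accordingly the paper's own proof writes $J=\int_\xi^\eta p^{(e_1)}(s_1,\cdot)\,f(\eta-s_1)\,ds_1$, with the first kernel evaluated at $s_1$, not at $s_1-\xi$. With $s_0=0$ and $\xi>0$, the change of variables $\tau_j=s_j-s_{j-1}$ does not produce a pure convolution restricted to $\{\sum_j\tau_j<T\}$: the first increment is separately constrained to $(\xi,\eta)$ while the others are constrained only through the sum. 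Hence the factorization $\widehat I(\lambda)=\lambda^{-1}\prod_j\widehat{p^{(A_j)}}(\lambda,u_j)$ fails, and the one-line inversion to $C\,P(\tau_U\le T)$ does not apply to the quantity that Proposition~\ref{Prop.moment} needs.

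The repair is precisely what the paper does, and it is where the case split you avoided becomes unavoidable. Peel off the first factor — note $A_1=e_1+1-e_0=e_1\in\{0,1\}$, so it is $p$ or $p'$, each of constant sign in $s$ for fixed $u_1\neq0$ — and apply your Laplace computation only to the convolution of factors $2,\dots,m$ together with the outer antiderivative. The residual power of $\sqrt{2\lambda}$ is then $(\sqrt{2\lambda})^{1-e_1}$ rather than $(\sqrt{2\lambda})^{0}$, so the inverse transform is (up to sign and a harmless constant) $p(t,\sum_{j\ge2}|u_j|)$ when $e_1=0$ and $\mathrm{erfc}\bigl(\sum_{j\ge2}|u_j|/\sqrt{2t}\bigr)$ when $e_1=1$; both are nonnegative, so one may enlarge the $s_1$-integration to $(0,\eta)$ and bound $|J|$ by $\int_0^\eta p(s,u_1)p(\eta-s,|u'|)\,ds\lesssim 1$ in the first case and by $\int_0^\infty|p'(s,u_1)|\,ds=1$ in the second. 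So the estimate survives and your key cancellation is still what keeps the leftover exponent in $\{0,1\}$, but your proof as written establishes a different (shifted) integral and needs this extra peeling step to prove the lemma as it is used.
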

  \begin{proof}
    We denote $\cL$ the Laplace transform with respect to the $t$ variable and put
    \begin{equation}
      J=\int_{D_m} \prod_{j=1}^m p^{(e_j-e_{j-1}+1)}(s_j-s_{j-1},u_j)\,d \bar s\,.
    \end{equation}
    Let $*$ denote the convolution operator, i.e. for two functions $f$ and $g$, $f*g(t)=\int_0^t f(s)g(t-s)ds $. Then we can rewrite $J$ into the form
    \begin{equation*}
      J=\int_{\xi}^{\eta} p^{(e_1)} (s_1,z_{\sigma_1}) f(\eta-s_1)  \,d s_1\,,
    \end{equation*}
    where $f$ is the function defined by
    \begin{equation*}
      f(t)=\int_0^t[p^{(e_2-e_{1}+1)}(\cdot,u_2)*\cdots*p^{(e_m-e_{m-1}+1)}(\cdot,u_m)](s)ds\,.
    \end{equation*}
    It is well known (see for example, \cite{gr}, Formula 3.471 (9)  and Formula 8.469 (3))
   that
    \begin{equation}\label{lap.p}
      \cL [p(\cdot,x)](s)=\frac{1}{\sqrt{2s}}e^{-|x|\sqrt{2s}}\,.
    \end{equation}
    By taking derivative under the integral sign (noticing that we assume $x\not=0$), we obtain
    \begin{equation*}
      \cL [ p'(\cdot,x)](s)=-\sgn(x)e^{-|x|\sqrt{2s}}\,.
    \end{equation*}
    We further notice that $p''=2\partial_t p$, thus
    \begin{equation*}
      \cL [p''(\cdot,x)](s)={\sqrt{2s}}e^{-|x|\sqrt{2s}}\,.
    \end{equation*}
    Writing all three formulas in one, for $k=0,1,2$,
    we have
    \begin{equation}
      \cL [p^{(k)}(\cdot,x)](s)=(\sqrt{2s})^{k-1}[-\sgn(x)]^{k} e^{-|x|\sqrt{2s}}\,.
    \end{equation}
    Since convolution becomes product under Laplace transform, the Laplace transform of $f$ is
    \begin{align*}
      \cL [f](s)&= s^{-1}\prod_{j=2}^m (\sqrt{2s})^{e_j-e_{j-1}}
      [-\sgn(u_j)]^{e_j-e_{j-1}+1}\exp\left\{-|u_j| \sqrt{2s} \right\}\\
      &=\sqrt{2}
      (\sqrt{2s})^{-1-e_1}\exp\left\{-\sqrt{2s}\sum_{j=2}^m|u_j|
      \right\}\prod_{j=2}^m [-\sgn(u_j)]^{e_j-e_{j-1}+1} \,,
    \end{align*}
    where the  factor $s^{-1}$ comes from the fact that the Laplace
    transform of $\int_0^t f(r) dr$ is $s^{-1} \cL f(s)$.
    To simplify notations, we will denote $|u|=\sum_{j=2}^m|u_j|$. We consider now two cases. Case 1: $e_1=0$. Inverting the Laplace transform, using \eqref{lap.p}, we see that
    \begin{equation}
      f(t)=\sqrt{2}\prod_{j=2}^m [-\sgn(u_j)]^{e_j-e_{j-1}+1} p(t,|u|)\,.
    \end{equation}
    Thus
    \begin{align*}
      |J|\le \sqrt2 \int_{\xi}^{\eta}p(s_1,u_1)p(\eta-s_1,|u|)d s_1\le 1/\sqrt2\,.
    \end{align*}
    Case 2: $e_1=1$. We notice that
    \begin{equation*}
      \cL \left[\mathrm{erfc }\left(\frac{|x|}{\sqrt{2t}}\right) \right](s)=\frac1s e^{-|x|\sqrt{2s}}\,,
    \end{equation*}
    where $\mathrm{erfc }(z)$ is the complementary error function
    $\mathrm{erfc }(z) =\frac2{\sqrt\pi}\int_z^\infty e^{-y^2}dy $.
    Inverting the Laplace transform as in the former case, we obtain
    \begin{equation}
      f(t)=\prod_{j=2}^m [-\sgn(u_j)]^{e_j-e_{j-1}+1} \mathrm{erfc }
      \left(\frac{|u|}{\sqrt{2t}} \right)\,.
    \end{equation}
    Thus if we use the fact that $0\le \mathrm{erfc }(z) \le 1$, we
    have
    \begin{align*}
      |J|&\le \int_{\xi}^{\eta}|p'(s_1,u_1)|\mathrm{erfc }
      \left(\frac{|u|}{\sqrt{2(\eta-s_1)}} \right)d s_1\\
      &\le \int_{\xi}^{\eta} \frac{|u_1|}{\sqrt{2\pi s_1}}e^{-\frac{|u_1|^2}{2s_1}}\frac{ds_1}{s_1}\,.
    \end{align*}
    By the change of variable $t=\frac{|u_1|}{\sqrt{2s_1}}$, we see that
     $ |J|\le \frac{2}{\sqrt\pi}\int_{0}^\infty e^{-t^2}dt\le 1$.
  \end{proof}

\section{Proof of Proposition~\ref{prop.convg}}\label{sec.convg}
\setcounter{equation}{0}
    To outline the strategy proving Proposition \ref{prop.convg}, let us first
  observe that using the representation $X_\pi=S_{W_\pi}^{-1}\circ B\circ T_{W_\pi,B}^{-1}$
  we can write
  \begin{multline*}
    \int_0^t g(X_\pi(s),W_\pi\circ (X_\pi(s))\dot{W}_\pi(X_\pi(s))ds
    \\=\int_{-\infty}^\infty g(x,W_\pi(x))e^{-W_\pi(x)} L_B(T^{-1}_{W_\pi,B}(t) ,S_{W_\pi}(x))\dot{W}_\pi(x)\,dx\,.
  \end{multline*}
  We observe that from Lemma~\ref{lemma_inverse}, with probability one, $T_{W_\pi,B}^{-1}(\cdot)$    converges to $T_{W,B}^{-1}(\cdot) $ uniformly over compacts of $\RR_+$. In addition, the function $e^{-u}$ can be combined with $g(x,u)$. Therefore, to prove Proposition \ref{prop.convg}, it suffices to show
  \begin{itemize}
    \item For every function $g$ satisfying conditions \eqref{cond.g1} and \eqref{cond.g2}, with probability one, the process $\xi\mapsto\int_{-\infty}^\infty g(x,W_\pi(x)) L_B(\xi ,S_{W_\pi}(x))\dot{W}_\pi(x)\,dx$ converges
    to $\int_{-\infty}^\infty g(x,W(x))L_B(\xi,S_W(x))W(d^ox)$ uniformly over compact sets.
  \end{itemize}
  The remaining of this section is devoted to verify the previous statement. In what follows, $\{\ell_\pi(g,\xi),\xi\ge0\} $ denote the process
  \begin{equation}\label{def.ellpi}
    \ell_\pi(g,\xi)=\int_\RR g(x,W_\pi(x))L_B(\xi ,S_{W_\pi}(x))\dot{W}_\pi(x)\,dx\,,
  \end{equation}
  which is well-defined for all continuous sample paths of $W$. For every compact set $K$, we denote
  \begin{equation*}
    c_3(K)=c_1(K)+c_2(K)
  \end{equation*}
  where $c_1$ and $c_2$ are the constant in \eqref{cond.g1} and \eqref{cond.g2}.

  In subsection \ref{ss.bded}, we will truncate the processes
  $\ell_\pi$ and show the corresponding truncated processes converges uniformly.
  In subsection \ref{ss.rr}, the claim is verified completely via a gluing argument.

  Let us remark that for all the results in this section holds, we employ the two estimates \eqref{ineq.Lxy} and \eqref{ineq.Lxyk} for the local time of Brownian motion $B$, $L_B$.
  \subsection{Convergence over bounded interval}\label{ss.bded}
  We consider an interval $[a,b]$ with length $L=b-a$. Let $\pi=\{a=x_{0}<x_{1}<\cdots<x_{n}=b\}$
  be a partition of $[a,b]$ with mesh size
  \[
  \Delta=\max_{k=0,\dots,n-1}|x_{k+1}-x_{k}|.
  \]
  We denote
  \begin{equation}\label{def.lab}
  \ell_{\pi}^{[a,b]}(g,\xi)  =  \int_{a}^{b}g(x,W_{\pi}(x))L_{B}(\xi,S_{W_{\pi}}(x))\dot{W}_{\pi}(x)dx\,,
  \end{equation}
where as usual $W_{\pi}$ is the linear interpolation of $W$ associated with $\pi$.

  We first decompose $\ell_{\pi}^{[a,b]}(g,\xi)$ as follows
  \begin{align*}
    \ell_{\pi}^{[a,b]}(g,\xi)
    &=\sum_{k=0}^{n-1} \int_{x_k}^{x_{k+1}}g(x,W_{\pi}(x))[L_B(\xi,S_{W_{\pi}}(x))-L_B(\xi,S_{W_{\pi}}(x_k))]\dot{W}_{\pi}(x)dx
    \\&\quad +\sum_{k=0}^{n-1}L_B(\xi,S_{W_{\pi}}(x_k)) \int_{x_k}^{x_{k+1}}[g(x,W_{\pi}(x))-g(x_k,W_{\pi}(x))]\dot{W}_{\pi}(x)dx
    \\&\quad +\sum_{k=0}^{n-1}L_B(\xi,S_{W_{\pi}}(x_k)) \int_{x_k}^{x_{k+1}}g(x_k,W_{\pi}(x))\dot{W}_{\pi}(x)dx
  \end{align*}
  Let $G$ be a function such that $\partial_u G(x,u)=g(x,u)$.
  The integral inside the last summand can be computed as follows
  \begin{align*}
    \int_{x_k}^{x_{k+1}}g(x_k,W_\pi(x))\dot{W}_\pi(x)dx&=G(x_k,W(x_{k+1}))-G(x_k,W(x_k))
    \\&=\int_{x_k}^{x_{k+1}}g(x_k,W(x))W(dx)+\frac12\int_{x_k}^{x_{k+1}}\partial_ug(x_k,W(x))dx\,,
  \end{align*}
  where the last line follows from the classical It\^o formula. Therefore, we can further decompose $\ell^{[a,b]}_\pi(g,\xi)$ as
  \begin{equation*}
    \ell_{\pi}^{[a,b]}(g,\xi)=I_1(\xi)+I_2(\xi)+I_3(\xi)+I_4(\xi)
  \end{equation*}
  where
  \begin{align*}
    I_1(\xi)&=\sum_{k=0}^{n-1} \int_{x_k}^{x_{k+1}}g(x,W_{\pi}(x))[L_B(\xi,S_{W_{\pi}}(x))-L_B(\xi,S_{W_{\pi}}(x_k))]\dot{W}_{\pi} (x)dx\,,\\
    I_2(\xi)&=\sum_{k=0}^{n-1} L_B(\xi,S_{W_{\pi}}(x_k))\int_{x_k}^{x_{k+1}}[g(x,W_{\pi}(x))-g(x_k,W_{\pi}(x))]\dot{W}_{\pi}(x)dx\,,\\
    I_3(\xi)&=\sum_{k=0}^{n-1} \int_{x_k}^{x_{k+1}}L_B(\xi,S_{W_{\pi}}(x_k))g(x_k,W(x)) W(dx)\,,\\
    I_4(\xi)&=\frac12 \sum_{k=0}^{n-1}\int_{x_k}^{x_{k+1}} \partial_ug(x_k,W(x))L_B(\xi,S_{W_{\pi}}(x_k))dx\,.
  \end{align*}
To simplify notation,  we omit dependence of $I_i$'s on $g$.
  For a generic function $f$ on $\RR$, we will denote $$f([\xi,\eta])\equiv f(\eta)-f(\xi),\;\forall \eta,\xi\in \RR.$$

  \begin{lemma}\label{lem.bded} Suppose $g$ satisfies the conditions in Proposition~\ref{prop.convg}. There exist positive constants $\epsilon, \gamma, \kappa$ which does not depend on $(a,b)$ such that the following estimates holds: for all $\eta,\xi\in\RR_+$,
  \begin{equation}
  	\EE |I_1([\xi,\eta])|^6\lesssim c_1^6(b-a) e^{\kappa (|a|\vee|b|)} |\eta- \xi|^{1+\epsilon} \Delta^{\gamma}\,,\label{est.I1}
  \end{equation}
  \begin{equation}
  	\EE |I_2([\xi,\eta])|^6 \lesssim c_2^6(b-a) e^{\kappa (|a|\vee|b|)}|\eta- \xi|^{1+\epsilon} \Delta^{\gamma}\,,\label{est.I2}
  \end{equation}
  \begin{multline}
  	\EE |I_3([\xi,\eta])-\int_{a}^{b}g(x,W(x))L_B([\xi,\eta],S_W(x))W(dx)|^6 
  	\\\lesssim c_3^6(b-a)  e^{\kappa (|a|\vee|b|)}|\eta- \xi|^{1+\epsilon} \Delta^{\gamma}\,,\label{est.I3}
  \end{multline}
  \begin{multline}
  	\EE |I_4([\xi,\eta])- \frac12\int_{a}^{b}\partial_ug(x,W(x))L_B([\xi,\eta],S_W(x))dx|^6 
  	\\\lesssim c_3^6(b-a) e^{\kappa (|a|\vee|b|)}|\eta- \xi|^{1+\epsilon} \Delta^{\gamma}\,,\label{est.I4}
  \end{multline}
    where the implied constants depend only on $b-a$. As a consequence, for all $\eta,\xi\in\RR_+\,,$
    \begin{multline}\label{est.lab}
      \EE |\ell_\pi^{[a,b]}(g,[\xi,\eta]) -\int_a^b g(x,W(x))L_B([\xi,\eta],S_W(x))W(d^ox) |^6
      \\\lesssim c_3^6(b-a) e^{\kappa (|a|\vee|b|)}|\eta- \xi|^{1+\epsilon} \Delta^{\gamma}  \,.
    \end{multline}
  \end{lemma}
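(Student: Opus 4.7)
The plan is to bound the sixth moment of each of $I_1, I_2$ and of the differences of $I_3$ and $I_4$ from their respective limiting integrals separately, and then combine via Minkowski's inequality together with the definition \eqref{strat_int} of the Stratonovich integral. The unifying device is to condition on the $\sigma$-algebra generated by $W$: once $W$ is fixed, $\dot W_\pi$ and $S_{W_\pi}$ are deterministic, and Proposition~\ref{Prop.moment} for the local time of the independent Brownian motion $B$ applies inside the conditional expectation. The exponential factor $e^{\kappa(|a|\vee|b|)}$ is extracted at the end from \eqref{cond.g1}--\eqref{cond.g2} together with Gaussian tail estimates for $\|W\|_{\infty,[a,b]}$.

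For $I_1$, I would expand $|I_1([\xi,\eta])|^6$ as a 6-fold sum-integral indexed by $(k_1,\ldots,k_6)$ and sample points $x_j'\in[x_{k_j},x_{k_j+1}]$, and observe that, conditionally on $W$, the spatial intervals $[S_{W_\pi}(x_{k_j}),S_{W_\pi}(x_j')]$ are either pairwise disjoint or coincide. Proposition~\ref{Prop.moment}(ii) (combined with (i) for repeated indices) then bounds the conditional expectation of the product of six local-time increments by $|\eta-\xi|^{3\alpha}\prod_j|S_{W_\pi}(x_j')-S_{W_\pi}(x_{k_j})|^{1-\alpha}$ for any $\alpha\in[0,1]$. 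Inserting $|S_{W_\pi}(x_j')-S_{W_\pi}(x_{k_j})|\lesssim \Delta\, e^{\|W\|_{\infty,[a,b]}}$, the $L^6(W)$-scaling $\|\dot W_\pi\|_{L^6}\lesssim \Delta^{-1/2}$ per slope factor, and summing over the $\sim (b-a)\Delta^{-1}$ partition intervals (using independence of slopes across distinct intervals to keep the final constant linear in $(b-a)$), the choice of $\alpha\in(1/3,1/2)$ delivers both a positive power of $\Delta$ and an exponent strictly greater than $1$ for $|\eta-\xi|$, yielding \eqref{est.I1}. The bound \eqref{est.I2} for $I_2$ is analogous but simpler: the undifferenced factors $L_B([\xi,\eta],S_{W_\pi}(x_{k_j}))$ are bounded in $L^p(B)$ by Proposition~\ref{Prop.moment}(i) with $\beta=0$, and the necessary $\Delta^{6\lambda}$ smallness comes from \eqref{cond.g2} applied to $|g(x,W_\pi(x))-g(x_{k_j},W_\pi(x))|\lesssim \Delta^\lambda$; the assumption $\lambda>1/2$ is exactly what beats the $\Delta^{-3}$ cost of the six $\dot W_\pi$ factors.

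For $I_3$, the difference with the target is a single It\^o integral $\int_a^b\psi_\pi(x)W(dx)$ with integrand $\psi_\pi(x)=g(x_{k(x)},W(x))L_B([\xi,\eta],S_{W_\pi}(x_{k(x)}))-g(x,W(x))L_B([\xi,\eta],S_W(x))$, where $k(x)$ is the index with $x\in[x_{k(x)},x_{k(x)+1}]$. The Burkholder--Davis--Gundy inequality and H\"older in $x$ reduce matters to a pointwise sixth-moment bound on $\psi_\pi(x)$, which I would split telescopically into three pieces: replacing $x_{k(x)}$ by $x$ inside $g$ (controlled by \eqref{cond.g2} with exponent $\lambda$), inside $L_B$ via $S_{W_\pi}$ (controlled by Proposition~\ref{Prop.moment}(i) with $\beta$ close to $1/2$), and finally replacing $W_\pi$ by $W$ inside $S$ (same proposition, combined with $\|S_W-S_{W_\pi}\|_{\infty,[a,b]}\lesssim \Delta\, e^{\|W\|_{\infty,[a,b]}}$). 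The estimate \eqref{est.I4} for $I_4$ is a pathwise Riemann-sum approximation relying on the same H\"older ingredients. The combined bound \eqref{est.lab} is then immediate from $\ell_\pi^{[a,b]}=I_1+I_2+I_3+I_4$ and \eqref{strat_int}.

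The main obstacle is the tight $\Delta$-accounting for $I_1$: six $\dot W_\pi$ factors cost $\Delta^{-3}$ in $L^6(W)$, so Proposition~\ref{Prop.moment}(ii) must supply strictly more than $\Delta^3$ of smallness from the six local-time increments, forcing $\alpha<1/2$, while the prefactor $|\eta-\xi|^{3\alpha}$ must exceed $|\eta-\xi|$, forcing $\alpha>1/3$. The non-emptiness of the window $(1/3,1/2)$ is the structural reason the argument succeeds and is where the new estimate \eqref{ineq.Lxyk} is used essentially; keeping the constants linear in $(b-a)$ and extracting the factor $e^{\kappa(|a|\vee|b|)}$ uniformly in $(a,b)$ is the remaining bookkeeping, handled routinely via H\"older applied to the exponential moments of $\|W\|_{\infty,[a,b]}$.
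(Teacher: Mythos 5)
Your proposal is correct and follows essentially the same route as the paper: expand $\EE|I_1([\xi,\eta])|^6$ into diagonal and fully off-diagonal sums, condition on $W$ and apply Proposition~\ref{Prop.moment}(i) (with suitable $\beta$) to the former and \eqref{ineq.Lxyk} with $\alpha\in(1/3,1/2)$ to the latter, use \eqref{cond.g2} with $\lambda>1/2$ against the $\Delta^{-3}$ cost of the slopes for $I_2$, and handle $I_3$, $I_4$ by the martingale moment inequality plus H\"older together with $|S_W(x)-S_{W_\pi}(x_k)|\lesssim\Delta e^{\|W\|_\infty}$. The only cosmetic differences (a three-step telescoping for $I_3$ instead of the paper's two-term split into $D$ and $\tilde D$, and the unnecessary insistence on linearity in $b-a$) do not change the argument.
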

  \begin{proof}
    To deal with $I_1$,  we denote
    \[a_k=\int_{x_{k}}^{x_{k+1}}g(x,W_{\pi}(x))\left[L_{B}([\xi,\eta],S_{W_{\pi}}(x))-L_{B}([\xi,\eta],S_{W_{\pi}}(x_{k}))\right]\dot{W}_{\pi}(x) dx\,.\]
    Then
    \begin{align*}
      &\EE I_1([\xi,\eta])^6 
      \\&= \sum_{k_1=0}^{n-1}\EE a_{k_1}^6+  6\sum_{k_1\neq k_2}\EE a^5_{ k_1}a_{k_2}
       + 15\sum_{k_1,k_2}\EE a^4_{k_1}a^2_{k_2}+30\sum_{k_1,k_2,k_3}\EE a^4_{k_1}a_{k_2}a_{k_3}\\
      &\quad+ 20\sum_{k_1,k_2}\EE a^3_{k_1}a^3_{k_2}+60\sum_{k_1,k_2,k_3}\EE a^3_{k_1}a^2_{k_2}a_{k_3}+120\sum_{k_1,k_2,k_3,k_4}\EE a^3_{k_1}a_{k_2}a_{k_3}a_{k_4} \\
      &\quad+90\sum_{k_1,k_2,k_3}\EE a^2_{k_1}a^2_{k_2}a^2_{k_3}+180\sum_{k_1,k_2,k_3,k_4}\EE a^2_{k_1}a^2_{k_2}a_{k_3}a_{k_4}
      \\&\quad+360\sum_{k_1,k_2,k_3,k_4,k_5}\EE a^2_{k_1}a_{k_2}a_{k_3}a_{k_4}a_{k_5} 
      +6!\sum_{k_1,k_2,k_3,k_4,k_5,k_6}\EE a_{k_1}a_{k_2}a_{k_3}a_{k_4}a_{k_5}a_{k_6}
    \end{align*}
    where the indices $k_1,\dots,k_6$ are pairwise disjoint if they appear under
    the same summation notation. Among these sums, the most difficult
    term to estimate is the last one. All other sums can be handled
    by mean of the H\"older inequality and \eqref{ineq.Lxy}
    (similar to the method of estimating $A$ below).
    To illustrate our method while maintain a decent length of the paper,
    we will give detailed estimates for the two sums
    \begin{eqnarray*}
      &&A = \sum_{k_1,k_2,k_3,k_4,k_5}\EE a^2_{k_1}a_{k_2}a_{k_3}a_{k_4}a_{k_5}, \\
      &&\tilde A =  \sum_{k_1,k_2,k_3,k_4,k_5,k_6}\EE a_{k_1}a_{k_2}a_{k_3}a_{k_4}a_{k_5}a_{k_6}\,.
    \end{eqnarray*}
    To avoid lengthy formula, we denote $\Delta_k W=W(x_{k+1})-W(x_{k})$,
    $\Delta_k=x_{k+1}-x_{k}$. We also omit the indices under the sigma notation. By
    the Cauchy-Schwarz inequality and \eqref{cond.g1}
    \begin{equation*}
      a_k^2\le c_1^2(b-a)\frac{|\Delta_kW|^2}{\Delta_k}\int_{x_{k}}^{x_{k+1}}e^{2\theta |W_{\pi}(z)|}[L_B([\xi,\eta], S_{W_{\pi}}(z))-L_B([\xi,\eta],S_{W_{\pi}}(x_{k}))]^2dz\,.
    \end{equation*}
    Hence, $A$ is bounded  from the above by
    \begin{multline*}
      c_1^6(b-a)\sum \EE\int_{x_{k_1}}^{x_{k_1+1}} e^{2 \theta |W_{\pi}(z)|}[L_B([\xi,\eta], S_{W_{\pi}}(z))-L_B([\xi,\eta],S_{W_{\pi}}(x_{k_1}))]^2
      \\\frac{|\Delta_{k_1}W|^2}{\Delta_{k_1}}dz_1 a_{k_2}a_{k_3}a_{k_4}a_{k_5}\,.
    \end{multline*}
Taking the expectation with respect to the Brownian motion $B$ first
and applying \eqref{ineq.Lxy} with $\beta=1/2$
  we  see that $A$ is bounded  from the above by
    \begin{eqnarray*}
   &&   c_1^6(b-a)|\eta-\xi|^{3/2}  \sum \EE\int_{[x_{k}, {x_{k+1}} ]} e^{2 \theta|W_{\pi}(z_1)|+ \theta |W(z_2)|+\cdots+ \theta |W(z_5)| }
     \\ &&\quad 
 |S_{W_{\pi}}(z_1)-S_{W_{\pi}}(x_{k_1})|\prod_{j=2}^5|S_{W_{\pi}}(z_j)-S_{W_{\pi}}(x_{k_j})|^{1/2}
\frac{|\Delta_{k_1}W|^2}{\Delta_{k_1}}\prod_{j=2}^5
     \frac{|\Delta_{k_j}W| }{\Delta_{k_j}}d\bar z  \,,
    \end{eqnarray*}
    where $\int_{[x_{k},x_{k+1}]}d\bar z$ denotes $\prod_{j=1}^5\int_{x_{k_j}}^{x_{k_j+1}} dz_j$. We further apply the H\"older inequality and the simple estimate
    $\EE e^{\theta |W_{\pi}(z)|}\le e^{\theta^2 |z|/2}$. The above    quality  is
    bounded by a constant multiple of
    \begin{eqnarray*}
    &&  c_1^6(b-a)|\eta-\xi|^{3/2} \sum \int_{[x_{k},x_{k+1}]}
     \left\{\EE |S_{W_{\pi}}(z_1)-S_{W_{\pi}}(x_{k_1})
      |^2\prod_{j=2}^5|S_{W_{\pi}}(z_j)-S_{W_{\pi}}(x_{k_j})| \right\}^{1/2}\\
      &&\qquad  \times e^{\kappa(|z_1|+\cdots+ |z_5|)}  |\Delta_{k_j}|^{-1/2}d\bar z\,.
    \end{eqnarray*}
    Applying the  H\"older inequality again, we obtain
    \begin{eqnarray*}
      A\lesssim c_1^6(b-a)|\eta-\xi|^{3/2}\Delta\left(\int_a^b e^{\kappa |x|}dx\right)^5\,.
    \end{eqnarray*}
    To estimate $\tilde A$, we first take the expectation with respect to the Brownian motion $B$.
    Using \eqref{ineq.Lxyk} with $\alpha\in[0,1]$  we have
    \begin{eqnarray*}
\tilde A \lesssim c_1^6(b-a)|\eta- \xi|^{3 \alpha}\sum
\prod_{j=1}^6\int_{x_{k_j}}^{x_{k_j+1}}  \EE
e^{\theta W_{\pi}(z_j)}|S_{W_{\pi}}(z_j)-S_{W_{\pi}}(x_{k_j})|^{1-\alpha}\,\frac{\Delta_{k_j}W}{\Delta_{k_j}}\,dz_j\,.
    \end{eqnarray*}
    Applying the H\"older inequality  yields
    \begin{eqnarray}
      \tilde A\lesssim c_1^6(b-a)|\eta- \xi|^{3 \alpha} \Delta^{6(\frac12- \alpha)}\left(\int_a^b e^{\kappa |x|}dx\right)^6\,.
    \end{eqnarray}
    Choosing $\alpha$ between $1/3$ and $1/2$ yields \eqref{est.I1}.

    Proof of \eqref{est.I2}: From the H\"older inequality we have
    \begin{multline*}
      \EE |I_2([\xi,\eta])|^6
      \le (b-a)^5\times
      \\\EE \sum_{k=0}^{n-1}\int_{x_k}^{x_{k+1}} |g(x,W_{\pi}(x))-g(x_k,W_{\pi}(x))|^6 |L_B([\xi,\eta],S_{W_{\pi}}(x_k))|^6|\dot{W}_{\pi}(x)|^6dx\,.
    \end{multline*}
    An further application of the H\"older inequality, condition \eqref{cond.g2} and the estimate \eqref{ineq.Lxy}
    with $\beta=0$ yields
    \begin{equation*}
      \EE |I_1([\xi,\eta])|^6\lesssim (b-a)^5c_2^6(b-a)e^{\kappa(|a|\vee |b|)} |\eta-\xi|^3 \sum_{k=0}^{n-1}|x_{k+1}-x_k|^{6 \lambda-2}\,,
    \end{equation*}
    which implies \eqref{est.I2}. 

    Proof of \eqref{est.I3}: Applying the   moment inequality for martingales, we see that
    the expression on its left hand side is at most a constant times
    \begin{equation*}
      \sum_{k=0}^{n-1}\int_{x_{k}}^{x_{k+1}} \EE\left[g(x,W(x))L_{B}([\xi,\eta],S_{W}(x))-
      g(x_k,W(x)) L_{B}([\xi,\eta],S_{W_{\pi}}(x_{k}))\right]^{6}dx,
    \end{equation*}
    which is again bounded by the sum of a certain constant multiple of
\begin{equation*}
  D:=  c_1^6(b-a)  \sum_{k=0}^{n-1}\int_{x_{k}}^{x_{k+1}} \EE\left[ L_{B}([\xi,\eta],S_{W}(x))- L_{B} ([\xi,\eta],S_{W_{\pi}}(x_{k}))\right]^{6}e^{6 \theta|W(x)|}dx,
\end{equation*}
and
\begin{equation*}
\tilde   D:=      \sum_{k=0}^{n-1}\int_{x_{k}}^{x_{k+1}} \EE\left[
  L_{B}
      ([\xi,\eta],S_{W_{\pi}}(x_{k}))\right]^{6}\left[ g(x,W(x))-g(x_k,W(x))\right]^6 dx,
\end{equation*}
Similar to the estimation for $I_2$, it is easy to see that $\tilde D$ satisfies
\begin{equation*}
    \tilde D\lesssim c_2^6(b-a)e^{\kappa(|a|\vee|b|)}|\eta- \xi|^3\sum_{k=0}^{n-1}|x_{k+1}-x_k|^{6 \lambda-2}
\end{equation*}
which in turn satisfies the bound \eqref{est.I3}.

    By mean of inequality \eqref{ineq.Lxy} with $\beta\in(0,1/2]$, $D$ is bounded
     by a constant times
    \[
    c_1^6(b-a)|\eta- \xi|^{3(1-\beta)}\sum_{k=0}^{n-1}\int_{x_{k}}^{x_{k+1}}\EE|S_{W}(x)-S_{W_{\pi}}(x_{k})|^{6\beta}e^{6 \theta|W(x)|}dx.
    \]
    By the H\"older inequality, we see that above expression is at most a constant times
    \[c_1^6(b-a)|\eta- \xi|^{3(1- \beta)}|\Delta|^{3 \beta}\int_a^b e^{\kappa |x|}dx\,,\]
    which also yields  \eqref{est.I3}.

    Proof of \eqref{est.I4}: By the H\"older inequality, the quality on the left hand side of \eqref{est.I4} is at most a constant times
    \begin{equation*}
    \sum_{k=0}^{n-1}\int_{x_{k}}^{x_{k+1}}  \EE\left[\partial_ug(x,W(x)) L_{B}([\xi,\eta],S_{W}(x))-
    \partial_ug(x_k,W(x)) L_{B}([\xi,\eta],S_{W_{\pi}}(x_{k}))\right]^{6} dx\,.
    \end{equation*}
    From here, \eqref{est.I4} follows similarly.
  \end{proof}

  \subsection{Convergence over $\RR$}\label{ss.rr}
  Let $\gamma$ and $\kappa$ be the constants in Lemma \ref{lem.bded}. Let $\pi$ be a partition of $\RR$. For every $N\in\ZZ$, let $\pi_N$ be the partition on $[N-1,N]$ induced by $\pi$ and $|\pi_N|$ denote the mesh size of $\pi_N$. For every $\delta>0$, we now choose a partition $\pi(\delta)$ such that
  \begin{equation}\label{cond.pidel}
     \sum_{N} c_3([N-1,N]) (e^{\kappa|N|}|\pi_N|^\gamma)^{\frac16} \le\delta \,.
  \end{equation}
  With the notations in the previous subsection, the process $\ell_\pi(g,\cdot)$ (defined in \eqref{def.ellpi}) can be written as
  \begin{equation}
    \ell_\pi(g,\xi)=\sum_{N\in\ZZ} \ell_{\pi_N}^{[N-1,N]}(g,\xi)\,,\; \xi\geq 0,
  \end{equation}
  where $\ell_{\pi_N}^{[N-1,N]}(g,\cdot)$ is the process defined in \eqref{def.lab}. Finiteness of the process $\ell_\pi(g,\cdot)$ will become clear at the end of this subsection. For a random variable $Y$, we denote the $L^6$-norm $\|Y\|_6:=(\EE Y^6)^{1/6}$. To simply notations, we further denote
  \begin{equation*}
    \ell(g,[\xi,\eta])=\int_{-\infty}^\infty g(x,W(x))L_B(\xi,S_W(x))W(d^ox)
  \end{equation*}
  and
  \begin{equation*}
    \ell^{[N-1,N]} (g,[\xi,\eta])=\int_{N-1}^N g(x,W(x))L_B(\xi,S_W(x))W(d^ox)\,.
  \end{equation*} From the estimate \eqref{est.lab}, we obtain
  \begin{align*}
    \|\ell_\pi(g,[\xi,\eta])-\ell(g,[\xi,\eta])\|_6&\le \sum_{N\in\ZZ} \left\|\ell_{g,\pi_N}^{[N-1,N]}(g,[\xi,\eta])-\ell^{[N-1,N]} (g,[\xi,\eta]) \right\|_6\\
    &\lesssim  |\eta- \xi|^{(1+\epsilon)/6} \sum_{N\in\ZZ}c_3([N-1,N]) \left(|\pi_N|^{\gamma} e^{\kappa |N|} \right)^{1/6}\,.
  \end{align*}
  We now choose $\pi=\pi(\delta)$ and use the condition \eqref{cond.pidel} to obtain
  \begin{equation}
    \|\ell_{\pi(\delta)}(g, [\xi,\eta])-\ell(g,[\xi,\eta])\|_6\lesssim  |\eta- \xi|^{(1+\epsilon)/6}\delta\,.
  \end{equation}
  Let $K$ be any positive number. Applying the
  Garsia-Rodemich-Rumsey inequality (see \cite{garsiarodemich}), wee
  see that
  there exists a continuous version of the process $\ell_\pi(g,\cdot)-\ell(g,\cdot)$ which  satisfies the following estimate almost
  surely
  \begin{equation}
    \sup_{0<\xi<\eta<K} \frac{|\ell_{\pi(\delta)}(g,[\xi,\eta])-\ell(g,[\xi,\eta])|}{|\eta-\xi|^{\epsilon/8}}\le C_K  \delta\,.
  \end{equation}
  Since $\ell(g,\cdot)$ has a continuous version and is finite almost surely, this implies the same properties holds for $\ell_{\pi(\delta)}(g,\cdot)$. Moreover, we have also proved the uniform convergence
  \begin{equation}\label{uni.conv}
    \lim_{\delta\to 0}\sup_{0<\xi<\eta<K} \frac{|\ell_{\pi(\delta)}(g,[\xi,\eta])-\ell(g,[\xi,\eta])|}{|\eta-\xi|^{\epsilon/8}}=0.
  \end{equation}
which holds almost surely. This finishes the proof of Step 2, and hence of Proposition~\ref{prop.convg}.

\begin{bibdiv}
\begin{biblist}

\bib{AD}{article}{
      author={Andreoletti, Pierre},
      author={Diel, Roland},
       title={Limit law of the local time for {B}rox's diffusion},
        date={2011},
        ISSN={0894-9840},
     journal={J. Theoret. Probab.},
      volume={24},
      number={3},
       pages={634\ndash 656},
         url={http://dx.doi.org/10.1007/s10959-010-0314-7},
      review={\MR{2822476 (2012i:60152)}},
}

\bib{basschen}{article}{
      author={Bass, Richard~F.},
      author={Chen, Zhen-Qing},
       title={Stochastic differential equations for {D}irichlet processes},
        date={2001},
        ISSN={0178-8051},
     journal={Probab. Theory Related Fields},
      volume={121},
      number={3},
       pages={422\ndash 446},
         url={http://dx.doi.org/10.1007/s004400100151},
      review={\MR{1867429 (2002h:60110)}},
}

\bib{Bro}{article}{
      author={Brox, Th.},
       title={A one-dimensional diffusion process in a {W}iener medium},
        date={1986},
        ISSN={0091-1798},
     journal={Ann. Probab.},
      volume={14},
      number={4},
       pages={1206\ndash 1218},
  url={http://links.jstor.org/sici?sici=0091-1798(198610)14:4<1206:AODPIA>2.0.CO;2-F&origin=MSN},
      review={\MR{866343 (88f:60132)}},
}

\bib{Di}{article}{
      author={Diel, Roland},
       title={Almost sure asymptotics for the local time of a diffusion in
  {B}rownian environment},
        date={2011},
        ISSN={0304-4149},
     journal={Stochastic Process. Appl.},
      volume={121},
      number={10},
       pages={2303\ndash 2330},
         url={http://dx.doi.org/10.1016/j.spa.2011.06.002},
      review={\MR{2822778}},
}

\bib{rf1}{article}{
      author={Flandoli, Franco},
      author={Russo, Francesco},
      author={Wolf, Jochen},
       title={Some {SDE}s with distributional drift. {I}. {G}eneral calculus},
        date={2003},
        ISSN={0030-6126},
     journal={Osaka J. Math.},
      volume={40},
      number={2},
       pages={493\ndash 542},
         url={http://projecteuclid.org/euclid.ojm/1153493096},
      review={\MR{1988703 (2004e:60110)}},
}

\bib{rf2}{article}{
      author={Flandoli, Franco},
      author={Russo, Francesco},
      author={Wolf, Jochen},
       title={Some {SDE}s with distributional drift. {II}. {L}yons-{Z}heng
  structure, {I}t\^o's formula and semimartingale characterization},
        date={2004},
        ISSN={0926-6364},
     journal={Random Oper. Stochastic Equations},
      volume={12},
      number={2},
       pages={145\ndash 184},
         url={http://dx.doi.org/10.1163/156939704323074700},
      review={\MR{2065168 (2006a:60105)}},
}

\bib{garsiarodemich}{article}{
      author={Garsia, A.~M.},
      author={Rodemich, E.},
      author={Rumsey, H., Jr.},
       title={A real variable lemma and the continuity of paths of some
  {G}aussian processes},
        date={1970/1971},
        ISSN={0022-2518},
     journal={Indiana Univ. Math. J.},
      volume={20},
       pages={565\ndash 578},
      review={\MR{0267632 (42 \#2534)}},
}

\bib{gr}{book}{
      author={Gradshteyn, I.~S.},
      author={Ryzhik, I.~M.},
       title={Table of integrals, series, and products},
     edition={Sixth},
   publisher={Academic Press, Inc., San Diego, CA},
        date={2000},
        ISBN={0-12-294757-6},
        note={Translated from the Russian, Translation edited and with a
  preface by Alan Jeffrey and Daniel Zwillinger},
      review={\MR{1773820 (2001c:00002)}},
}

\bib{HKPS}{book}{
      author={Hida, Takeyuki},
      author={Kuo, Hui-Hsiung},
      author={Potthoff, J{\"u}rgen},
      author={Streit, Ludwig},
       title={White noise},
      series={Mathematics and its Applications},
   publisher={Kluwer Academic Publishers Group, Dordrecht},
        date={1993},
      volume={253},
        ISBN={0-7923-2233-9},
        note={An infinite-dimensional calculus},
      review={\MR{1244577 (95f:60046)}},
}

\bib{hl}{article}{
      author={Hu, Yaozhong},
      author={Le, Khoa},
       title={A multiparameter {G}arsia-{R}odemich-{R}umsey inequality and some
  applications},
        date={2013},
        ISSN={0304-4149},
     journal={Stochastic Process. Appl.},
      volume={123},
      number={9},
       pages={3359\ndash 3377},
         url={http://dx.doi.org/10.1016/j.spa.2013.04.019},
      review={\MR{3071383}},
}

\bib{iw}{book}{
      author={Ikeda, Nobuyuki},
      author={Watanabe, Shinzo},
       title={Stochastic differential equations and diffusion processes},
     edition={Second},
      series={North-Holland Mathematical Library},
   publisher={North-Holland Publishing Co., Amsterdam; Kodansha, Ltd., Tokyo},
        date={1989},
      volume={24},
        ISBN={0-444-87378-3},
      review={\MR{1011252 (90m:60069)}},
}

\bib{jacod}{book}{
      author={Jacod, Jean},
       title={Calcul stochastique et probl\`emes de martingales},
      series={Lecture Notes in Mathematics},
   publisher={Springer, Berlin},
        date={1979},
      volume={714},
        ISBN={3-540-09253-6},
      review={\MR{542115 (81e:60053)}},
}

\bib{kobay}{article}{
      author={Kobayashi, Kei},
       title={Stochastic calculus for a time-changed semimartingale and the
  associated stochastic differential equations},
        date={2011},
        ISSN={0894-9840},
     journal={J. Theoret. Probab.},
      volume={24},
      number={3},
       pages={789\ndash 820},
         url={http://dx.doi.org/10.1007/s10959-010-0320-9},
      review={\MR{2822482 (2012i:60109)}},
}

\bib{marcus}{book}{
      author={Marcus, Michael~B.},
      author={Rosen, Jay},
       title={Markov processes, {G}aussian processes, and local times},
      series={Cambridge Studies in Advanced Mathematics},
   publisher={Cambridge University Press, Cambridge},
        date={2006},
      volume={100},
        ISBN={978-0-521-86300-1; 0-521-86300-7},
         url={http://dx.doi.org/10.1017/CBO9780511617997},
      review={\MR{2250510 (2008b:60001)}},
}

\bib{matyor2}{article}{
      author={Matsumoto, Hiroyuki},
      author={Yor, Marc},
       title={Exponential functionals of {B}rownian motion. {II}. {S}ome
  related diffusion processes},
        date={2005},
        ISSN={1549-5787},
     journal={Probab. Surv.},
      volume={2},
       pages={348\ndash 384},
         url={http://dx.doi.org/10.1214/154957805100000168},
      review={\MR{2203676 (2006m:60117)}},
}

\bib{np}{article}{
      author={Nualart, D.},
      author={Pardoux, {\'E}.},
       title={Stochastic calculus with anticipating integrands},
        date={1988},
        ISSN={0178-8051},
     journal={Probab. Theory Related Fields},
      volume={78},
      number={4},
       pages={535\ndash 581},
         url={http://dx.doi.org/10.1007/BF00353876},
      review={\MR{950346 (89h:60089)}},
}

\bib{perkins}{article}{
      author={Perkins, Edwin},
       title={Local time is a semimartingale},
        date={1982},
        ISSN={0044-3719},
     journal={Z. Wahrsch. Verw. Gebiete},
      volume={60},
      number={1},
       pages={79\ndash 117},
         url={http://dx.doi.org/10.1007/BF01957098},
      review={\MR{661760 (84e:60117)}},
}

\bib{russo-trutnau}{article}{
      author={Russo, Francesco},
      author={Trutnau, Gerald},
       title={Some parabolic {PDE}s whose drift is an irregular random noise in
  space},
        date={2007},
        ISSN={0091-1798},
     journal={Ann. Probab.},
      volume={35},
      number={6},
       pages={2213\ndash 2262},
         url={http://dx.doi.org/10.1214/009117906000001178},
      review={\MR{2353387 (2008j:60153)}},
}

\bib{Sch}{incollection}{
      author={Schumacher, Scott},
       title={Diffusions with random coefficients},
        date={1985},
   booktitle={Particle systems, random media and large deviations ({B}runswick,
  {M}aine, 1984)},
      series={Contemp. Math.},
      volume={41},
   publisher={Amer. Math. Soc., Providence, RI},
       pages={351\ndash 356},
         url={http://dx.doi.org/10.1090/conm/041/814724},
      review={\MR{814724 (88k:60045)}},
}

\bib{Shi}{article}{
      author={Shi, Zhan},
       title={A local time curiosity in random environment},
        date={1998},
        ISSN={0304-4149},
     journal={Stochastic Process. Appl.},
      volume={76},
      number={2},
       pages={231\ndash 250},
         url={http://dx.doi.org/10.1016/S0304-4149(98)00036-2},
      review={\MR{1642673 (99i:60176)}},
}

\bib{Sinai}{article}{
      author={Sina{\u\i}, Ya.~G.},
       title={The limit behavior of a one-dimensional random walk in a random
  environment},
        date={1982},
        ISSN={0040-361X},
     journal={Teor. Veroyatnost. i Primenen.},
      volume={27},
      number={2},
       pages={247\ndash 258},
      review={\MR{657919 (83k:60078)}},
}

\end{biblist}
\end{bibdiv}


\end{document}